\theoremstyle{plain}
\title[Ihara's lemma: the limite case]{Ihara's Lemma for $\GL_d$: the limit case}
\author{Pascal Boyer}
\email{boyer@math.univ-paris13.fr}
\address{Universit\'e Paris 13, Sorbonne Paris Nord \\
LAGA, CNRS, UMR 7539\\ 
F-93430, Villetaneuse (France) \\
Coloss: ANR-19-PRC}
\begin{document}

\setcounter{tocdepth}{3}
\subjclass{11F70, 11F80, 11F85, 11G18, 20C08}
\keywords{Ihara's lemma, Shimura varieties, torsion in the cohomology, galois representations}

\maketitle

\begin{abstract}
Clozel, Harris and Taylor proposed in \cite{CHT} conjectural 
generalizations of the
classical Ihara's lemma for $\GL_2$, to higher dimensional similitude groups.
We prove these conjectures in the so called \emph{limit case}, which
after base change is the essential one, under any hypothesis allowing
level raising as for example theorem 5.1.5 in \cite{gee-annalen}. 
\end{abstract}

\maketitle

\tableofcontents


\section{Introduction}

\subsection{Ihara's lemma: origin and proofs}

In the Taylor-Wiles method Ihara's lemma is the key ingredient to extend
a $R=T$ property from the minimal case to a non minimal one. It is usually
formulated by the injectivity of some map as follows. 

Let 
$\Gamma=\Gamma_0(N)$ be the usual congruence subgroup of
$SL_2(\Zm)$ for some $N>1$, and for a prime $p$ not dividing $N$
let $\Gamma':=\Gamma \cap \Gamma_0(p)$. We then have two degeneracy maps
$$\pi_1,\pi_2: X_{\Gamma'} \longrightarrow X_\Gamma$$
between the compactified modular curves of levels $\Gamma'$ and $\Gamma$
respectively, induced by the inclusion
$$\Gamma' \hookrightarrow \Gamma \hbox{ and }
\left ( \begin{array}{cc} p & 0 \\ 0 & 1 \end{array} \right )
\Gamma' \left ( \begin{array}{cc} p & 0 \\ 0 & 1 \end{array} \right )^{-1}
\hookrightarrow \Gamma.$$
For $l \neq p$, we then have a map
$$\pi^*:=\pi_1^*+\pi_2^*: H^1(X_\Gamma,\Fm_l)^2 \longrightarrow 
H^1(X_{\Gamma'},\Fm_l).$$

\begin{thm}
Let $\mathfrak m$ be a maximal ideal of the Hecke algebra acting on these
cohomology groups which is non Eisenstein, i.e. that corresponds to an
irreducible Galois representation. Then after localizing at $\mathfrak m$,
the map $\pi^*$ is injective.
\end{thm}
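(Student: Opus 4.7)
The plan is to reformulate the statement in group cohomology and then appeal to the action of an $S$-arithmetic group on the Bruhat-Tits tree of $\PGL_2(\Qm_p)$, which is the classical approach of Ihara and Ribet.

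After localizing at the non-Eisenstein ideal $\mathfrak{m}$, the boundary contribution coming from the cusps vanishes, so one can identify, for $l$ prime to the relevant orders of elliptic stabilizers, $H^1(X_\Gamma,\Fm_l)_\mathfrak{m} \cong H^1(\Gamma,\Fm_l)_\mathfrak{m}$ and similarly at level $\Gamma'$. Under these identifications, the two components of $\pi^*$ become the restriction $\Res^\Gamma_{\Gamma'}$ and $\Res^\Gamma_{\Gamma'} \circ \Ad(w)^*$, where $w = \smatr{p}{0}{0}{1}$.

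I would then introduce the $S$-arithmetic group $\Delta \subset \GL_2(\Zm[1/p])$ consisting of matrices satisfying the $\Gamma_0(N)$ condition away from $p$. By Serre's theorem on groups acting on trees, $\Delta$ acts on the Bruhat-Tits tree of $\PGL_2(\Qm_p)$ with exactly two orbits of vertices, both with stabilizer conjugate in $\GL_2(\Qm)$ to $\Gamma$, and one orbit of edges with stabilizer conjugate to $\Gamma'$. The resulting amalgamated-product description $\Delta \cong \Gamma *_{\Gamma'} w\Gamma w^{-1}$ yields the Mayer-Vietoris long exact sequence
\begin{equation*}
\cdots \to H^0(\Gamma',\Fm_l) \to H^1(\Delta,\Fm_l) \to H^1(\Gamma,\Fm_l)^2 \xrightarrow{\pi^*} H^1(\Gamma',\Fm_l) \to \cdots,
\end{equation*}
so that $\ker(\pi^*)_\mathfrak{m}$ is a Hecke-equivariant quotient of $H^1(\Delta,\Fm_l)_\mathfrak{m}$. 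The theorem therefore reduces to showing that $H^1(\Delta,\Fm_l)_\mathfrak{m}$ vanishes.

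This last vanishing is the main obstacle. Heuristically, by strong approximation $\Delta$ is dense in $\GL_2(\Qm_p)$ modulo the determinant, and any non-zero eigenclass in $H^1(\Delta,\Fm_l)_\mathfrak{m}$ should produce a Hecke eigensystem corresponding, after lifting to characteristic zero, to a cuspidal automorphic representation $\pi$ of $\GL_2(\Am)$ attached to $\bar\rho_\mathfrak{m}$ whose local component $\pi_p$ admits a vector invariant under a dense subgroup of $\GL_2(\Qm_p)$. Irreducibility of $\pi_p$ then forces it to factor through the determinant, imposing the Eisenstein congruence $T_p \equiv 1 + p\langle p\rangle \pmod{\mathfrak{m}}$ and contradicting the non-Eisenstein hypothesis on $\mathfrak{m}$. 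Making this density/lifting argument work integrally with $\Fm_l$-coefficients is the delicate point; it is precisely here that a level-raising input such as theorem 5.1.5 of \cite{gee-annalen} enters to produce the required characteristic-zero lift with prescribed local component at $p$.
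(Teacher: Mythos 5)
Your reduction to the vanishing of $H^1(\Delta,\Fm_l)_{\mathfrak m}$ via the Bruhat--Tits tree, the amalgam $\Delta\simeq \Gamma *_{\Gamma'} w\Gamma w^{-1}$ and Mayer--Vietoris is exactly the Ihara--Ribet framework and is correct (the paper itself does not reprove the classical theorem; it only recalls it and sketches the analogous argument for a definite quaternion algebra via strong approximation, so there is no in-paper proof to match against).

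The final step, however, has a genuine gap and, as written, cannot be repaired along the lines you suggest. You propose to take a nonzero eigenclass in $H^1(\Delta,\Fm_l)_{\mathfrak m}$, lift it to a characteristic-zero cuspidal automorphic representation, and then invoke irreducibility of the local component to derive an Eisenstein congruence; and you point to theorem 5.1.5 of \cite{gee-annalen} to produce the lift. This is the wrong tool: level raising takes a characteristic-zero automorphic form at one level and produces another characteristic-zero form at a higher level with the same residual Galois representation. It does not lift a torsion class in $H^1(\Delta,\Fm_l)$ (or, equivalently, a class in $H^1(\Delta,\Zm_l)[\ell]$) to a characteristic-zero Hecke eigenform — in general no such lift exists, and whether it does is essentially the content of Ihara's lemma itself. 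Worse, the implication is circular: level raising for $\GL_2$ over $\Qm$ was historically proved \emph{using} Ihara's lemma, and the whole point of the present paper (stated explicitly at the end of the introduction) is that ``level raising implies Ihara's lemma'' is a nontrivial theorem in higher rank, proved there by quite different means. Invoking it to reprove the $\GL_2$ base case would beg the question.

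The correct ending of the argument is elementary and stays entirely in $\Fm_l$-coefficients. Set $\Delta_1:=\Delta\cap\SL_2(\Zm[1/p])$. By the Mennicke--Serre congruence subgroup property for $\SL_2(\Zm[1/p])$ (valid because $\Zm[1/p]$ has infinitely many units), every finite-index subgroup of $\Delta_1$ is a congruence subgroup, and Serre's computation of the abelianization of such $S$-arithmetic groups shows that $\Delta_1^{\mathrm{ab}}$ is finite with order supported on $\{2,3\}$ and the primes dividing $N$. Hence for $l$ as in the statement one has $\hom(\Delta_1,\Fm_l)=0$, so every class in $H^1(\Delta,\Fm_l)$ restricts trivially to $\Delta_1$ and therefore is inflated from $\hom(\Delta/\Delta_1,\Fm_l)$. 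Since $\Delta/\Delta_1$ injects into $(\Zm[1/p])^{\times}$ via the determinant, such a class is a determinant character, i.e.\ Eisenstein, and dies after localizing at the non-Eisenstein $\mathfrak m$. No lifting to characteristic zero, and in particular no level-raising input, is needed.
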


Diamond and Taylor in \cite{D-T} proved an analogue of Ihara's lemma for
Shimura curves over $\Qm$. For a general totally real number field $F$ with
ring of integers $\mathcal{O}_F$,
Manning and Shotton in \cite{ShMa} succeeded to prove it under
some large image hypothesis. Their strategy is entirely different from those of
\cite{D-T}
but consists roughly 
\begin{itemize}
\item to carry Ihara's lemma for a compact Shimura curve $Y_{\bar K}$ 
associated to a definite quaternion
algebra $\overline D$ ramified at some auxiliary place $v$ of $F$,
in level $\bar K=\bar K^v \bar K_v$ an open compact subgroup of 
$D \otimes \Am_{F,f}$ unramified at $v$,

\item to the indefinite situation $X_K$ relatively to a quaternion division
algebra $D$ ramified at all but one infinite place of $F$, and isomorphic
to $\bar D$ at all finite places of $F$ different to $v$, and with level
$K$ agreing with $\bar K^v$ away from $v$.
\end{itemize}

Indeed in the definite case Ihara's statement is formulated by the injectivity of
$$\pi^*=\pi_1^*+\pi_2^*: H^0(Y_{\bar K},\Fm_l)_{\mathfrak m} \oplus
H^0(Y_{\bar K},\Fm_l)_{\mathfrak m} \longrightarrow 
H^0(Y_{\bar K_0(v)},\Fm_l)_{\mathfrak m}$$
where both $\overline D$ and $\bar K$ are unramified at the place $v$
and $\bar K_0(v)_v$ is the subgroup of $\GL_2(F_v)$ of elements which
are upper triangular modulo $p$.

The proof goes like this, cf. \cite{ShMa} theorem 6.8.
Suppose $(f,g) \in \ker \pi^*$. Regarding $f$ and $g$
as $K^v$-invariant function on $\overline G(F) \backslash \overline G(\Am_{F,f})$,
then $f(x)=-g(x \omega)$ where 
$\omega=\left ( \begin{array}{cc} \varpi_v & 0 \\ 0 & 1 \end{array} \right )$,
$\varpi_v$ being an uniformizer for $F_w$ and $\overline G$ being
the algebraic group over $\mathcal{O}_F$ associated to 
$\mathcal{O}_{\overline D}^\times$
the inversible group of the maximal order $\mathcal{O}_{\overline D}$ of 
$\overline D$:
note that $\overline G(F_v) \simeq \GL_2(F_v)$.
Then $f$ is invariant under $K^v$ and $\omega^{-1} K^v \omega$ so that,
using the strong approximation theorem for the subgroup of $\overline G$
of elements of reduced norm $1$, then $f$ factors through the reduced norm map,
and so is supported on Eisenstein maximal ideals.

The link between $X_K$ and $Y_{K^v}$ is given by the geometry of the
integral model of the Shimura curve $X_{K_0(v)}$ with
$\Gamma_0(v)$-level structure. The main new ingredient of \cite{ShMa} to
carry this geometric link to Ihara's lemma goes through the patching technology
which allows to obtain maximal Cohen-Macaulay modules over deformation rings. 
Using a flatness property and Nakayama's lemma, there are then able 
to extend a surjective property, dual to the injectivity in the Ihara's lemma, from
the maximal unipotent locus on the deformation space to the whole space,
and recover the Ihara's statement reducing by the maximal ideal of the
deformation ring. 
%

\subsection{Generalisations of Ihara's Lemma}

To generalize the classical Ihara's lemma for $\GL_d$, there are essentially
two approaches. 

\textit{The first natural one} developed by Clozel, Harris and Taylor in
their first proof of the Sato-Tate theorem \cite{CHT}, 
focuses on the $H^0$ with coefficients
in $\Fm_l$ of a zero dimensional Shimura variety associated to higher dimensional
definite division algebras. More precisely consider a totally real field
$F^+$ and a imaginary quadratic extension $E/\Qm$ and define $F=F^+E$.
We then consider $\overline G/\Qm$ an unitary group with $\overline G(\Qm)$ 
compact so that
$\overline G$ becomes an inner form of $\GL_d$ over $F$. This means, cf.
\S \ref{subsec-KHT}, we have fixed a division algebra $\overline B$ with center $F$,
of dimension $d^2$, provided with an involution of the second kind such that its
restriction to $F$ is the complex conjugation. 
We moreover suppose that at every place $w$ of $F$,
either $\overline B_w$ is split or a local division algebra.

Let $v$ (resp. $v_1$) be a place of $F$ above a prime number $p$ (resp. $p_1 \neq p$) split in $E$ and
such that $\overline B_v^\times \simeq \GL_d(F_v)$ (resp. $\overline B_{v_1}$ is a central 
division algebra over $F_{v_1}$)
where $F_v$ is the associated local field with ring of integers $\mathcal{O}_{v}$
and residue field $\kappa(v)$. 

\begin{nota} Let $q_v$ be the order of the residue field $\kappa(v)$.
\end{nota}

Consider then an open compact 
subgroup $\overline K^v$ infinite at $v$ in the following sense: $\overline G(\Qm_p)
\simeq \Qm_p^\times \times \prod_{v_i^+} \overline B_{v_i^+}^{op,\times}$ where 
$p= \prod_i v_i^+ $ in $F^+$ and we identify places of $F^+$ over $p=uu^c \in E$ 
with places of $F$ over $u$.
We then ask $\overline K^v_p=\Zm_p^\times \times \prod_{w|u} \overline K_w$ 
to be such 
that $\overline K_v$ is restricted to the identity element.

The associated Shimura variety with level $\overline K=\overline K^v \overline K_v$
for some finite level $\overline K_v$ at $v$, denoted by 
$\overline \Sh_{\overline K}$,
is then such that its $\Cm$-points are $\overline G(\Qm) \backslash 
\overline G(\Am_{\Qm}^\oo)/ \overline K$
and for $l \neq p,p_1$, its $H^0$ with coefficients in 
$\overline \Fm_l$ is then identified with the space 
$$S_{\overline G}(\overline K,\overline \Fm_l)= \{ f:
\overline G(\Qm) \backslash 
\overline G(\Am_{\Qm}^\oo)/ \overline K \longrightarrow \overline \Fm_l 
\hbox{ locally constant} \}.$$
Replacing $\overline K$ by $\overline K^v$, we then obtain
an admissible smooth representation of $\GL_d(F_v)$ equipped with an action
of the Hecke algebra $\Tm(\overline K^v)$
defined as the image of the abstract unramified Hecke algebra, cf. definition
\ref{defi-Tm}, inside $\End ( S_{\overline G}(\overline K^v,\overline \Fm_l) \Bigr )$.

To a maximal ideal $\mathfrak m$ of $\Tm(\overline K^v)$ is associated a
Galois $\overline \Fm_l$-representation $\overline \rho_{\mathfrak m}$, cf.
\S \ref{sec-hecke}.
We consider the case where this representation is irreducible.
Note in particular that such an $\mathfrak m$ is then non pseudo-Eisenstein
in the usual terminology. 

\begin{conj} \label{conj-ihara1} (cf. conjecture B in \cite{CHT}) \\
Any irreducible $\GL_d(F_v)$-submodule of
$S_{\overline G}(\overline K^v,\overline \Fm_l)_{\mathfrak m}$ is generic.
\end{conj}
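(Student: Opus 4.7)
The plan is to argue by contradiction: assume $\pi_v \hookrightarrow S_{\overline G}(\overline K^v,\overline\Fm_l)_{\mathfrak m}$ is an irreducible non-generic $\GL_d(F_v)$-submodule, and derive a contradiction from the level-raising hypothesis.

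First, I would invoke the Vignéras classification of irreducible smooth $\overline\Fm_l$-representations of $\GL_d(F_v)$: any such $\pi_v$ is determined by a multisegment, and non-genericity is equivalent to the presence of linked segments. The term \emph{limit case} in the title refers to the extremal situation where $\pi_v$ is a generalized Speh representation $\speh_s(\varrho)$, the irreducible socle of the parabolic induction of a single cuspidal segment of length $s \mid d$ built from a supercuspidal $\varrho$. The general non-generic case should reduce to this one by a standard induction on multisegments using Bernstein--Zelevinsky theory and transitivity of induction, so one may assume from the start that the would-be submodule is of Speh type.

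Second, I would translate the injection in cohomology into a geometric statement on a KHT-type Shimura variety: the cohomology space $S_{\overline G}(\overline K^v,\overline\Fm_l)_{\mathfrak m}$ is related, via base change and Jacquet--Langlands (trivial at the $H^0$ level for a definite form), to the nearby cycles on the special fiber of a PEL Shimura variety with Iwahori-type level at $v$. That cohomology carries the Harris--Taylor filtration indexed by the Newton stratification; its graded pieces are explicitly parabolic inductions of Speh representations tensored with cohomology of Igusa varieties, and an irreducible non-generic submodule of Speh type can only be fed by the deepest (supersingular) stratum of this filtration.

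Third, at that deepest stratum, the level-raising hypothesis, as formulated in Theorem 5.1.5 of \cite{gee-annalen}, produces a characteristic-zero automorphic lift of $\mathfrak m$ whose local component at $v$ is generic. Combined with local-global compatibility for $\overline\rho_{\mathfrak m}\big|_{G_{F_v}}$ and a Nakayama-style freeness of the localized cohomology over an appropriate unramified deformation ring (in the spirit of the patching used in \cite{ShMa}), this incoming generic class forces the hypothetical $\speh_s(\varrho)$-submodule to vanish, giving the desired contradiction.

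The main obstacle, which is precisely what the adjective \emph{limit} points to, lies in the third step: reconciling the Harris--Taylor combinatorics at the supersingular stratum with the Galois-theoretic data coming from level raising, in a setting where mod-$\ell$ non-semisimplicity can occur and where there is no intermediate Newton stratum available to mediate an inductive argument. The extremal case must therefore be handled directly, controlling explicitly the socle of the graded pieces of the Harris--Taylor filtration rather than deforming it from a less degenerate situation.
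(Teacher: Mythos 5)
Your proposal has several genuine gaps and one outright misreading that together make the argument unviable as stated.

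First, you misidentify the meaning of ``limit case.'' In the paper this refers to the quasi-banal, non-banal arithmetic situation $q_v \equiv 1 \bmod \ell$ with $\ell > d$ (after base change this is the essential case), not to the hypothetical submodule being a generalized Speh representation. This is not a terminological quibble: in the limit case the mod-$\ell$ reductions of the relevant parabolic inductions become highly non-semisimple, their socles are no longer irreducible, and \emph{every} graded piece of the Newton filtration can contribute non-generic submodules --- not only the deepest stratum as you claim. Your proposed reduction to Speh type ``by induction on multisegments'' also does not match the paper's decomposition, which is by supercuspidal support $\varrho$ followed by a delicate induction on partitions of $s_\varrho$ carried out through an explicit ``exchange'' process between sheaf-theoretic and automorphic filtrations.

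Second, and more seriously, your step three invokes patching and ``Nakayama-style freeness of the localized cohomology over an appropriate unramified deformation ring,'' in the spirit of \cite{ShMa}. The paper explicitly rules this out: the Manning--Shotton (and Caraiani--Tamiozzo) strategy depends on the relevant Galois deformation rings being regular, which fails for $\GL_d$ with $d > 2$. The paper's actual mechanism is different and does not appear in your proposal: it constructs a $\overline{\Zm}_\ell$-integral geometric monodromy operator $N_v$ whose mod-$\ell$ reduction $\overline{N}_v$ is nontrivial on cohomology even though the arithmetic monodromy on $\overline{\rho}_{\mathfrak m}$ is trivial; it proves a uniformity-of-lattices result (Proposition~\ref{prop-BLC}) saying the lattice cut out of $\Pi_v$ by cohomology depends only on the mod-$\ell$ reduction of the Hecke eigensystem; and it then uses level raising (Theorem~\ref{thm-raising}) together with $\overline{N}_v$ to show, step by step through the exchange process, that these lattices have generic socles. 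Level raising in the paper is used to move between nearby cuspidal supports and compare $K_v$-type multiplicities --- not, as you suggest, to feed a generic class into a deformation-theoretic freeness argument that would directly kill a Speh submodule. You also gloss over the need for an auxiliary place $v_0$ where $\overline{B}_{v_0}$ is a division algebra, which is how the paper transfers the $H^0$ of the definite group to the supersingular locus of a KHT variety; this is a different place from $v$, whereas your proposal conflates the two.
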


For rank $2$ unitary groups, we recover the previous statement
as the characters are exactly those representations 
which do not have a Whittaker model, i.e.
are the non generic ones. For $d \geq 2$, over $\overline \Qm_l$, the generic
representations of $\GL_d(F_v)$ are the irreducible parabolically induced
representations $\st_{t_1}(\pi_{v,1}) \times \cdots \times \st_{t_r}(\pi_{v,r})$
where for $i=1,\cdots,r$, 
\begin{itemize}
\item $\pi_{v,i}$ is an irreducible cuspidal representation of $\GL_{g_i}(F_v)$,
\item $\st_{t_i}(\pi_{v,i})$ is a Steinberg representations, cf. definition \ref{defi-LT},
\item $\sum_{i=1}^r t_ig_i=d$ where
the Zelevinsky segments $[\pi_{v,i}\{ \frac{1-t_i}{2} \} ,\pi_{v,i} \{ 
\frac{t_i-1}{2} \} ]$are not linked in the sense of \cite{zelevinski2}. 
\end{itemize}

Over $\overline \Fm_l$
every irreducible generic representation is obtained as the unique generic
subquotient of the modulo $l$ reduction of a generic representation. It can also
be characterized intrinsically using representation of the mirabolic subgroup, cf.
\S \ref{para-gen}.


\begin{defin} \label{defi-ihara1} (cf. definition of \cite{CHT} 5.1.9) \\
An admissible smooth $\overline \Fm_l[\GL_d(F_v)]$-module $M$ is said to have
the weak Ihara property if for every 
$m \in  M^{\GL_d(\mathcal{O}_v)} $ which is an eigenvector of 
$\overline \Fm_l[\GL_d(\mathcal{O}_v) \backslash 
\GL_d(F_v)/\GL_d(\mathcal{O}_v)]$,
every irreducible submodule of the $\overline \Fm_l[\GL_d(F_v)]$-module 
generated by $m$, is generic.
\end{defin}

\rem If we ask 
$S_{\overline G}(\overline K^v,\overline \Fm_l)_{\mathfrak m}$ to verify
the weak Ihara property, then it should
have non trivial unramified vectors so that the supercuspidal support
of the restriction $\overline \rho_{\mathfrak m,v}$ of 
$\overline \rho_{\mathfrak m}$ to the decomposition subgroup at $v$, is
made of unramified characters.

\medskip

\textit{The second approach} asks to find a map playing the same role as
$\pi^*=\pi_1^*+\pi_2^*$. It is explained in section 5.1 of \cite{CHT} with
the help of the element
$$\theta_v \in \Zm_l [K_1(v^n) \backslash \GL_d(F_v)/\GL_d(\mathcal{O}_{F_v})]$$
constructed by Russ Mann, cf. proposition 5.1.7 of \cite{CHT},
where $F_v$ is here a finite extension of $\Qm_p$ with ring of integers 
$\mathcal{O}_v$.

\begin{defin} \label{defi-ihara2}
An admissible smooth $\overline \Fm_l[\GL_d(F_v)]$-module $M$ is said to have the
almost Ihara property if 
$\theta_v: M^{\GL_d(\mathcal{O}_v)} \longrightarrow M$
is injective.
\end{defin}

Recall that $l$ is called quasi-banal for $\GL_d(F_v)$ if either
$l \nmid \sharp \GL_d(\kappa_v)$ (the banal case) or $l>d$ and
$q_v \equiv 1 \mod l$ (the limit case).

\begin{prop} (cf. \cite{CHT} lemma 5.1.10) \\
Suppose that $l$ is quasi-banal and $M$ is a $\overline \Fm_l[\GL_d(F_v)]$-module
verifying the Ihara property. If 
$\ker (\theta_v: M^{\GL_d(\mathcal{O}_v)} \longrightarrow M)$
is a $\Fm_l[\GL_d(\mathcal{O}_{F_v}) \backslash \GL_d(F_v)/
\GL_d(\mathcal{O}_{F_v})]$-module, then $M$ has the almost Ihara property.
\end{prop}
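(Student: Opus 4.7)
The proof is by contradiction: suppose there exists $0\neq m\in M^{\GL_d(\OC_v)}$ with $\theta_v(m)=0$. By hypothesis $\ker\theta_v$ is a module over the spherical Hecke algebra $\HC_v:=\overline \Fm_l[\GL_d(\OC_v)\backslash\GL_d(F_v)/\GL_d(\OC_v)]$, so after passing to a generalized eigenspace I may assume $m$ is an $\HC_v$-eigenvector with some system of eigenvalues $\chi$, still lying in $\ker\theta_v$.

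Form the cyclic $\GL_d(F_v)$-submodule $N:=\overline \Fm_l[\GL_d(F_v)]\cdot m\subset M$. The weak Ihara property (Definition \ref{defi-ihara1}) applied to $m$ asserts that every irreducible $\GL_d(F_v)$-submodule of $N$ is generic. Mann's element $\theta_v$ was constructed (proposition 5.1.7 of \cite{CHT}) so as to be nonzero on the spherical vector of any irreducible generic unramified $\overline \Qm_l$-representation of $\GL_d(F_v)$; the quasi-banal hypothesis on $l$ guarantees both that this nonvanishing survives modulo-$l$ reduction and that the spherical part of any irreducible generic unramified $\overline \Fm_l$-representation remains one-dimensional. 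Consequently, for any irreducible generic unramified $\pi$, the map $\theta_v\colon\pi^{\GL_d(\OC_v)}\to\pi$ is injective.

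What is left is to show that $m$ has nonzero image in $\pi^{\GL_d(\OC_v)}$ for at least one irreducible submodule $\pi\subset N$: this would force $\theta_v(m)\neq 0$ and contradict our choice of $m$. For this one argues that $N^{\GL_d(\OC_v)}$ is a cyclic $\HC_v$-module generated by $m$ (since $N$ is cyclic over $\GL_d(F_v)$ generated by a spherical vector), so that $m$ projects nontrivially onto the spherical part of the socle of $N$, which by weak Ihara is a sum of generic irreducibles. Selecting a generic irreducible summand $\pi$ into which $m$ projects nontrivially yields the contradiction.

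The main obstacle is this last step, namely ensuring that the spherical-vector functor detects the socle of the cyclic module $N$ in characteristic $l$; this is precisely where the quasi-banal hypothesis on $l$ is essential, both to guarantee the survival of Mann's nonvanishing criterion after $\overline\Fm_l$-reduction and to retain enough semisimplicity of the $\HC_v$-action on the spherical lines of the generic irreducible constituents of $N$ for the Hecke eigensystem $\chi$ of $m$ to match that of some submodule $\pi$.
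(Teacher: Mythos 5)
The paper states this proposition only as a citation of CHT, lemma 5.1.10, and does not reproduce a proof, so there is no in-paper argument to compare against; I will evaluate your attempt on its own merits. Your first two reductions are sound: admissibility of $M$ makes $M^{\GL_d(\OC_v)}$ finite-dimensional, and the hypothesis that $\ker\theta_v$ is a module over the commutative spherical Hecke algebra $\HC_v$ over the algebraically closed field $\overline{\Fm}_l$ lets you choose a simultaneous $\HC_v$-eigenvector $m\in\ker\theta_v$; applying the weak Ihara property of Definition \ref{defi-ihara1} to $m$ then says that the socle of $N:=\overline{\Fm}_l[\GL_d(F_v)]\cdot m$ is a sum of generic irreducibles.

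The gap is in your last step, where you assert that ``$m$ projects nontrivially onto the spherical part of the socle of $N$''. The socle $S:=\mathrm{soc}(N)$ is a \emph{sub}module of $N$, so there is no projection $N\twoheadrightarrow S$; and $m$ will in general lie \emph{outside} $S$ — indeed if $m\in S$ then $N=\overline{\Fm}_l[\GL_d(F_v)]\cdot m\subset S$ would be semisimple and the statement easy, whereas the whole content of the proposition is the case where $N$ has nontrivial radical. The correct observation that $N^{\GL_d(\OC_v)}=\HC_v\cdot m$ is cyclic does not help: it says nothing about a decomposition of $m$ along the socle filtration, and there is no reason for $m$ to have a nonzero component in $S$. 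Worse, the generic irreducible summands of $S$ need not be unramified, in which case $S^{\GL_d(\OC_v)}=0$ and the entire device of detecting $m$ through spherical vectors of the socle has nothing to act on. What has to be supplied is a mechanism relating $\theta_v(m)$ (a vector in $M^{K_1(v^n)}$, not in $M^{\GL_d(\OC_v)}$) to the generic socle even when $m$ sits in a higher Loewy layer of $N$: this is precisely what Mann's element is engineered to do via Whittaker coefficients and derivative functors, and the quasi-banal hypothesis on $l$ is used to control how those constructions behave under reduction modulo $l$ — not to make the naive spherical-vector argument work. You flag this step yourself as ``the main obstacle'', but the proposal does not actually close it, so the proof is incomplete at exactly the point where the statement is nontrivial.
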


\noindent \textbf{Applications}: the generalizations of the classical Ihara's lemma
were introduced in \cite{CHT} to prove a non minimal $R=\mathbb T$ theorem.
The weaker statement $R^{red}=\mathbb T$ where $R^{red}$ is the reduced
quotient of $R$, was later obtained unconditionally
using Taylor's \emph{Ihara avoidance} method, cf. \cite{taylor-avoidance} which was
enough to prove the Sato-Tate conjecture. However, the full $R = \mathbb T$ 
theorem would have applications to special values of the adjoint $L$-function
and would imply that $R$ is a complete intersection. It should also be useful 
for generalizing the local-global compatibility results of \cite{emerton-local-global}.

In \cite{Moss-Ihara}, the author also proved that Ihara's property in the quasi-banal case
is equivalent to the following result. 

\begin{prop} (cf. \cite{Moss-Ihara} corollary 9.5) \\
Let $\mathfrak m$ be a non-Eisenstein
maximal ideal of $\Tm^S$ and 
$f \in S_{\overline G}(\overline K^v\GL_d(\OC_v),\overline \Fm_l)$.
Let $\Iw_v$ be the Iwahori subgroup of $\GL_d(\OC_v)$, then the
$\overline \Fm_l[\Iw_v \backslash \GL_d(F_v) / \GL_d(\OC_v)]$-submodule
of $S_{\overline G}(\overline K^v \Iw_v,\overline \Fm_l)$ generated by $f$
is of dimension $d!$.
\end{prop}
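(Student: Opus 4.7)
The plan is to reinterpret the Hecke submodule generated by $f$ as the space of Iwahori-invariants of a local $\GL_d(F_v)$-submodule, and then to compute its dimension by combining the structure of mod $l$ unramified principal series with Ihara's property.

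First I would set $V_f := \overline \Fm_l[\GL_d(F_v)]\cdot f \subset S_{\overline G}(\overline K^v,\overline \Fm_l)_{\mathfrak m}$ and identify the Hecke submodule generated by $f$ inside $S_{\overline G}(\overline K^v \Iw_v,\overline \Fm_l)$ with $V_f^{\Iw_v}$. This identification rests on the relation $e_{\Iw_v}\,\overline \Fm_l[\GL_d(F_v)]\,e_{\GL_d(\OC_v)} = \overline \Fm_l[\Iw_v\backslash \GL_d(F_v)/\GL_d(\OC_v)]$ together with $e_{\GL_d(\OC_v)}\cdot f = f$. In the limit case one must be careful, since the relative idempotent projecting $\GL_d(\OC_v)/\Iw_v$ onto the trivial representation is not defined in $\overline \Fm_l$, but this does not affect the identification because we do not need to average between Iwahori and maximal compact.

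For the upper bound, I would observe that the spherical Hecke algebra acts on $f$ through a character $\chi$ essentially given by the Satake parameter attached to the local representation $\rhobar_{\mathfrak m}$ at $v$. Every irreducible $\overline \Fm_l[\GL_d(F_v)]$-subquotient of $V_f$ therefore appears inside the mod $l$ unramified principal series $i_B^G(\chi)$. A computation of Vign\'eras in the quasi-banal setting gives $\dim i_B^G(\chi)^{\Iw_v}=d!$, so the upper bound $\dim V_f^{\Iw_v}\le d!$ follows once one confirms that $V_f$ sits inside a single copy of $i_B^G(\chi)$. Multiplicity-freeness of this kind is ensured by the non-Eisenstein hypothesis: the irreducibility of $\rhobar_{\mathfrak m}$ prevents several distinct automorphic components with the same Satake parameter at $v$ from contributing independently to $V_f$.

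For the matching lower bound, I would invoke Ihara's property, which asserts that every irreducible $\GL_d(F_v)$-submodule of $V_f$ is generic. In the quasi-banal case the generic constituent of $i_B^G(\chi)$ has Iwahori-invariants of full dimension $d!$, and since $f$ is a nonzero spherical vector it must project nontrivially onto this generic constituent, forcing $\dim V_f^{\Iw_v}\ge d!$. The principal obstacle is to make this last step rigorous in the limit case $q_v \equiv 1 \pmod l$, where the mod $l$ Iwahori-Hecke algebra degenerates toward $\overline \Fm_l[S_d]$ and the Jordan--H\"older structure of $i_B^G(\chi)$ becomes intricate; controlling which subquotients actually occur inside $V_f$ requires careful tracking via Zelevinsky segments mod $l$ combined with the non-Eisenstein hypothesis to rule out degenerate extensions that could shift the count away from $d!$.
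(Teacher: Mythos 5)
This proposition is only cited from \cite{Moss-Ihara} (corollary 9.5) in the paper, which supplies no internal proof; I therefore assess your proposal on its own terms rather than against an argument in the text.

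Your identification of the Hecke submodule generated by $f$ with $V_f^{\Iw_v}$, via $e_{\Iw_v}\,\overline\Fm_l[\GL_d(F_v)]\,e_{\GL_d(\OC_v)}\cdot f$, is correct. But the cleaner route to the upper bound is simply that $\overline\Fm_l[\Iw_v\backslash\GL_d(F_v)/\GL_d(\OC_v)]$ is free of rank $d!$ over the spherical Hecke algebra (extended affine Weyl group modulo finite Weyl group), so its image on any spherical eigenvector has dimension at most $d!$; the non-Eisenstein hypothesis plays no role here, and the detour through a putative embedding $V_f\hookrightarrow i_B^G(\chi)$ is unneeded and not justified — $V_f$ is naturally a quotient of the universal unramified module, not a subobject of a principal series, and $(\cdot)^{\Iw_v}$ is not exact in the limit case since $l\mid(q_v-1)$.

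The lower bound contains a genuine error. You assert that ``in the quasi-banal case the generic constituent of $i_B^G(\chi)$ has Iwahori-invariants of full dimension $d!$''. That is false. The whole principal series has $\Iw_v$-invariants of dimension $d!$, and under the finite Iwahori--Hecke algebra, which in the limit case $q_v\equiv 1\pmod l$ is isomorphic to $\overline\Fm_l[S_d]$, this $d!$-dimensional space is the regular representation; the generic constituent (the mod-$l$ Steinberg $\st_d$) contributes only the sign representation, of dimension $1$. Worse, in the limit case the mod-$l$ unramified principal series is \emph{semisimple} (as the paper recalls for $\varrho\times\cdots\times\varrho$), so the one-dimensional space of spherical vectors lies in a single constituent, and that constituent is the Speh/trivial one, not the Steinberg — so $f$ need not project nontrivially onto the generic constituent at all. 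Ihara's property controls irreducible \emph{submodules}, not quotients, and does not propagate to an Iwahori-invariant count by your argument. The actual content of Moss's corollary is controlling the kernel $K$ of $\mathcal M(\chi)\twoheadrightarrow V_f$, where $\mathcal M(\chi)$ is the universal unramified module (which, unlike $i_B^G(\chi)$, is \emph{not} semisimple in the limit case and has $\mathcal M(\chi)^{\Iw_v}\cong\overline\Fm_l[S_d]$ as a regular module); one must show $K^{\Iw_v}=0$, and that requires Moss's structural theorems on $\mathcal M(\chi)$ together with Ihara, not a dimension count on one constituent of a principal series.
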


\subsection{Main results}

With the previous notations, let $q_v$ be the order of the residue field of $F_v$.
We fix some prime number $l$ unramified in $F^+$ and split in $E$ and we place
ourself in the limit case where $q_v \equiv 1 \mod l$ with $l>d$, 
which is, after by base change, the crucial case to consider.

\begin{defin} (see the introduction of \cite{BLGGT}) \\
Let $K$ be a finite extension of $\Qm_l$.
A potentially crystalline representation $\rho$ of the absolute
Galois group of a $K$ is said to be potentially diagonalizable
if there is a finite extension $K_0$ of $K$ such that $\rho_{|G_{K_0}}$ lies on the same
irreducible component of the universal crystalline lifting ring of $\overline{\rho_{|G_{K_0}}}$
(with fixed Hodge-Tate numbers) as a sum of characters lifting $\overline{\rho_{|G_{K_0}}}$.
%
%
%
\end{defin}
%

\rem  Ordinary crystalline representations
are potentially diagonalizable, as are crystalline representations in the
Fontaine-Laffaille range (i.e. over an absolutely unramfied base and with Hodge-Tate numbers 
in the range $[0,l-2]$). Potential diagonalizability is also preserved
under restriction to the absolute Galois group of a finite extension.

\begin{thm} \label{thm-main}
\textbf{In the limit case}, suppose that there exists
a prime $p_0=u_0 \bar u_0$ split in $E$ with a place $v_0 | u_0$ of $F$ such
that $\overline B_{v_0}$ is a division algebra.
Consider $\mathfrak m$ such that
$$\overline \rho_{\mathfrak m}: G_F \longrightarrow \GL_d(\overline \Fm_l)$$
is an irreducible representation which is unramified at all places of $F$ 
lying above primes which do not split in $E$ and which satisfies the following
hypothesis:
\begin{itemize}
%


%
%
\item $\zeta_l \not \in F$;

\item $\overline \rho_{\mathfrak m,|G_{F(\zeta_l)}}$ is irreducible;

\item $l>2(d+1)$;

\item there exists a automorphic lifting of $\overline \rho_{\mathfrak m}$ such that for 
every $w~|~ l$, its restriction at $w$ is potentially diagonalizable and has distincts Hodge-Tate
numbers for every $\tau:F_w \hookrightarrow \overline \Qm_l$.
\end{itemize}
Then Ihara's lemma of the conjecture \ref{conj-ihara1} is true, i.e. 
every irreducible $\GL_d(F_v)$-submodule of 
$S_{\overline G}(\overline K^v,\overline \Fm_l)_{\mathfrak m}$ is generic.
\end{thm}

\begin{itemize}
%
%
\item The last four hypothesis come from theorem 
4.4.1 of \cite{BLGGT} which is
some level raising and lowering statement, cf. theorem \ref{thm-raising}. Any other similar
statement, for example theorem 5.1.5 of \cite{gee-annalen}, 
with different hypothesis can then be used to modify the hypothesis of the theorem above.

\item The hypothesis $q_v \equiv 1 \mod l$ with $l>d$ is essentially used  
to avoid cuspidal $\overline \Fm_l$-representations which are not supercuspidal:
it is useful 
\begin{itemize}
\item to prove, cf. \S \ref{para-filt-strict}, that the graded parts of the filtration given by the nearby cycles
spectral sequence is strict,

\item and to construct, cf. \S \ref{para-mono}, 
the $\overline \Zm_l$ version of the geometric monodromy operator.
\end{itemize}
In particular the statement should also true in the banal case with the same other hypothesis
which are quite different to those of \cite{boyer-ihara} where one asks that 
$\overline \rho_{\mathfrak m,v}$ is multiplicity free.
\end{itemize}


\medskip

The basic idea\footnote{this explains the hypothesis on the existence of $p_0$ in the statement}, cf. \S \ref{para-geo},  as in \cite{boyer-ihara},
is to introduce geometry and move from the
Shimura variety associated to $\overline G$ which is of dimension zero,
to another Shimura variety $\sh_{K}$
associated to some reductive group $G$ and level $K$ with $K^{v_1}=\overline K^{v_1}$. 
This KHT-Shimura variety is now 
of strictly positive dimension and, if $K_{v_1}$ is small enough depending of $\overline K_{v_1}$,
then, cf. lemma 2.3.1 of \cite{boyer-ihara}, $S_{\overline G}(\overline K,\overline \Fm_l)$
appears in the middle degree cohomology group of $\sh_{K}$ with coefficient in $\overline \Fm_l$. 

To compute this $\overline \Fm_l$-cohomology group, the idea is to look at the 
cohomology over $\overline \Zm_l$ and then take modulo $l$
reduction. For this we will need to fix the following data.
\begin{nota}
Consider a coefficient field $L$ which is a large enough
finite extension of $\Qm_l$, with ring of integers $\OC_L$ and residue field 
$\OC_L/\varpi_F\OC_L \Fm_L$ for some fixed uniformizer $\varpi_L$.
\end{nota}
We then want to construct a filtration of the middle cohomology group of $\sh_K$
with coefficients in $\OC_L$ so that the graded parts, which are expected 
to be more easy to handle with, all verify the genericity 
property of their irreducible sub-spaces.
More explicitly we study the middle degree cohomology group with coefficients
in $\overline \Zm_l$, of the
KHT Shimura variety $\sh_{K^v(\oo)}$ associated to some similitude group
$G/\Qm$ such that $G(\Am_\Qm^{\oo,p}) \simeq \overline G(\Am_\Qm^{\oo,p})$,
cf. \S \ref{subsec-KHT} for more details, and with level $K^v(\oo):=\overline K^v$ 
meaning finite level outside $v$ and infinite level at $v$. 
The localization at $\mathfrak m$ of the cohomology groups of $\sh_{K^v(\oo)}$
can be computed as the cohomology of the
geometric special fiber $\sh_{K^v(\oo),\bar s_v}$ 
of $\sh_{K^v(\oo)}$, with coefficient in the complex of nearby cycles 
$\Psi_{K^v(\oo),v}$.

The Newton stratification of $\sh_{K^v(\oo),\bar s_v}$ gives us a filtration of 
$\Psi_{K^v(\oo),v}$, cf. \cite{boyer-FT}, and so 
a filtration $\Fil^\bullet(K^v(\oo))$
of $H^{d-1}(\sh_{K^v(\oo),\bar \eta_v},\overline \Zm_l)_{\mathfrak m}$ 
and the main 
point of \cite{boyer-ihara} is to prove that the modulo $l$ reduction of 
each graded part of this filtration verifies
the Ihara property, i.e. each of their irreducible sub-space are generic.
To realize this strategy 
\begin{itemize}
\item we need first the cohomology groups of 
$\sh_{K^v(\oo)}$ to be torsion free: this point is now essentially settled by 
the main result of \cite{boyer-jep2}. 

\item More crucially the previous filtration
$\Fil^\bullet(K^v(\oo))$
should be strict, i.e. its graded parts have to be torsion free, cf. theorem \ref{thm-recall}.

\item For each of the graded parts, the $\overline \Qm_l$-cohomology can be described
by a set of automorphic representations, each of them giving an automorphic contribution.
We choose any numbering of this discrete set and for each $n$, we 
choose a coefficient field $L$ large enough to be able to separate the first $n$
automorphic contributions. The cohomology over $\OC_L$ gives us a lattice 
which then allows us to
deal with the modulo $l$ reduction meaning reduction modulo $\varpi_L \OC_L$.
\end{itemize}
\rem In the following we will just write modulo $l$ reduction for this construction.

It appears that the graded parts of $\Fil^\bullet(K^v(\oo))$ 
are parabolically induced and in the limit case when 
the order $q_v$ of the residue field is such that $q_v \equiv 1 \mod l$,
the socle of the modulo $l$ reduction of these parabolic induced representations are 
no more
irreducible and do not fulfill the Ihara property, i.e. some of their subspaces are not
generic. It then appears that we have at least
\begin{itemize}
\item to verify that the modulo $l$ reduction of the 
first non trivial graded part of $\Fil^\bullet(K^v(\oo))$ verifies 
the genericity property of its irreducible submodule. For this we 
need a level raising statement as theorem 5.1.5 in \cite{gee-annalen}, cf. theorem \ref{thm-raising}, or theorem 4.4.1 of \cite{BLGGT}.

\item Then we have to understand that
the extensions between the graded parts of 
$\Fil^\bullet(K^v(\oo)) \otimes_{\overline \Zm_l} \overline \Fm_l$
are non split. 
\end{itemize}
One problem about this last point is that the $\overline \Qm_l$-cohomology
is split. For any irreducible automorphic representation $\Pi$ of $G(\Am)$
cohomological for, say, the trivial coefficients, the $\overline \Zm_l$-cohomology
defines a lattice $\Gamma(\Pi)$ of $(\Pi^\oo)^{K^v(\oo)} \otimes \sigma(\Pi)_v$ 
whose modulo
$l$ reduction gives a subspace of the $\overline \Fm_l$-cohomology: Ihara's lemma
predicts that the socle of this subspace is still generic, i.e. it gives informations
about the lattice $\Gamma(\Pi)$. We then see that
non splitness of $\Fil^\bullet(K^v(\oo))\otimes_{\overline \Zm_l} \overline \Fm_l$ 
should be understood in a very flexible
point of view. We then naturally face to prove the following result.

\begin{prop} (cf. \ref{prop-BLC2}) 
The contribution $\Gamma(\Pi)$ of an automorphic representation $\Pi$
to the cohomology viewed as a subrepresentation, 
defines a stable lattice of $\Pi_v$ uniquely defined by the property
that the socle of its modulo $l$ reduction is irreducible and generic.
\end{prop}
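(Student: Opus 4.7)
The plan is to prove the statement in two stages: first that $\Gamma(\Pi) \otimes_{\OC_L} \Fm_L$ does have irreducible generic socle, and then that this property characterises the lattice uniquely among all stable $\OC_L$-lattices in $\Pi_v$.

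For the first stage, the torsion-freeness of $H^{d-1}(\sh_{K^v(\oo),\bar\eta_v}, \overline\Zm_l)_{\mathfrak m}$ established in \cite{boyer-jep2}, together with the choice of a sufficiently large coefficient field $L$ separating the $\Pi$-isotypic component, gives a well-defined stable $\OC_L$-lattice $\Gamma(\Pi)$ inside $(\Pi^{\oo})^{K^v(\oo)} \otimes \sigma(\Pi)_v$. To analyze its mod $l$ reduction, I would use the filtration $\Fil^\bullet(K^v(\oo))$ coming from the Newton stratification. By the strictness of this filtration established in Theorem \ref{thm-recall}, its graded pieces are torsion-free, so they reduce modulo $l$ giving an honest filtration of $\Gamma(\Pi) \otimes \Fm_L$ whose socle is controlled by the lowest nontrivial graded piece. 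The level-raising statement of Theorem \ref{thm-raising} (Theorem 5.1.5 of \cite{gee-annalen}) guarantees that this lowest graded piece admits a generic irreducible submodule, and since generic irreducible mod $l$ representations of $\GL_d(F_v)$ appear with multiplicity one in this context, the socle is in fact irreducible.

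The second stage, uniqueness, is the main technical obstacle. In the limit case, $\Pi_v$ is parabolically induced from unramified Speh-type components, and the Jordan--H\"older constituents of its mod $l$ reduction form a combinatorially rich poset. Stable $\OC_L$-lattices in $\Pi_v$ are classified, up to homothety, by a combinatorial datum prescribing the position of the generic constituent relative to the others in the socle--cosocle structure. I would show that requiring the mod $l$ socle to be irreducible cuts the parameter space down to only a few lattices, and that additionally requiring this socle to be generic (rather than being a non-generic character-like constituent) uniquely singles out one homothety class. Carrying out this combinatorial classification in the limit case, where the mod $l$ reduction is far from semisimple, is the crux of the argument; once it is established, $\Gamma(\Pi)$ is identified as the unique such lattice, and the proposition follows.
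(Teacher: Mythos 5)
The paper's own proof of this statement is essentially a two-line corollary of Ihara's lemma (the main theorem of the paper, established in \S\ref{para-3}): any irreducible subspace of $H^0(\sh_{K^v(\oo),\bar s_v},\Psi)_{\mathfrak m}\otimes\overline\Fm_l$ is generic by the main theorem, hence so is the socle of $\Gamma(\Pi)\otimes\Fm_L$; irreducibility of the socle and uniqueness of the homothety class of the lattice both follow directly from Lemma 4.1.1 of \cite{EGS}, which is already cited in \S\ref{para-gen} to introduce the notation $\Gamma(\Pi)^{gen}$. Your proposal takes a genuinely different — and substantially heavier — route, and it has two real gaps.

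First, your stage-one argument compresses into a single sentence (``the level-raising statement of Theorem \ref{thm-raising} guarantees that this lowest graded piece admits a generic irreducible submodule'') what is in fact the entire content of Sections \ref{para-mono}--\ref{para-generalcase}. The level-raising theorem by itself does not produce a generic submodule of the lowest graded piece of $\Fil^\bullet(K^v(\oo))$; as explained in \S\ref{sec-hecke}, in the limit case the graded parts are parabolically induced and their mod $l$ reduction has \emph{both} generic and non-generic subspaces. What actually rules out the non-generic ones is the interplay of three ingredients: the integral monodromy operator $\overline N^{coho}_{v,\mathfrak m}$ of \S\ref{para-mono}, the exchange process on lattices of \S\ref{para-filtration}, and the typicity/multiplicity-one machinery of \S\ref{para-lattice} (culminating in Proposition \ref{prop-BLC}). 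Level raising enters only as a technical device (step 3(b) of \S\ref{para-s3}) to replace one cuspidal by another so that multiplicity-one hypotheses on $K_v$-types can be arranged. Your outline also asserts that the socle of $\Gamma(\Pi)\otimes\Fm_L$ is ``controlled by the lowest nontrivial graded piece'' of the Newton filtration, but the induced filtration on $\Gamma(\Pi)$ is not a socle filtration, and in general $\soc(M)$ is not contained in $\Fil^1$ — the paper's Step 4 arguments go the other way round, showing that any non-generic irreducible subspace of $\Gamma(\Pi)\otimes\Fm_L$ cannot inject into the full cohomology.

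Second, your uniqueness argument is announced (``carrying out this combinatorial classification \ldots\ is the crux'') but not carried out, and it is unnecessary: the paper gets this for free from \cite{EGS} Lemma 4.1.1, which says precisely that among the stable lattices in an integral generic representation there is a unique homothety class whose mod $l$ reduction has irreducible generic socle. You should invoke that directly rather than promise a classification of lattices, since in the limit case where $q_v\equiv 1\bmod l$ the lattice poset is genuinely complicated. In sum, the proposal is circular in a subtle way — it needs essentially the whole proof of Ihara's lemma at a point where the paper simply cites the already-proved theorem — and both of the load-bearing steps are left to the reader.
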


In particular this lattice should depend only on $\Pi_v$ and not on 
the global representation $\Pi$. This statement looks similar to the Breuil lattice conjecture 
which is stated when $l=p$ and $K_v$-types. We then first prove a simple version
of this result, cf. proposition \ref{prop-BLC} which can be stated as follows.
\begin{itemize}
\item Consider some fixed $K_v$-type $\sigma_{v,\overline \Qm_l}$
\item and a system $\lambda$ of Hecke eigenvalues associated to some automorphic
representation $\Pi$ as above 
\item such that $\sigma_{v,\overline \Qm_l}$ appears with multiplicity one in $\Pi_v$.
\end{itemize}
Then the lattice of $\sigma_{v,\overline \Qm_l} \cap \Gamma(\Pi)$
depends only on the modulo $l$ reduction of $\lambda$.
One combinatorial problem is then to recover the information on the $\GL_d(F_v)$-lattices
from this vague obversation, cf. \S \ref{para-generalcase}.
The idea to do so is to start from the filtration $\Fil^\bullet(K^v(\oo))$ 
coming from a filtration of the nearby cycles perverse sheaf and modify it step by step,
cf. \S \ref{para-filtration},
until we arrive at an automorphic filtration, cf. \S \ref{para-s3}, i.e. where the graded parts correspond
to the contribution of some automorphic representation.
The main ingredient to construct modifications of filtrations is to consider
situations as illustrated in the figure \ref{fig-exchange}.
\begin{itemize}
\item A filtration $\Fil^\bullet$ of 
$H^{d-1}(\sh_{K^v(\oo),\bar \eta_v},\overline \Zm_l)_{\mathfrak m}$ whose
graded parts $\gr^\bullet$ are torsion free;

\item let $k$ and $X:=\Fil^k/\Fil^{k-2}$ such that 
$X \otimes_{\overline \Zm_l} \overline \Qm_l \simeq
( gr^{k-1} \otimes_{\overline \Zm_l} \overline \Qm_l) \oplus 
(\gr^{k} \otimes_{\overline \Zm_l} \overline \Qm_l)$.

\item We can then define 
$\widetilde{\gr^k}:=X \cap (\gr^k \otimes_{\overline \Zm_l} \overline \Qm_l)$
and the quotient $\widetilde{\gr^{k-1}}$: note that $T$ is torsion. 
We choose a $L$ finite over $\Qm_l$,
so that this diagram is defined over $\OC_L$ and which allows to look
modulo $l$, meaning modulo $\varpi_L \OC_L$, where we then obtained 
a priori two distinct filtrations.
\end{itemize}

Let us first explain why something interesting should happen during this process.
\\
(a) We can define a $\overline \Fm_l$-monodromy operator for the Galois 
action at the place $v$.\footnote{Note that over $\overline \Fm_l$ the usual arithmetic 
approach for defining the nilpotent monodromy operator, 
is hopeless because, up to consider a finite extension of $F_v$, such
a $\overline \Fm_l$-representation has a trivial action of the inertia group.}
We are looking for a geometric monodromy operator $N^{geo}_v$ which then exists
whatever are the coefficients, $\overline \Qm_l$, $\overline \Zm_l$
and $\overline \Fm_l$, compatible with tensor products.
One classical construction is known in the semi-stable
reduction case, cf. \cite{ill} \S 3, which corresponds to the case where the level at $v$ of our Shimura
variety is of Iwahori type.\footnote{This corresponds to automorphic representations
$\Pi$ such that the cuspidal support of $\Pi_v$ is made of unramified characters, 
and so with the weak form of Ihara's lemma of definition \ref{defi-ihara1}.}
Using our knowledge of the $\overline \Zm_l$-nearby cycles described
completely in \cite{boyer-duke}, we can construct such a geometric
nilpotent monodromy operator which generalizes the semi-stable
case,
cf. \S \ref{para-mono}.
%
%
\\
(b) Taking this geometric monodromy operator, we then obtain
a cohomological monodromy operator $N_{v,\mathfrak m}^{coho}$ acting on
$H^0(\sh_{K,\bar s_v},\Psi_{K^v(\oo),v})_{\mathfrak m}$
One of the main point, cf theorem \ref{thm-recall}, is that the graded parts of the
filtration of $H^0(\sh_{K,\bar s_v},\Psi_{K^v(\oo),v})_{\mathfrak m}$ induced by
the Newton filtration on
the nearby cycles spectral sequence, are all torsion free, so that in particular
we are in position to understand quite enough the action of
$\overline N^{coho}_{v,\mathfrak m}:= N^{coho}_{v,\mathfrak m} \otimes_{\overline \Zm_l} \overline \Fm_l$ on
$H^0(\sh_{K,\bar s_v},\Psi_{K^v(\oo),v})_{\mathfrak m} \otimes_{\overline \Zm_l} \overline \Fm_l$,  and prove
that its nilpotency order is as large as possible.
\\
(c) Note that as $\overline \rho_{\mathfrak m}$ is supposed to be irreducible, then
the modulo $l$ reduction of the monodromy operator acting on 
$\rho_{\widetilde{\mathfrak m}}$ does not depend on the choice of the prime ideal
$\widetilde{\mathfrak m} \subset \mathfrak m$ so that it is usually trivial.
\begin{figure}[!ht]
$$\xymatrix{
& \widetilde{\gr^k} \ar@{^{(}->}[d] \ar@{^{(}->}[dr] \\
\gr^{k-1} \ar@{^{(}->}[r] \ar@{^{(}->}[dr] & X \ar@{->>}[r] \ar@{->>}[d] & 
\gr^k \ar@{->>}[dr] \\
& \widetilde{\gr^{k-1}} \ar@{->>}[dr] & & T \\
& & T \ar@{=}[ur]
}$$
\caption{Exchange process}\label{fig-exchange}
\end{figure}
Finally, as $N_v^{coho} \otimes_{\overline \Zm_l} \overline \Fm_l$ is far from being
trivial, there should be non split extensions between the graded parts of
%
The heart of our proof is then divided in three main steps:
\begin{itemize}
\item prove that the lattice of $\Pi_v$ induced by $\Gamma(\Pi)$ depends only on
the modulo $l$ reduction of the system of Hecke eigenvalues of a globalization $\Pi$ of $\Pi_v$;

\item using the modulo $l$ monodromy operator $\overline N_v$, prove that this lattice
verifies the Ihara property;

\item prove that the graded parts of our final filtration, which do not verify Ihara's property
can not give a non generic subspace of the all cohomology.
\end{itemize}

To conclude this long introduction, note that
Ihara's lemma in Clozel-Harris-Taylor formulation, was stated in order to be able to
do level raising. In our proof we use level raising statements, proved thanks to Taylor's Ihara
avoidance in \cite{taylor-avoidance}, in order to prove Ihara's lemma. 
Then we can see our arguments as the proof that
\textit{level raising implies Ihara's lemma in the limit case}.

\section{Preliminaries}

\subsection{Representations of $\GL_d(M)$}
\label{para-gen}

Consider a finite extension $M/\Qm_p$ with residue field $\Fm_q$. 
We denote by $|-|$ its absolute value.
For a representation $\pi$ of $\GL_d(M)$ and $n \in \frac{1}{2} \Zm$, set 
$$\pi \{ n \}:= \pi \otimes q^{-n \val \circ \det}.$$

\begin{nota} \label{nota-ind}
For $\pi_1$ and $\pi_2$ representations of respectively $\GL_{n_1}(M)$ and
$\GL_{n_2}(M)$, we will denote by
$$\pi_1 \times \pi_2:=\ind_{P_{n_1,n_1+n_2}(M)}^{\GL_{n_1+n_2}(M)}
\pi_1 \{ \frac{n_2}{2} \} \otimes \pi_2 \{-\frac{n_1}{2} \},$$
the normalized parabolic induced representation where for any sequence
$\underline r=(0< r_1 < r_2 < \cdots < r_k=d)$, we write $P_{\underline r}$ for 
the standard parabolic subgroup of $\GL_d$ with Levi
$$\GL_{r_1} \times \GL_{r_2-r_1} \times \cdots \times \GL_{r_k-r_{k-1}}.$$ 
\end{nota}

Recall that a representation
$\varrho$ of $\GL_d(M)$ is called \emph{cuspidal} (resp. \emph{supercuspidal})
if it is not a subspace (resp. subquotient) of a proper parabolic induced 
representation. When the field of coefficients is of characteristic zero, these 
two notions coincides, but this is no more true over $\overline \Fm_l$.

\begin{defin} \label{defi-LT}
\label{defi-rep} (see \cite{zelevinski2} \S 9 and \cite{boyer-compositio} \S 1.4)
Let $g$ be a divisor of $d=sg$ and $\pi$ an irreducible cuspidal 
$\overline \Qm_l$-representation of $\GL_g(M)$. 
The induced representation
\begin{equation} \label{eq-ind-rep}
\pi\{ \frac{1-s}{2} \} \times \pi \{ \frac{3-s}{2} \} \times \cdots \times \pi \{ \frac{s-1}{2} \}
\end{equation}
holds an unique irreducible quotient (resp. subspace) denoted $\st_s(\pi)$ (resp.
$\speh_s(\pi)$); it is a generalized Steinberg (resp. Speh) representation.
Their cuspidal support is the Zelevinsky segment 
$$[\pi \{ \frac{1-s}{2} \},\pi \{ \frac{s-1}{2} \}]:=\Bigl \{ 
\pi\{ \frac{1-s}{2} \},  \pi \{ \frac{3-s}{2} \}, \cdots, \pi \{ \frac{s-1}{2} \} \Bigr \}.$$

\end{defin}
%
%
More generally the set of sub-quotients of the induced representation (\ref{eq-ind-rep})
is in bijection with the following set.
$$\Dec(s)=\{ (t_1,\cdots,t_r), \hbox{ such that } t_i \geq 1 \hbox{ and }
\sum_{i=1}^r t_i=s \}.$$ 
For any $\underline s \in \Dec(s)$, we then denote by $\st_{\underline s}(\pi)$
the associated irreducible sub-quotient of (\ref{eq-ind-rep}).
Following Zelevinsky, we fix this bijection such that $\speh_s(\pi)$ corresponds
to $(s)$ and $\st_s(\pi)$ to $(1,\cdots,1)$. The Lubin-Tate representation
$LT_{h,s}(\pi)$ will also appear in the following, it corresponds with
$(\overbrace{1,\cdots,1}^h,s-h)$.

\begin{defin} For $1 \leq s-1$, we say that $\underline s \in \Dec(s)$ is $t$-small if
the number of consecutive $1$ is less than $t$. We say that an irreducible subquotient of 
(\ref{eq-ind-rep}) is $(\pi,t)$-small if its parameter $\underline s \in \Dec(s)$
is $t$-small. 
\end{defin}

\begin{prop} \label{prop-red-modl} (cf. \cite{vigneras-livre} III.5.10)
Let $\pi$ be an irreducible cuspidal representation of $\GL_g(M)$ with a stable
$\overline \Zm_l$-lattice\footnote{As before, we fix $L/\Qm_l$ large enough and consider an $\OC_L$-lattice: we say that $\pi$ is integral.}, then its modulo $l$ 
reduction is irreducible and cuspidal (but not necessary supercuspidal).
\end{prop}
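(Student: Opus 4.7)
The plan is to reduce to the structural classification of cuspidal representations of $\GL_g(M)$ by Bushnell--Kutzko, and then analyse the modulo $l$ reduction piece by piece.

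First, since over $\overline{\Qm}_l$ cuspidality and supercuspidality coincide, I can apply the compact induction theorem: there exists a maximal simple type $(J,\lambda)$ with $J$ open and compact modulo the centre and $\lambda$ an irreducible smooth representation of $J$ such that $\pi \simeq \cInd_J^{\GL_g(M)} \lambda$. The hypothesis that $\pi$ is integral means its central character has values in $\overline{\Zm}_l^\times$; twisting by an unramified character if necessary, I may arrange that $\lambda$ is also integral, i.e.\ admits a stable $\OC_L$-lattice for some sufficiently large finite extension $L/\Qm_l$. Since compact induction commutes with reduction modulo $\varpi_L$, one has
\[
\pi \otimes_{\overline{\Zm}_l} \overline{\Fm}_l \;\simeq\; \cInd_J^{\GL_g(M)} \bigl(\lambda \otimes_{\overline{\Zm}_l} \overline{\Fm}_l\bigr),
\]
so the whole question reduces to two claims: (a) $\bar\lambda := \lambda \otimes \overline{\Fm}_l$ is irreducible, and (b) $(J, \bar\lambda)$ retains the intertwining property characterising supercuspidal types.

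For (a), I would use the decomposition $\lambda = \kappa \otimes \sigma$ intrinsic to the Bushnell--Kutzko theory, where $\kappa$ is a beta-extension of a Heisenberg representation of dimension a power of $p$, and $\sigma$ is inflated from an irreducible cuspidal representation of a finite general linear group $\GL_m(k')$ for some finite extension $k'/\Fm_q$. Since $l \neq p$, the $p$-power dimension of $\kappa$ ensures that the reduction $\bar\kappa$ remains irreducible (the whole restriction to the relevant pro-$p$ subgroup is determined by its central character on the Heisenberg quotient). The factor $\bar\sigma$ is irreducible by the theorem of James on the mod $l$ reduction of cuspidal representations of $\GL_m(k')$: every such $\sigma$ is integral and its reduction stays irreducible (though possibly no longer supercuspidal, which is precisely the caveat in the statement). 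Combining, $\bar\lambda \simeq \bar\kappa \otimes \bar\sigma$ is irreducible.

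For (b), cuspidality of $\bar\pi = \cInd_J^{\GL_g(M)} \bar\lambda$ is equivalent, by Frobenius reciprocity and the geometric lemma, to the vanishing of $\bar\lambda$-isotypic Jacquet modules along every proper parabolic. This vanishing is controlled by intertwining: an element $g$ intertwines $(J, \bar\lambda)$ iff it intertwines $(J, \lambda)$, because the intertwining condition only depends on the restriction of $\lambda$ to certain pro-$p$ subgroups on which $\bar\lambda$ and $\lambda$ have the same irreducible constituents (the reduction being bijective on isomorphism classes in $l'$-index). Hence the support of intertwining is unchanged, the double-coset computation showing cuspidality of $\pi$ carries over verbatim, and $\bar\pi$ is cuspidal.

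The main obstacle is (a): the irreducibility of $\bar\sigma$ for the finite-group factor is a non-trivial input (James's theorem), and one must check that the beta-extension $\kappa$ can be chosen with an integral model whose reduction retains the property that $\bar\kappa|_{J^1}$ is the unique irreducible representation with the prescribed central character, a point which is handled in Vignéras's book precisely at \cite{vigneras-livre} III.5.10 and to which I would appeal directly.
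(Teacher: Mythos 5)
The paper gives no proof of this proposition; it is a citation to Vignéras III.5.10, so there is no internal argument to compare yours against. Your type-theoretic sketch does follow the route of the cited reference, and the main ingredients are correctly identified: compact induction from a maximal simple type $(J,\lambda)$, the splitting $\lambda = \kappa\otimes\sigma$, irreducibility of $\bar\kappa$ from pro-$p$ Clifford theory since $l\neq p$, and irreducibility of $\bar\sigma$ from the James/Dipper--James theorem on modular cuspidals of finite general linear groups.

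Two points need repair. First, you route cuspidality of $\bar\pi$ through the intertwining argument, but cuspidality is in fact the elementary part and needs no intertwining at all: unipotent radicals of parabolics are pro-$p$ and $l\neq p$, so the Jacquet functor $(-)_N$ commutes with reduction modulo $\varpi_L$. Concretely, if $\Lambda\subset\pi$ is a stable $\OC_L$-lattice then the exact sequence $0\to\Lambda\xrightarrow{\varpi_L}\Lambda\to\Lambda/\varpi_L\to0$ yields (by exactness of $(-)_N$) that $\varpi_L$ acts injectively on $\Lambda_N$; but $\Lambda_N\otimes_{\OC_L}L=\pi_N=0$ forces $\Lambda_N$ to be torsion, hence $\Lambda_N=0$ and $\bar\pi_N=0$. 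Second, and more seriously, what the intertwining computation actually buys you is \emph{irreducibility} of $\bar\pi=\cInd_J^{\GL_g(M)}\bar\lambda$, not cuspidality, and your justification that $I_G(\bar\lambda)=I_G(\lambda)$ because intertwining ``only depends on restrictions to pro-$p$ subgroups'' is not correct as stated. For a maximal simple type the intertwining set is computed from the intertwining of the Heisenberg representation $\eta$ on $J^1$ \emph{together with} a condition on the level-zero component $\sigma$ through the finite quotient $J/J^1\simeq\GL_m(k_E)$, which is not pro-$p$. The point that saves the argument is that cuspidality of $\bar\sigma$ (exactly the output of James's theorem you already invoke) supplies the same level-zero condition over $\overline\Fm_l$ as cuspidality of $\sigma$ does over $\overline\Qm_l$, so the intertwining sets agree; you should state this rather than appeal only to pro-$p$ data. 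With these two adjustments — separating the easy cuspidality argument from the intertwining-based irreducibility argument, and giving the correct reason the intertwining is preserved — the sketch is sound.
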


We now suppose as explained in the introduction that 
$$q \equiv 1 \mod l \quad \hbox{ and } \quad l>d$$ 
so the following facts are verified (cf. \cite{vigneras-livre} \S III):
\begin{itemize}
\item the modulo $l$ reduction of every irreducible 
cuspidal representation of $\GL_g(M)$ for $g \leq d$, is supercuspidal\footnote{In the
banal case this is not always the case but it is when the cuspidal support contains only
characters.}:
with the notation of \cite{boyer-repmodl} proposition 1.3.5, $m(\varrho)=l>d$ for any irreducible $\overline \Fm_l$-supercuspidal representation $\varrho$.

\item For a $\overline \Fm_l$-irreducible supercuspidal representation $\varrho$
of $\GL_g(M)$, 
the parabolic induced representation $\varrho \times \cdots \times \varrho$,
with $s$ copies of $\varrho$,
is semi-simple with irreducible constituants\footnote{some of them might be isomorphic to each others} the modulo $l$ reduction of the set
of elements of $\{ \st_{\underline s}(\pi) \hbox{ such that } \underline s \in \Dec(s) \}$, 
where $\pi$ is any cuspidal representation whose modulo $l$ reduction is isomorphic to
$\varrho$.
\end{itemize}
%
%
%
%
%

\medskip

Concerning the notion of genericity, consider the mirabolic subgroup
$\Mm_d(M)$ of $\GL_d(M)$ as the set of matrices with last row $(0,\cdots,0,1)$: we denote
by
$$\Vm_d(M)=\{ (m_{i,j}) \in \Mm_d(M):~m_{i,j}= \delta_{i,j} \hbox{ for } j < d \}.$$ 
its unipotent radical. We fix a non trivial character $\psi$ of $L$ and let $\theta$ be the
character of $\Vm_d(M)$ defined by $\theta( (m_{i,j}))=\psi(m_{d-1,d})$.
For $G=\GL_r(M)$ or $\Mm_r(M)$, we denote by $\alg(G)$ the abelian category of smooth
representations of $G$ and, following \cite{zelevinski1}, we introduce
$$\Psi^-: \alg(\Mm_d(M)) \longrightarrow \alg(\GL_{d-1}(M)),$$
and
$$\Phi^-: \alg (\Mm_d(M)) \longrightarrow \alg (\Mm_{d-1}(M)),$$
defined by $\Psi^-=r_{\Vm_d,1}$ (resp. $\Phi^-=r_{\Vm_d,\theta}$) the functor of $\Vm_{d}$
coinvariants (resp. $(\Vm_{d},\theta)$-coinvariants), cf. \cite{zelevinski1} 1.8.
For $\tau \in \alg(\Mm_d(M))$, the representation 
$$\tau^{(k)}:=\Psi^- \circ (\Phi^-)^{k-1}(\tau)$$
is called the $k$-th derivative of $\tau$. If $\tau^{(k)}\neq 0$ and $\tau^{(m)}=0$
for all $m > k$, then $\tau^{(k)}$ is called the highest derivative of $\tau$. 
In the particular case where
$k=d$, there is an unique irreducible representation $\tau_{nd}$ of $\Mm_d(M)$
with derivative of order $d$.

\begin{defin} An irreducible representation $\pi$ of $\GL_d(M)$ is said generic,
if its restriction to the mirabolic subgroup admits $\tau_{nd}$ as a subquotient.
\end{defin}

Let $\pi$ be an irreducible generic $\overline \Qm_l$-representation of $\GL_d(M)$ and consider
any stable lattice which gives us by modulo $l$ reduction a $\overline \Fm_l$-
representation uniquely determined up to semi-simplification. Then this modulo $l$
reduction admits an unique generic irreducible constituant.

\begin{lem} \label{lem-tsmall}
Let $1 \leq t \leq s$ and consider 
\begin{itemize}
\item $\underline s=(l_1 \geq \cdots \geq l_r)$ a partition of $s$ with $l_1 < t$;

\item $\pi_1, \cdots,\pi_r$ irreducible cuspidal representations whose modulo $l$ reductions
are isomorphic to $\varrho$.
\end{itemize}
There exists an irreducible subquotient $\tau$ of 
$\st_{l_1} (\pi_1) \times \cdots \times \st_{l_r}(\pi_r)$ such that
\begin{itemize}
\item whatever is $\underline s'=(l'_1 \geq \cdots \geq l'_{r'})$ with $l'_1 \geq t$,

\item and $\pi'_1,\cdots, \pi'_{r'}$ irreducible cuspidal representations with modulo $l$
reduction isomorphic to $\varrho$,
\end{itemize}
then $\tau$ is not a subquotient of the modulo $l$ reduction of
$\st_{l'_1} (\pi'_1) \times \cdots \times \st_{l'_r}(\pi'_{r'})$.
\end{lem}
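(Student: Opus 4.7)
The plan is to reduce to a classical computation with Specht modules for the symmetric group. In the limit case ($q_v\equiv 1\pmod l$, $l>d$), every twist $\varrho\{k\}$ is identified with $\varrho$, so each segment $[\pi_i\{(1-l_i)/2\},\ldots,\pi_i\{(l_i-1)/2\}]$ reduces modulo $l$ to $l_i$ copies of $\varrho$. By Proposition~\ref{prop-red-modl} together with the remarks on modular genericity in the preliminaries, $\overline{\st_{l_i}(\pi_i)}$ is the unique generic irreducible subquotient of $\varrho^{\times l_i}$, independent of the integral lift $\pi_i$, and is the constituent $\langle(1^{l_i})\rangle$ of $\varrho^{\times l_i}$ (using the shorthand $\langle\mu\rangle:=\overline{\st_{\underline s}(\pi)}$ for $\underline s$ having underlying partition $\mu$). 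By exactness of parabolic induction, the modulo $l$ reduction of $\st_{l_1}(\pi_1)\times\cdots\times\st_{l_r}(\pi_r)$ sits inside the semisimple $\varrho^{\times s}$ and depends only on the partition $\lambda:=(l_1,\ldots,l_r)$.

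Next, I invoke the Grothendieck-group description of the Bernstein block of supercuspidal support $\{\varrho,\ldots,\varrho\}$ in terms of $\overline{\Fm}_l[\mathfrak S_s]$-modules (classical after Vign\'eras and M\'inguez-S\'echerre). Since $l>d\geq s$ the group algebra is semisimple with simple modules the Specht modules $S^\mu$; parabolic induction corresponds to induction between Young subgroups, and the bijection $\langle\mu\rangle\leftrightarrow S^\mu$ identifies $\overline{\st_a(\pi)}=\langle(1^a)\rangle$ with the sign $\mathrm{sgn}_a=S^{(1^a)}$. Young's rule, applied after twisting by the global sign, then gives in the Grothendieck group the identity
\[
\bigl[\overline{\st_{l_1}(\pi_1)\times\cdots\times\st_{l_r}(\pi_r)}\bigr]=\bigl[\Ind_{\mathfrak S_\lambda}^{\mathfrak S_s}\mathrm{sgn}_\lambda\bigr]=\sum_{\mu}K_{\mu^T,\lambda}\,[\langle\mu\rangle],
\]
where $K_{\nu,\lambda}$ is the Kostka number (nonzero iff $\nu\geq\lambda$ in dominance, with $K_{\lambda,\lambda}=1$). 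The composition factors are therefore exactly the $\langle\mu\rangle$ with $\mu\leq\lambda^T$, and $\langle\lambda^T\rangle$ appears with multiplicity one.

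Setting $\tau:=\langle\lambda^T\rangle$, which is a constituent of the first reduction by the formula above, it remains to rule $\tau$ out of the modulo $l$ reduction of $\st_{l'_1}(\pi'_1)\times\cdots\times\st_{l'_{r'}}(\pi'_{r'})$ for any $\lambda':=(l'_1,\ldots,l'_{r'})$ with $l'_1\geq t$. This reduces to showing $\lambda^T\not\leq(\lambda')^T$ in dominance. Since $l_1<t$, the partition $\lambda^T$ has length $l_1\leq t-1$, so $\sum_{i=1}^{t-1}(\lambda^T)_i=s$. Since $l'_1\geq t$, one has $(\lambda')^T_t\geq 1$ and hence $\sum_{i=1}^{t-1}(\lambda')^T_i\leq s-1$. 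The dominance inequality fails at $k=t-1$, finishing the proof.

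The only non-formal point is the Grothendieck-ring translation invoked in step two: over $\overline{\Qm}_l$ it is the Zelevinsky classification, while in our modular limit case it rests on the work of Vign\'eras and M\'inguez-S\'echerre, the hypothesis $l>d$ being precisely what guarantees that the relevant finite Hecke algebra specializes to the semisimple group algebra $\overline{\Fm}_l[\mathfrak S_s]$ and that the combinatorics reduces to the classical Young's rule.
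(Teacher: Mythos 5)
Your argument is correct, but it proves the lemma by a genuinely different mechanism than the paper. The paper takes for $\tau$ essentially the same representation (the reduction of the product of consecutive, unlinked Steinberg segments, i.e.\ the least non-degenerate constituent of (\ref{eq-ind-rep})), but it rules $\tau$ out of the reduction of $\st_{l'_1}(\pi'_1)\times\cdots\times\st_{l'_{r'}}(\pi'_{r'})$ with a single qualitative invariant: the Bernstein--Zelevinsky derivatives already set up in \S\ref{para-gen}. Every constituent of that reduction has a nonzero derivative of order $l'_1\geq t$, hence of order $t$, while the $t$-th derivative of $\tau$ vanishes because $l_1<t$; no multiplicity computation is required. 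You instead compute the full semisimplification inside $\varrho^{\times s}$ through the dictionary with $\overline{\Fm}_l[\mathfrak S_s]$-modules and conclude by dominance/Kostka combinatorics (indeed the multiplicity of your $\tau$ in the second product is $K_{\lambda,\lambda'}$, which already vanishes because $\lambda_1=l_1<t\leq l'_1$, even more directly than your partial-sum check at $k=t-1$). Your route buys more information -- exact multiplicities and the identification of $\tau$ as the unique multiplicity-one dominance-maximal constituent -- but its entire weight rests on the invoked Grothendieck-group translation: that in the quasi-banal limit case parabolic induction within the $\varrho$-block corresponds to induction between Young subgroups and $\overline{\st_a(\pi)}$ to a one-dimensional module. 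This is true, via the modular type-theoretic/Hecke-algebra equivalences of Vign\'eras and M\'inguez--S\'echerre at parameter $q_\varrho\equiv 1$ with $l>s$, but it is a substantially heavier input than anything the paper needs at this point, so you should either cite it precisely or observe that the derivative argument yields the non-occurrence without it.
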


\begin{proof}
Note that $\st_{l_1} (\pi_1) \times \cdots \times \st_{l_r}(\pi_r)$ has the same modulo $l$
reduction as 
\begin{equation} \label{eq-st-line}
\st_{l_1} (\pi_1\{ \frac{l_1-s}{2} \}) \times \st_{l_2}(\pi_2 \{ \frac{l_2-s}{2} + l_1-1 \} \times
\cdots \times \st_{l_r}(\pi_1 \{ \frac{l_r-s}{2}+ l_1+\cdots+l_{r-1} \})
\end{equation}
where the shifts are chosen so that $\st_{l_1} (\pi\{ \frac{l_1-s}{2} \}) \times
\cdots \times \st_{l_r}(\pi \{ \frac{s-l_r}{2} \})$ is the subquotient of (\ref{eq-ind-rep})
associated to 
$$(\overbrace{1,\cdots,1}^{l_1-1},2,\overbrace{1,\cdots,1}^{l_1-1},2,\cdots,
\overbrace{1,\cdots,1}^{l_r-1}) \in \Dec(s).$$
Note that this irreducible constituant of (\ref{eq-ind-rep}) is the less non degenerate 
subquotient and we denote by $\tau$ is its modulo $l$ reduction which remains
irreducible.

The property stated in the lemma then follows from the fact that whatever is an
irreducible subquotient of the modulo $l$ reduction of
$\st_{l'_1} (\pi'_1) \times \cdots \times \st_{l'_r}(\pi'_{r'})$, it has a non zero derivative of
order $l'_1$, and so a non zero derivative of order $t$, while the derivative of order $t$
of $\tau$ is zero.

\end{proof}

\begin{defin} \label{defi-rhotsmall}
Elements $\tau$ constructed in the above lemma will be said to be
$(\varrho,t)$-small.
\end{defin}

Consider any non degenerate irreducible representation
$\Pi:=\st_{l_1} (\pi_1) \times \cdots \times \st_{l_r}(\pi_r)$ where the modulo $l$ 
reduction of $\pi_1,\cdots,\pi_r$ is isomorphic to $\varrho$. As the modulo $l$ reduction
of $\Pi$ contains an unique irreducible non degenerate subquotient, there exists then
an unique stable lattice such that its modulo $l$ reduction has
an irreducible generic socle $\tau_{gen}$, 
cf. \cite{EGS} lemma 4.1.1. 

\begin{nota}
We denote by $\Gamma(\Pi)^{gen}$ the lattice of $\Pi$ for which the socle of its
modulo $l$ reduction is generic.
\end{nota}
%

\subsection{Weil--Deligne inertial types}
\label{sub:WD:types}

Recall that a Weil-Deligne representation of $W_M$ is a pair $(r,N)$ where
\begin{itemize}
\item $r:W_M \longrightarrow \GL(V)$ is a smooth\footnote{i.e. continuous for
the discrete topology on $V$} representation on a finite dimensional  vector
space $V$; and

\item $N \in \End(V)$ is nilpotent such that
$$r(g) N r(g)^{-1} = ||g|| N,$$
where $||\bullet ||: W_M \rightarrow W_M/I_M \twoheadrightarrow q^\Zm$
takes an arithmetic Frobenius element to $q$.
\end{itemize}

\rem To a continuous\footnote{relatively to the $l$-adic topology
on $V$} representation on a finite dimensional $\Qm_l$-vector space $V$,
$\rho:W_M \longrightarrow \GL(V)$ is attached a Weil-Deligne representation
denoted by $\WD(\rho)$. A Weil representation of $W_M$ is also said of
Galois type if it comes from a representation of $G_M$.

\textit{Main example}: let $\rho:W_M \longrightarrow \GL(V)$ be a smooth
irreducible representation on a finite dimensional vector space $V$. For
$k \geq 1$ an integer, we then define a Weil-Deligne representation
$$\Sp(\rho,k):=\bigl ( V\oplus V(1) \oplus \cdots \oplus V(k-1), N \bigr ),$$
where for $0 \leq i \leq k-2$, the isomorphism $N: V(i) \simeq V(i+1)$ is induced 
by some choice
of a basis of $\overline L(1)$ and $N_{|V(k-1)}$ is zero.
Then every Frobenius semi-simple Weil-Deligne representation of $W_M$
is isomorphic to some $\bigoplus_{i=1}^r \Sp(\rho_i,k_i)$, for
smooth irreducible representations $\rho_i:W_M \longrightarrow \GL(V_i)$
and integers $k_i \geq 1$. Up to obvious reorderings, such a writing is unique.

Let now $\rho$ be a continuous representation of $W_M$, or its
Weil-Deligne representation $\WD(\rho)$, and consider its restriction to $I_M$,
$\tau:=\rho_{|I_M}$. Such an isomophism class of a finite dimensional continuous
representation of $I_M$ is then called \emph{an inertial type}.

\begin{nota} Let $\mathcal{I}_0$ the set of inertial types that extend to a
continuous irreducible representation of $G_M$.
\end{nota}

\rem $\tau \in \mathcal{I}_0$ might not be irreducible.

Let $\Part$ be the set of decreasing sequences of positive integers
$$\underline d=(\underline d(1) \geq \underline d(2) \geq \cdots)$$ 
viewed as a 
partition of $\sum \underline d:=\sum_i \underline d(i)$. We also denote by
$\Part(s)$ the set of partition of $s$: $\Part=\coprod_{s \geq 1} \Part(s)$.
%
%
%
%
%
%

\begin{nota} \label{nota-in-type}
Let $f:\mathcal{I}_0 \longrightarrow \Part$ with finite support. We then denote by
$\tau_f$ the restriction to $I_M$ of
$$\bigoplus_{\tau_0 \in \mathcal{I}_0} \bigoplus_{i}
\Sp(\rho_{\tau_0},f(\tau_0)(i)),$$
where $\rho_{\tau_0}$ is a fixed extension of $\tau_0$ to $W_M$.
\end{nota}

\rem By lemma 3.3 of \cite{ShMa} the isomorphism class of $\tau_f$
is independent of the choices of the $\rho_{\tau_0}$. 

The map from $\{ f:\mathcal{I}_0 \longrightarrow \Part \}$ to
the set of inertial types given by $f \mapsto \tau_f$, is a bijection.
The dominance order $\preceq$ on $\Part$ 
induces a partial order on the set of inertial types.

%
%
%
%
%


%
%
%
%
%
%

We let $\mathrm{rec}_M$ denote the local reciprocity map of \cite[Theorem A]{harris-taylor}.
Fix an isomorphism $\imath: \overline{\Qm}_\ell\risom \Cm$.
We normalize the local reciprocity map $\mathrm{rec}$ of \cite[Theorem A]{harris-taylor}, defined on isomorphism classes of irreducible smooth representations of $\GL_n(M)$ over $\Cm$ as follows: if $\pi$ is the isomorphism class of an irreducible smooth representation of $\GL_n(M)$ over $\overline{\Qm}_\ell$, then
\[
\rho_\ell(\pi)\defeq \imath^{-1}\circ\mathrm{rec}_M\circ\imath(\pi\otimes_{\overline{\Qm}_\ell}|\det|^{(1-n)/2}).
\]
Then $\rho_\ell(\pi)$ is the isomorphism class of an $n$-dimensional, Frobenius semisimple Weil--Deligne representation of $W_M$ over $\overline{\Qm}_\ell$, independent of the choice of $\imath$.
Moreover, if $\rho$ is an isomorphism class of an $n$-dimensional, Frobenius semisimple Weil--Deligne representation of $W_M$ over $M$, then $\rho_{\ell}^{-1}(\rho)$ is defined over $M$ (cf.~\cite[\S 1.8]{CEGGPS}).

Recall the following compatibility of the Langlands correspondence.

\begin{lem}
If $\pi$ and $\pi'$ are irreducible generic representations of $\GL_d(M)$ such
that $\rho_\ell(\pi)|I_M \simeq \rho_\ell(\pi')|I_M$ then 
$\pi_{|\GL_d(\mathcal{O}_M)} \simeq \pi'_{|\GL_d(\mathcal{O}_M)}.$
\end{lem}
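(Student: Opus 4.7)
The plan is to use local Langlands to reduce the statement to an unramified-twist comparison of cuspidal supports, and then exploit the Iwasawa decomposition to conclude. First I would decompose via local Langlands: because $\pi$ is irreducible and generic, one has $\pi \simeq \st_{s_1}(\pi_1) \times \cdots \times \st_{s_r}(\pi_r)$ with $\pi_i$ an irreducible cuspidal representation of $\GL_{g_i}(M)$, the Zelevinsky segments unlinked, and correspondingly $\rho_\ell(\pi) \simeq \bigoplus_i \Sp(\rho_\ell(\pi_i), s_i)$ with $\rho_\ell(\pi_i)$ irreducible; likewise for $\pi'$. By Notation~2.3.7, the restriction to $I_M$ of such a direct sum is encoded by the function $f : \mathcal{I}_0 \to \Part$ sending the inertial type $\rho_\ell(\pi_i)|_{I_M}$ to the partition formed by the exponents $s_j$ for which $\rho_\ell(\pi_j)|_{I_M}$ lies in this class, and the hypothesis $\rho_\ell(\pi)|_{I_M} \simeq \rho_\ell(\pi')|_{I_M}$ forces $f = f'$. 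After reindexing one therefore has $s_i = s'_i$ and $\rho_\ell(\pi_i)|_{I_M} \simeq \rho_\ell(\pi'_i)|_{I_M}$ for each $i$.

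Next, two irreducible cuspidal representations of $\GL_{g_i}(M)$ whose Langlands parameters agree on inertia differ by an unramified twist, so I may write $\pi'_i \simeq \pi_i \otimes (\chi_i \circ \det)$ for unramified characters $\chi_i$ of $M^\times$. Since $\st_{s_i}(\cdot)$ commutes with such twists, $\pi' \simeq \ind_P^{\GL_d(M)}(\sigma \otimes \widetilde\chi)$, where $\pi \simeq \ind_P^{\GL_d(M)}(\sigma)$, $\sigma$ is the external tensor product of the $\st_{s_i}(\pi_i)$ on the Levi $L = \prod_i \GL_{s_i g_i}(M)$ of a standard parabolic $P$, and $\widetilde\chi$ is the character of $L$ whose $i$-th block is $\chi_i \circ \det$.

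Finally, each $\chi_i$ being unramified makes $\widetilde\chi$ trivial on $L \cap K$, where $K := \GL_d(\mathcal{O}_M)$. The Iwasawa decomposition $\GL_d(M) = P \cdot K$ together with Mackey theory identifies
\[
\pi|_K \simeq \Ind_{P \cap K}^{K} (\sigma|_{L \cap K}),
\]
and likewise for $\pi'$ with $\sigma$ replaced by $\sigma \otimes \widetilde\chi$; since $\widetilde\chi|_{L \cap K}$ is trivial, the two restrictions coincide, giving $\pi|_K \simeq \pi'|_K$. The one delicate point is matching the Weil--Deligne decomposition $\rho_\ell(\pi) \simeq \bigoplus_i \Sp(\rho_\ell(\pi_i), s_i)$ with the Zelevinsky--Langlands classification of $\pi$ as a product of generalized Steinbergs of unlinked segments, i.e.~checking that the combinatorics of the $\Sp(\cdot,\cdot)$ summands and their inertial restrictions are faithfully reflected in the $\st_{s_i}(\pi_i)$; this is the standard dictionary and is automatic here since genericity ensures the relevant parabolic induction is already irreducible.
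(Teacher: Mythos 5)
The paper does not actually prove this lemma: it is stated as a recalled compatibility of the local Langlands correspondence, with no argument supplied, so there is nothing in the text to compare your proof against. Your argument is correct and is the natural one. You reduce via the Zelevinsky classification of generics and the compatibility $\rho_\ell(\st_{s_1}(\pi_1)\times\cdots\times\st_{s_r}(\pi_r))\simeq\bigoplus_i\Sp(\rho_\ell(\pi_i),s_i)$ to the cuspidal case, invoke the paper's bijection between functions $f:\mathcal{I}_0\to\Part$ and inertial types to recover the multiplicity data $\{s_i\}$ together with the classes $\rho_\ell(\pi_i)|_{I_M}$, match the two cuspidal supports up to unramified twist, and conclude from the Iwasawa decomposition and Mackey theory since unramified twists and the modulus character are trivial on $P\cap K$.

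Two steps you assert without justification are standard but worth making explicit in a write-up. First, that two irreducible smooth representations $\rho,\rho'$ of $W_M$ with $\rho|_{I_M}\simeq\rho'|_{I_M}$ differ by an unramified twist: choose a Frobenius eigenvector in the nonzero finite-dimensional space $\Hom_{I_M}(\rho,\rho')$ (on which $\Frob$ acts by conjugation since $I_M$ is normal); irreducibility and equality of dimensions make it an isomorphism onto the appropriate unramified twist, and the corresponding twist on the automorphic side follows from the compatibility of $\mathrm{rec}_M$ with character twists. Second, the identification of the Weil--Deligne decomposition with the Zelevinsky classification (your flagged ``delicate point'') is indeed the standard dictionary for generic representations. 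Finally, note that the whole argument uses implicitly that the inertial type carries the monodromy operator $N$ (as in Shotton's framework), since otherwise the partitions $f(\tau_0)$ could not be read off from $\rho_\ell(\pi)|_{I_M}$; your use of the bijection $f\mapsto\tau_f$ is what encodes this.
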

%
%
\begin{thm}\label{thm:ILL} (cf. \cite{B-C} proposition 6.3.3 or \cite{Shotton-n}
theorem 3.7) \\
Let $G = \GL_n$.
Let $\tau$ be a 
inertial type for $M$.
Then there is a smooth irreducible $\GL_n(\OC_M)$-representation $\sigma(\tau)$ over $E$ such that
for any irreducible admissible representation $\pi$ of $\GL_n(M)$, one has:
\begin{enumerate}
\item if $\pi|_{\GL_n(\OC_M)}$ contains $\sigma(\tau)$ then $\rho_{\ell}(\pi)|_{I_M} \preceq \tau$;
\item if $\rho_{\ell}(\pi)|_{I_M} = \tau$, then $\pi|_{\GL_n(\OC_M)}$ contains $\sigma(\tau)$ with multiplicity one; 
\item if $\rho_{\ell}(\pi)|_{I_M} \preceq \tau$ and $\pi$ is generic, then $\pi|_{\GL_n(\OC_M)}$ contains $\sigma(\tau)$ and the multiplicity is one if furthermore $\tau$ is maximal with respect to $\preceq$.
\end{enumerate}
\end{thm}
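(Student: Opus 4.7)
The plan is to construct $\sigma(\tau)$ via Bushnell--Kutzko's theory of simple types, transported to $\GL_n(\OC_M)$ in the spirit of Schneider--Zink and Paskunas, and then to read off the three properties from the compatibility of this construction with local Langlands, supplemented by the uniqueness of types and an explicit Iwahori-Mackey analysis in the generic case.

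First, I would decode $\tau=\tau_f$ into its Bernstein datum. For each $\tau_0$ in the support of $f$, the chosen extension $\rho_{\tau_0}$ of $\tau_0$ to an irreducible $d_{\tau_0}$-dimensional representation of $W_M$ corresponds under local Langlands to a supercuspidal $\overline\Qm_l$-representation $\pi_{\tau_0}$ of $\GL_{d_{\tau_0}}(M)$, where $d_{\tau_0}=\dim\tau_0$. Setting $n_{\tau_0}=\sum_i f(\tau_0)(i)$, the Bernstein block attached to $\tau$ has inertial cuspidal support $\{\pi_{\tau_0}\chi:\chi\text{ unramified}\}$ with multiplicity $n_{\tau_0}$, and total rank $n=\sum_{\tau_0} d_{\tau_0}n_{\tau_0}$. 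The partitions $f(\tau_0)$ record the admissible Weil--Deligne (equivalently, monodromy) configurations within this block, and the dominance order $\preceq$ on $\Part$ matches the relation "specialization of monodromy."

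Next, I would take $\sigma(\tau)$ to be the $\GL_n(\OC_M)$-type attached to this Bernstein block by the Bushnell--Kutzko / Schneider--Zink recipe: construct a simple type $(J_{\tau_0},\lambda_{\tau_0})$ on each $\GL_{d_{\tau_0}n_{\tau_0}}$-factor, tensor, and inflate along a parahoric to a smooth irreducible $\GL_n(\OC_M)$-representation. By design, $\Hom_{\GL_n(\OC_M)}(\sigma(\tau),\pi)\neq 0$ exactly when $\pi$ lies in this block, which gives (1) immediately: the semisimple restriction $\rho_{\ell}(\pi)^{ss}|_{I_M}$ is forced to be $\bigoplus_{\tau_0}\tau_0^{n_{\tau_0}}$, and the only freedom is in the nilpotent part, which can only refine each $f(\tau_0)$ into a partition dominated by it. Property (2) is Paskunas' multiplicity-one theorem for simple types of $\GL_n$: once the Weil--Deligne refinement matches $\tau$ exactly, exactly one copy of $\sigma(\tau)$ sits in $\pi|_{\GL_n(\OC_M)}$.

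For (3), a generic $\pi$ decomposes as $\st_{t_1}(\pi'_1)\times\cdots\times\st_{t_r}(\pi'_r)$ with pairwise unlinked Zelevinsky segments, so the partitions realized by $\rho_{\ell}(\pi)|_{I_M}$ on each $\tau_0$-isotypic piece are determined by the $t_i$'s, and the hypothesis $\rho_{\ell}(\pi)|_{I_M}\preceq\tau$ means they are dominated by $f(\tau_0)$. One then verifies the appearance of $\sigma(\tau)$ by a Mackey-style comparison of $\sigma(\tau)$ restricted to the Iwahori with the known Iwahori-invariants of each $\st_{t_i}(\pi'_i)$; genericity provides the non-vanishing Whittaker vector that forces a nonzero homomorphism, and maximality of $\tau$ for $\preceq$ forces the partitions to coincide and cuts the multiplicity down to one. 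I expect the principal obstacle to lie in exactly this last combinatorial step --- translating the dominance order on $\Part$ into a precise bookkeeping of how $\sigma(\tau)$ embeds into parabolically induced generic representations, uniformly across all $\tau_0$-pieces --- which is essentially the content of Shotton's theorem 3.7 in \cite{Shotton-n} and a mild strengthening of \cite{B-C} Proposition 6.3.3 beyond the equality case (2).
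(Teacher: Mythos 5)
Your proposal follows essentially the same Bushnell--Kutzko/Schneider--Zink construction that the paper itself adopts: the paper does not prove this theorem but cites \cite{B-C} and \cite{Shotton-n}, and then recalls precisely the SZ-datum construction together with the Kostka-number formula of Theorem~\ref{thm-sigmamax}, which yields all three parts in one stroke. Two minor imprecisions worth flagging: (2) is not Paskunas' unicity-of-types theorem but simply the diagonal Kostka number $m(\underline \lambda,\underline \lambda)=1$; and in (3) genericity enters not via Whittaker vectors but only because a generic irreducible $\pi$ is a \emph{full} parabolic induction $\ind_{Q'}^G\pi'$ from a discrete pair, after which $m(\underline \lambda,\underline \mu)\geq 1$ for $\underline \mu \preceq \underline \lambda$ and $m((n),\underline \mu)=1$ give existence and the multiplicity-one under maximality --- the combinatorics you correctly delegate to \cite{Shotton-n}.
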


\rem 
For example take $\tau_0$ the trivial representation and consider the following
partitions $\underline d=(1 \geq 1 \geq \cdots \geq 1)$
(resp. $(d)$) denoted also by $(1^d)$. Denote then by $\tau$ the associated
inertial type of notation \ref{nota-in-type}. Then $\sigma(\tau)$ is
the trivial representation (resp. is inflated from the Steinberg representation
of $\GL_d(\kappa)$). Note also that $\pi$ contains $\sigma(\tau)$ if and only if
$\pi$ is unramified (resp. it implies that $r_{\ell}(\pi)|I_M$ is unipotent).

We need more details about the construction of $\sigma(\tau)$ which rests on the notion
of SZ-stratum of \cite{SZ}. We first recall quickly the basic
notions of type theory of Bushnell and Kutzko. Let $\Km$ denote 
$\overline \Qm_l$ or  $\overline \Fm_l$. Let $V$ be a vector space over $F$ and let
$G=\Aut_F(V)$ and $A=\End_F(V)$. 

\noindent - An $\OC_F$-lattice chain in $V$ is a sequence $\LC=(\Lambda_i)_{i \in \Zm}$
of $\OC_F$-lattices in$ V$ such that $\Lambda_{i+1} \subset \Lambda_i$ for all 
$i \in \Zm$
and such that there exists an period $e \geq 1$ with $\Lambda_{i+i}=\PF_M \Lambda_i$
for all $i \in \Zm$.

\noindent - The hereditary $\OC_M$-orders in $A$ are those orders $\UF$ that arise as the stabiliser 
of some $\OC_F$-lattice chain; it is maximal if and only if it stabilises a lattice
chain of period $e=1$.

\noindent - A hereditary order $\UF \subset A$ has a unique two-sided maximal ideal
$\PF=\{ x \in \UF: x \Lambda_i \subset \Lambda_{i+1} \hbox{ for all } i \in \Zm \}$.
We write $U(\UF)$ for the group of units in $\UF$ and $U^1(\UF):=1+\PF$.

\begin{defin} A simple strata in $A$ is $[\UF,m,0,\beta]$ where
\begin{itemize}
\item $\UF$ is a hereditary $\OC_K$-order in $A$;

\item $m>0$ is an integer;

\item $\beta \in \PF^{-m} \setminus \PF^{1-m}$ is such that $E:=L[\beta]$ is a field and
$E^\times$ is contained in the normalisez of $U(\UF)$;

\item the number $k_0(\beta,\UF(E))$ defined in \cite{BK93} \S 1.4 is strictly negative.
\end{itemize}
\end{defin}
The chain lattice defining $\UF$ can be seen as an $\OC_E$-lattice chain and we denote
$\BF:=\UF \cap \End_E(V)$ its stabiliser. We define the groups $U(\BF)$ and
$U^1(\BF)$ as for $\UF$.
In \cite{BK93}, the authors associate to the simple stratum $[\UF,m,0,\beta]$, 
compact open subgroups $J=J(\beta,\UF)$, $J^1=J^1(\beta,\UF)$ and $
H=H^1(\beta,\UF)$ of $U(\UF)$ such that:
\begin{itemize}
\item $J^1$ is a normal prop-$p$- subgroup of $J$;

\item $H^1$ is a normal subgroup of $J^1$;

\item $U(\BF) \subset J$ and $U^1(\BF) \subset J^1$ and the induced map
$$U(\BF)/U^1(\BF) \longrightarrow J/J^1$$
is a isomorphism.
\end{itemize}
By \cite{BK93} \S 3.2,
there is a set $\CC(\UF,0,\beta)$ of simple characters of $H^1(\beta,\UF)$. For
$\theta \in \CC(\UF,0,\beta)$ there is a unique irreducible representation
$\eta$ of $J^1(\beta,\UF)$ whose restriction to $H^1(\beta,\UF)$ contains $\theta$.
There is then a distinguished class of $\beta$-extensions $\kappa$ of $\eta$ to 
$J(\beta,\UF)$.

It is the main result of \cite{BK99} that every Berstein component of $\Rep_\Km(G)$
has a type and give an explicit construction of them. When $\Omega$ is
supercuspidal then they construct a type $(J,\lambda)$ such that
$J=J(\beta,\UF)$ for a simple stratum $[\UF,m,0,\beta]$ in $A$ in which $\BF$ is a
maximal $\OC_E$-order and $\lambda=\kappa \otimes \nu$ where $\kappa$
is a $\beta$-extension of an irreducible representation $\eta$ containing a simple
character $\theta \in \CC(\UF,0,\beta)$ and $\nu$ is a cuspidal representation
of $J/J^1 \simeq \GL_{n/[E:L]}(k_E)$. This type is called maximal.

From \cite{BK99} \S 4, the character $\theta$ determines a ps-character 
$(\Theta,0,\beta)$ viewed as a function $\Theta$ on the set of simple strata 
$[\UF,m,0,\beta]$ taking such a stratum to an element $\Theta(\UF) \in \CC(\UF,0,\beta)$.
By \cite{BK99} \S 4.5, the endo-class of this ps-character is determine by $\Omega$.

If $[M,\pi]$ is an inertial equivalence class of supercuspidal pair corresponding to a
Berstein component $\Omega$ of $\Rep_\Km(G)$, then let $(J_M,\lambda_M)$
be a maximal type for the supercuspidal Berstein component $\Omega_M$ of
$\Rep_\Km(M)$ containing $\pi$. By the results of \cite{BK99}, there is, what is called
a $G$-cover, $(J,\lambda)$ of $(J_M,\lambda_M)$ which is a type for $\Omega$.

\begin{defin} (cf. \cite{Shotton-n} definition 6.11) \\
A SZ-datum over $\Km$, is a set
$$\SF=\Bigl \{ (E_i,\beta_i,V_i,\BF_i,\lambda_i), ~i=1,\cdots,r \Bigr \}$$
where $r$ is a positive integer and for each $i=1,\cdots,r$ we have
\begin{itemize}
\item $E_i/F$ is a finite extension generated by an element $\beta_i \in E_i$,

\item $V_i$ is an $E_i$-vector space of finite dimension $N_i$,

\item $\BF_i \subset \End_{E_i}(V_i)$ is a maximal hereditary $\OC_{E_i}$-order,

\item let denote $\UF_i$ the associated $\OC_M$-order in $A_i:=\End_M(V_i)$ and
let $m_i:=-v_{E_i}(\beta_i)$. Then $[\UF_i,m_i,0,\beta_i]$ is a simple stratum and
$\lambda_i$ is an $\Km$-representation of $J_i:=J(\beta_i,\UF_i)$ of the form
$\kappa_i \otimes \nu_i$ where $\kappa_i$ is a $\beta_i$-extension of the representation
$\eta_i$ of $J^1_i:=J^1(\beta_i,\UF_i)$ containing some simple character
$\theta_i \in \CC(\UF_i,0,\beta_i)$ of $H^1_i=H^1(\beta_i,\UF_i)$ and $\nu_i$
is an irreducible representation of $U(\BF_i)/U^1(\BF_i) \simeq \GL_{N_i}(k_{E_i})$
over $\Km$;

\item no two of the $\theta_i$ are endo-equivalent in the sense of \cite{BK99} \S 4.
\end{itemize}
\end{defin}

Let $V=\bigoplus_{i=1}^r V_i$, $A=\End_M(V)$ and $G=\Aut_M(V)$.
The Levi subgroup $M=\prod_{i=1}^r \Aut_M(A_i) \subset G$ has compact open
subgroups $J^1_M \triangleleft J_M$ where $J^1_M=\prod_{i=1}^r J^1_i$
and similarly for $J_M$. Let denote by $\eta_M=\bigotimes_{i=1}^r \eta_i$, 
a representation of $J^1_M$ and similarly for the representations $\kappa_M$ and
$\lambda_M$ of $J_M$. Note that $\eta_M$ and $\kappa_M$ are clearly irreducible
and $\lambda_M$ is irreducible by \cite{Shotton-n} propositions 6.12.

Since no two of the $\theta_i$ are endo-equivalent, by \cite{BK99} \S 8,
see also \cite{MS14} proposition 2.28,
we have compact open subgroups $J$ and $J^1$ of $G$ and representations
$\eta$ of $J^1$, $\kappa$ of $J$ and $\lambda$ of $J$ such that
$(J^1,\eta)$ (resp. $(J,\kappa)$, resp. $(J,\lambda)$) is a
$G$-cover of $(J^1_M,\eta_M)$ (resp. $(J_M,\kappa_M)$, resp. $(J_M,\lambda_M)$)
and $J/J^1=J_M/J^1_M$ with $\lambda=\kappa \otimes (\bigotimes_{i=1}^r \nu_i)$
under this identification.

\begin{defin}
For $\SF$ an SZ-stratum and $J$ and $\lambda$ as above. Let $K$ be a maximal
compact subgroup of $G$ such that $J_M \subset K \cap M$. We then denote by
$$\sigma(\SF):=\ind_J^K(\lambda),$$
which by \cite{Shotton-n} theorem 6.16 is irreducible.
\end{defin}

Start now from $\PC \in \IC=\{ f:\IC_0 \longrightarrow \Part \}$ and let
$n=\sum_{\tau_0 \in \IC_0} \sum f(\tau_0) \dim \tau_0$. Let
$V$ be a $L$-vector space of dimension $N$ and let $G=\Aut_M(V)$.
Let $(M^0,\pi)$ be a supercuspidal pair in the inertial equivalence class
associated to the Bernstein component attached to $\PC$. Write
$M^0=\prod_{i=1}^t M_i^0$ where each $M_i^0$ is the stabiliser of some
$n_i$-dimensional subspace $V_i^0$ of $V$. Write $\pi=\bigotimes_{i=1}^t \pi_i$ and let
$\Omega_i$ be the supercuspidal Berstein component containing $\pi_i$. For each
$\Omega_i$ there is an associated endo-class of ps-character $\Theta_i^0=(\Theta_i^0,0,\beta_i^0)$. Let $M=\prod_{i=1}^r M_i$ the Levi subgroup
of $G$ obtaining from $M$ by gathering $M_j^0$ with $M_k^0$ if and only if
$\Theta_j^0=\Theta_k^0$: we then write simply $(\Theta_i,0,\beta_i)$ for the 
common value.

We attach to $\PC$ a SZ-datum 
$$\SF_\PC=\Bigl \{ (E_i,\beta_i,V_i,\BF_i,\kappa_i \otimes \nu_i ): i=1,\cdots,r \Bigr \}$$
as follows. Suppose first that $r=1$ and wrtie $(\Theta,0,\beta)$ for the common
value of $(\Theta_i^0,0,\beta_i^0)$.
\begin{itemize}
\item $E=L[\beta]$;

\item let $(J_i^0,\lambda_i^0)$ be the maximal simple type for the supercuspidal Berstein
component $\Omega_i^0$ with $J_i^0=J(\beta,UF_i^0)$ for a simple stratum
$[\UF_i^0,m_i^0,0,\beta_i^0]$ and $\lambda_i^0$ contains 
$\theta_i^0:=\Theta(\UF_i^0,0,\beta_i)$. As above we then 
have compact open subgroups $J^1 \subset J$ of $G$ and a representation $\eta$
of $J^1$ containing the simple character $\Theta(\UF,0,\beta)$ where $\UF$
is a hereditary $\OC_M$-order in $A$ and $\UC_i \cap B=\BF$ is a maximal hereditary 
$\OC_E$-order.

\item We choose compatible $\beta$-extensions, in the sense of \cite{Shotton-n} \S 6.6,
$\kappa_i^0$ of $\eta_i^0$ coming from a $\beta$-extension $\kappa$ of $\eta$, and
decompose each $\lambda_i^0$ as $\kappa_i^0 \otimes \nu_i^0$ where $\nu_i^0$
is a cuspidal representation of $J_i^0/J_i^{1,0}=U(\BF_i^0)/U^1(\BF_i^0) \simeq
\GL_{n_i^0/[E:L]}(k_E)$ for an integer $n_i^0$. Then $J/J^1 \simeq \GL_{n/[E:L]}(k_E)$
so that we can view each $\nu_i^0$ as an element of $\overline \IC_0$ and
define an element $\overline \PC \in \overline \IC$ by $\overline \PC(\nu_i^0)=\PC(\tau_i)$
where $\tau_i \in \IC_0$ corresponds to $\Omega_i$.

\item We then write $\nu=\sigma_{\overline \PC}$ a representation of
$\GL_{n/[E:L]}(k_E)$ and see it as a representation of $J/J^1$.

\item We repeat this construction for every $i=1,\cdots,r$.
\end{itemize}

\begin{nota} \label{nota-SZ}
For $\tau=\tau_\PC$ we write $\sigma(\tau):=\sigma(\SF_\PC)$.
\end{nota}

\begin{thm} (\cite{Shotton-n} 6.20) \label{thm-sigmamax} \\
Let $\PC' \in \IC$ with degree $n$ and let $(M',\pi')$ be any discrete pair in the
inertial equivalence class associated to $\PC'$. For any parabolic subgroup $Q'$ of $G$
with Levi $M'$, we have
$$\dim \Hom_K(\sigma(\SF_\PC),\ind_{Q'}^G \pi' ) =\prod_{\tau_0 \in \IC_0} 
m(\PC(\tau_0), \PC'(\tau_0)),$$
where for any two partition $\underline \lambda, \underline \mu$ of $n$, 
$m(\underline \lambda, \underline \mu)$ is the usual
Kostka number which counts the number of possible ways to fill the Young tableau
with lines of respective size $\lambda_1,\cdots$ with $\mu_1$ one, $\mu_2$ two and
so on, with the following properties: the sequence of labels in each columns is strictly
increasing, while it is increasing in each lines.
\end{thm}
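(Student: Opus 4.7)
The plan is to use Frobenius reciprocity together with Bushnell--Kutzko's theory of $G$-covers to reduce the question to a multiplicity computation for representations of finite general linear groups, where Kostka numbers naturally appear. Since $\sigma(\SF_\PC) = \ind_J^K \lambda$ by construction, Frobenius reciprocity yields
$$\Hom_K\bigl(\sigma(\SF_\PC), \ind_{Q'}^G \pi'\bigr) \cong \Hom_J\bigl(\lambda, (\ind_{Q'}^G \pi')|_J\bigr),$$
and the Iwasawa decomposition $G = Q' K$ combined with Mackey's theorem lets one analyse the right-hand side via double cosets in $J \backslash G / Q'$.

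Next, I would invoke the cover property of $(J, \lambda)$ over the maximal simple types that build it. Covers are compatible with parabolic induction in the sense that only the double coset adapted to the Levi decomposition contributes a non-zero summand, yielding an isomorphism with $\Hom_{J_{M'}}(\lambda_{M'}, \pi')$, where $(J_{M'}, \lambda_{M'})$ is the Levi type adapted to $\pi'$. Because no two of the simple characters $\theta_i$ in the SZ-datum $\SF_\PC$ are endo-equivalent, this Hom-space factorises as a product indexed by $\tau_0 \in \IC_0$; the $\tau_0$-factor vanishes unless $\tau_0$ appears on both sides, which is built into the hypothesis that $\pi'$ lies in the inertial class associated to $\PC'$.

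At each $\tau_0 \in \IC_0$, the factorisation $\lambda_i = \kappa_i \otimes \nu_i$, together with the fact that $\kappa_i$ intertwines uniquely (up to scalars) with the canonical $J_i^1$-isotypic component of the matching factor of $\pi'$, reduces the computation to the multiplicity of an irreducible representation of $\GL_{N_i}(k_{E_i})$ labelled by the partition $\PC(\tau_0)$ inside a Harish--Chandra induced representation whose cuspidal support is encoded by $\PC'(\tau_0)$. By Green's classification of irreducible representations of $\GL_n$ over a finite field and the standard combinatorics of Harish--Chandra induction from a cuspidal, this multiplicity is precisely the Kostka number $m(\PC(\tau_0), \PC'(\tau_0))$.

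The main obstacle is keeping the compatibility of $\beta$-extensions straight across the cover construction, so that the finite-group multiplicity on the residue side lifts without spurious twists to the $p$-adic multiplicity on the group side; this is exactly what the careful normalisations of \cite{BK99} and the compatible $\beta$-extension formalism built into the definition of an SZ-datum are designed to handle. Once this bookkeeping is established the theorem reduces to a purely combinatorial statement about the representation theory of finite general linear groups.
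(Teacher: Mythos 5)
The paper does not prove this theorem itself --- it is quoted from \cite{Shotton-n}, Theorem~6.20, which in turn follows Schneider--Zink --- so the comparison is against that known proof. Your outline correctly identifies the main ingredients: Frobenius reciprocity, Iwasawa/Mackey, the cover and $\beta$-extension formalism of \cite{BK99}, separation by endo-class, and a final reduction to Kostka multiplicities in $\GL_{N_i}(k_{E_i})$. The last step, invoking the classical combinatorics of Harish--Chandra induction from a cuspidal of the finite linear group, is right.

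The intermediate reduction is written in a way that cannot be correct as it stands. You assert an isomorphism with $\Hom_{J_{M'}}(\lambda_{M'}, \pi')$, with $(J_{M'}, \lambda_{M'})$ described as ``the Levi type adapted to $\pi'$.'' If $\lambda_{M'}$ is determined by $\pi'$, and hence by $\PC'$, this Hom space forgets $\PC$, whereas the target $\prod_{\tau_0} m(\PC(\tau_0), \PC'(\tau_0))$ depends essentially on both partitions. The cover $(J,\lambda)$ arising from the SZ-datum for $\PC$ is a $G$-cover of $(J_M, \lambda_M)$ for the Levi $M$ obtained by gathering endo-classes, not a cover of a type built from $\pi'$ on the finer discrete Levi $M'$; there is no canonical ``Levi type adapted to $\pi'$'' sitting inside $J$. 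The step that actually works is the one you describe afterwards: after separating endo-classes and stripping the $\kappa_i$-factor (which contributes multiplicity one via its unique $J_i^1$-isotypy), the Hom space becomes the multiplicity of the finite-group representation $\nu_i$ of $J_i/J_i^1 \simeq \GL_{N_i}(k_{E_i})$, carrying the partition $\PC(\tau_0)$, inside a Harish--Chandra induction whose cuspidal support records $\PC'(\tau_0)$; both partitions are visible there and the Kostka number drops out. If you replace the $\Hom_{J_{M'}}(\lambda_{M'},\pi')$ clause by a direct passage from $\Hom_{J}(\kappa\otimes\nu,\,\cdot\,)$ to this finite-group multiplicity, the argument becomes coherent and matches the known proof.
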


Note that
\begin{itemize}
\item if $\underline \lambda <\underline \mu$ then 
$m(\underline \lambda, \underline \mu)=0$;

\item $m(\underline \lambda, \underline \lambda)=1$;

\item $m((n), \underline \mu)=1$.
\end{itemize}
In particular if $\PC$ is maximal that is if for every $\tau_0 \in \IC_0$ we have
$\PC(\tau_0)=(\sum \PC(\tau_0))$, then the multiplicity of $\sigma(\SF_\PC)$
in $\ind_{Q'}^G \pi'$ is $1$ if $\sum \PC'(\tau_0)=\sum \PC(\tau_0)$ for
every $\tau_0 \in \IC_0$, otherwise it is $0$.

Let $\SF=\Bigl \{ (E_i,\beta_i,V_i,\BF_i,\lambda_i); i=1,\cdots,r \Bigr \}$ be an
SZ-datum. Write $\lambda_i=\kappa_i \otimes \nu_i$ where the $\nu_i$ are
irreducible representations of $J_i/J^1_i$. Note that the modulo $l$
reduction $\overline \kappa_i$ of $\kappa_i$
remains irreducible. We then decompose the semisimplification of the
modulo $l$ reduction of $\nu_i$:
$$\overline \nu_i^{ss}=\bigoplus_{j \in S_i} \mu_{i,j} \nu_{i,j}$$
where $S_i$ is some finite indexing set, $\nu_{i,j}$ are distinct irreducible 
$\overline \Fm_l$-representations and $\mu_{i,j} \in \Nm$. For
$\underline j=(j_1,\cdots,j_r) \in S_1 \times \cdots \times S_r$, define an SZ-datum 
$\SF_{\underline j}$ over $\overline \Fm_l$ by
$$\SF_{\underline j}=\Bigl \{ (E_i,\beta_i,V_i,\BF_i,\overline \kappa_i \otimes \nu_{i,j_i}); i=1,\cdots,r \Bigr \}.$$

\begin{thm} (cf \cite{Shotton-n} 6.23) \\
The semisimplified modulo $l$ reduction of $\sigma(\SF)$ is
$$\bigoplus_{\underline j \in S_1 \times \cdots \times S_r} \mu_{\underline j} \sigma(\SF_{\underline j}),$$
where $\mu_{\underline j}:=\prod_{i=1}^r \mu_{i,j_i}$.
\end{thm}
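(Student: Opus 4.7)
The plan is to reduce the statement to a finite-group tensor product calculation and then exploit the exactness of compact induction.

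First I would unwind the construction. By the definition preceding Notation~\ref{nota-SZ}, $\sigma(\SF)=\ind_J^K(\lambda)$ with $\lambda=\kappa\otimes(\nu_1\otimes\cdots\otimes\nu_r)$, where $\kappa$ is a $\beta$-extension of an irreducible representation $\eta$ of $J^1$ and the $\nu_i$ are inflated along $J/J^1\simeq\prod_{i=1}^r J_i/J_i^1\simeq\prod_{i=1}^r\GL_{N_i}(k_{E_i})$. Since $J^1$ is pro-$p$ and $l\ne p$, every irreducible $\overline{\Qm}_l$-representation of $J^1$ admits an integral model whose modulo $l$ reduction is irreducible, so $\overline\eta$ is irreducible; because $\kappa$ extends $\eta$ to $J$ without enlarging the dimension, the restriction $\overline\kappa|_{J^1}=\overline\eta$ is already irreducible, forcing $\overline\kappa$ itself to be irreducible. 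Moreover $\overline\kappa$ remains a $\beta$-extension in the sense of \cite{Shotton-n} \S 6, compatible with each $\overline\kappa_i$ entering the data $\SF_{\underline j}$.

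Second, I would compute the class of $\overline\lambda$ in the Grothendieck group $R(J)$ of finite-dimensional $\overline{\Fm}_l$-representations of $J$. Because the tensor product is biexact on representations over a field, semisimplification is a ring map on $R(J)$, so
\[
[\overline\lambda]=[\overline\kappa]\cdot\prod_{i=1}^r[\overline\nu_i]=\sum_{\underline j\in S_1\times\cdots\times S_r}\mu_{\underline j}\,\bigl[\overline\kappa\otimes\nu_{1,j_1}\otimes\cdots\otimes\nu_{r,j_r}\bigr],
\]
with $\mu_{\underline j}=\prod_i\mu_{i,j_i}$, using the given decompositions $[\overline\nu_i]=\sum_j\mu_{i,j}[\nu_{i,j}]$ and the fact that the $\nu_i$ live on distinct factors of $J/J^1$.

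Third, since $J$ is open in the compact group $K$, the functor $\ind_J^K$ is exact and descends to a ring map $R(J)\to R(K)$, giving
\[
[\sigma(\SF)\bmod l]=\sum_{\underline j}\mu_{\underline j}\,\bigl[\ind_J^K\!\bigl(\overline\kappa\otimes\nu_{1,j_1}\otimes\cdots\otimes\nu_{r,j_r}\bigr)\bigr]
\]
in $R(K)$. To finish, I would recognize each summand as $\sigma(\SF_{\underline j})$: by construction $\SF_{\underline j}=\{(E_i,\beta_i,V_i,\BF_i,\overline\kappa_i\otimes\nu_{i,j_i})\}$, and the cover procedure of \cite{BK99} applied over $\overline{\Fm}_l$ produces the very same pair $(J,J^1)$ together with the representation $\overline\kappa\otimes(\bigotimes_i\nu_{i,j_i})$; the irreducibility of its induction to $K$ is the $\overline{\Fm}_l$-analogue of \cite{Shotton-n} Theorem~6.16.

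The main obstacle is the first step, namely showing that a single $\beta$-extension $\kappa$ can be chosen with an integral structure whose reduction $\overline\kappa$ simultaneously serves as the $\beta$-extension entering each $\SF_{\underline j}$, and that the $\overline{\Fm}_l$-cover construction attached to $\SF_{\underline j}$ yields exactly $(J,\overline\kappa\otimes\bigotimes_i\nu_{i,j_i})$. This ultimately rests on the pro-$p$ structure of $J^1$ together with $l\ne p$ (so irreducible representations lift bijectively between $\overline{\Qm}_l$ and $\overline{\Fm}_l$), and on the uniqueness of $\beta$-extensions up to twist by characters of $J/J^1$, the control of these twists modulo $l$ being the technical content of \cite{Shotton-n} \S 6.6.
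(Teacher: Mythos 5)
The paper does not prove this theorem; it simply cites \cite{Shotton-n}, Theorem 6.23, so there is no internal proof to compare against. Your reconstruction is the right one and matches the expected argument: the decomposition map $d\colon R_{\overline{\Qm}_l}(J)\to R_{\overline{\Fm}_l}(J)$ is additive and tensor-compatible by Brauer--Nesbitt, $\overline\kappa$ is irreducible because $J^1$ is pro-$p$ with $l\neq p$ and $\overline\kappa|_{J^1}=\overline\eta$, and exactness of $\ind_J^K$ (a group homomorphism $R(J)\to R(K)$, not a ring map, but you only use additivity) transports the resulting Grothendieck-group identity to $R(K)$. The one point you flag but do not prove --- that the reduction $\overline\kappa$ of a compatible $\beta$-extension is itself the compatible $\beta$-extension entering each $\SF_{\underline j}$, so that $\ind_J^K(\overline\kappa\otimes\bigotimes_i\nu_{i,j_i})$ is literally $\sigma(\SF_{\underline j})$ --- is indeed the genuine technical content of the cited result, handled in \cite{Shotton-n} \S 6.6; since you identify it explicitly and cite where it is resolved, this is a fair deferral rather than a gap. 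In short, the proposal is correct and follows the same route the reference does.
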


\subsection{Kottwiz--Harris--Taylor Shimura varieties}
\label{subsec-KHT}

Let $F=F^+ E$ be a CM field where $E/\Qm$ is a quadratic imaginary extension and 
$F^+/\Qm$ is
totally real. 
We fix a real embedding $\tau:F^+ \hookrightarrow \Rm$. For a place $v$ of $F$,
we will denote by $F_v$ the completion of $F$ at $v$,
$\mathcal{O}_v$ its ring of integers with uniformizer $\varpi_v$ and 
residue field $\kappa(v)=\mathcal{O}_v/(\varpi_v)$ of cardinal $q_v$.

Let $B$ be a division algebra with center $F$, of dimension $d^2$ such that at every place $v$ of $F$,
either $B_v$ is split or a local division algebra and suppose $B$ provided with an involution of
second kind $*$ such that $*_{|F}$ is the complex conjugation. For any
$\beta \in B^{*=-1}$, denote by $\sharp_\beta$ the involution $v \mapsto v^{\sharp_\beta}=\beta v^*
\beta^{-1}$ and let $G/\Qm$ be the group of similitudes, denoted by 
$G_\tau$ in \cite{harris-taylor}, defined for every $\Qm$-algebra $R$ by 
$$
G(R)  \simeq   \{ (\lambda,g) \in R^\times \times (B^{op} \otimes_\Qm R)^\times  \hbox{ such that } 
gg^{\sharp_\beta}=\lambda \}
$$
with $B^{op}=B \otimes_{F,c} F$. 
If $x$ is a place of $\Qm$ split $x=yy^c$ in $E$ then 
\begin{equation} \label{eq-facteur-v}
G(\Qm_x) \simeq (B_y^{op})^\times \times \Qm_x^\times \simeq \Qm_x^\times \times
\prod_{v^+_i} (B_{v^+_i}^{op})^\times,
\end{equation}
where $x=\prod_i v^+_i$ in $F^+$ and we identify places of $F^+$ over $x$ with places of $F$ over $y$.

\begin{conv} 
For $x=yy^c$ a place of $\Qm$ split in $M$ and $v$ 
a place of $F$ over $y$, we shall make throughout the text the following abuse of notation: we denote 
$G(F_v)$ the factor $(B_{v|_{F^+}}^{op})^\times$ in the formula (\ref{eq-facteur-v})
so that 
$$G(\Am_\Qm^{\oo,v}):=G(\Am_\Qm^{\oo,p}) \times \Bigl ( 
\Qm_p^\times \times \prod_{v_i^+ \neq v|F^+} (B_{v_i^+}^{op})^\times \Bigr ).$$
\end{conv}

In \cite{harris-taylor}, the authors justify the existence of some $G$ like before such that 
\begin{itemize}
\item if $x$ is a place of $\Qm$ non split in $M$ then $G(\Qm_x)$ is quasi split;

\item the invariants of $G(\Rm)$ are $(1,d-1)$ for the embedding $\tau$ and $(0,d)$ for the others.
\end{itemize}

As in \cite[page 90]{harris-taylor}, a compact open subgroup $K$ of $G(\Am^\oo_{\Qm})$ is said to be
\emph{sufficiently small}
if there exists a place $x$ of $\Qm$ such that the projection from $K^x$ to $G(\Qm_x)$ does not contain any 
element of finite order except identity.

\begin{nota}
Denote by $\mathcal{K}$ the set of sufficiently small compact open subgroups of $G(\Am^\oo)$.
For $K \in \mathcal{K}$, write $\Sh_{K,\eta} \longrightarrow \Spec F$ for the associated
Shimura variety of Kottwitz-Harris-Taylor type.
\end{nota}

\begin{defin} \label{defi-spl}
Denote by $\spl$ the set of  places $w$ of $F$ such that $p_w:=w_{|\Qm} \neq l$ is split in $E$ and
$B_w^\times \simeq \GL_d(F_w)$.  For each $K \in \mathcal{K}$, we write
$\spl(K)$ for the subset of $\spl$ of places such that $K_v$ is the 
standard maximal compact of $\GL_d(F_v)$.
\end{defin}

In the sequel, we fix a place $v$ of $F$ in $\spl$. 
The scheme $\Sh_{K,\eta}$ has a projective model $\Sh_{K,v}$ over 
$\Spec \mathcal{O}_v$
with special geometric fiber $\Sh_{K,\bar s_v}$. 
We have a projective system $(\Sh_{K,\bar s_v})_{K\in \mathcal{K}}$ 
which is naturally equipped with an action of $G(\Am^\oo_{\Qm}) \times \Zm$ such that any
$w_v\in W_{F_v}$ acts by $-\deg (w_v) \in \Zm$,
where $\deg=\val \circ \art_{F_v}^{-1}$ and $\art_{F_v}: F_v^\times\stackrel{\sim}{\ra} W_{F_v}^{ab}$.

\begin{nota} 
For $K \in \mathcal{K}$, the Newton stratification of the geometric special fiber $\Sh_{K,\bar s_v}$ is denoted by
$$\Sh_{K,\bar s_v}=:\Sh^{\geq 1}_{K,\bar s_v} \supset \Sh^{\geq 2}_{K,\bar s_v} \supset \cdots \supset 
\Sh^{\geq d}_{K,\bar s_v}$$
where $\Sh^{=h}_{K,\bar s_v}:=\Sh^{\geq h}_{K,\bar s_v} - \Sh^{\geq h+1}_{K,\bar s_v}$ is an affine 
scheme, which is smooth and pure of dimension $d-h$.
It is built up by the geometric 
points such that the connected part of the associated Barsotti--Tate group has rank $h$
For each $1 \leq h <d$, write
$$i_{h}:\Sh^{\geq h}_{K,\bar s_v} \hookrightarrow \Sh^{\geq 1}_{K,\bar s_v}, \quad
j^{\geq h}: \Sh^{=h}_{K,\bar s_v} \hookrightarrow \Sh^{\geq h}_{K,\bar s_v},$$
and $j^{=h}=i_h \circ j^{\geq h}$.
\end{nota}

For $n \geq 1$, with our previous abuse of notation, consider $K^v(n):=K^vK_v(n)$ where
$$K_v(n):=\ker(\GL_d(\OC_v) \twoheadrightarrow \GL_d(\OC_v/\cM_v^n)).$$
Recall that $\Sh_{I^v(n),\bar s_v}^{=h}$ is geometrically induced
under the action of the parabolic subgroup $P_{h,d}(\OC_v/\cM_v^n)$, 
defined as the stabilizer of the first $h$ vectors of the canonical basis of $F_v^d$. 
Concretely this means there exists a closed subscheme 
$\Sh_{K^v(n),\bar s_v,1}^{=h}$ stabilized by the Hecke 
action of $P_{h,d}(F_v)$ and such that
\begin{equation} \label{eq-induction}
\Sh_{K^v(n),\bar s_v}^{=h} = \Sh_{K^v(n),\bar s_v,1}^{=h} 
\times_{P_{h,d}(\OC_v/\cM_v^n)} \GL_d(\OC_v/\cM_v^n),
\end{equation}
meaning that $\Sh_{K^v(n),\bar s_v}^{=h} $ is the disjoint union of copies of
$\Sh_{K^v(n),\bar s_v,1}^{=h}$ indexed by 
$\GL_d(\OC_v/\cM_v^n)/P_{h,d}(\OC_v/\cM_v^n)$ and  exchanged by the action of
$\GL_d(\OC_v/\cM_v^n)$. We will denote by $\Sh^{\geq h}_{K^v(n),\bar s_v,1}$
the closure of $\Sh^{=h}_{K^v(n),\bar s_v,1}$ inside $\Sh_{K^v(n),\bar s_v}$.
 
 \begin{nota} 
Let $1 \leq g \leq d$ and $\pi_v$ be an irreducible cuspidal representation of
$\GL_g(F_v)$. For $1 \leq t \leq s:=\lfloor d/g \rfloor$, let $\Pi_t$ any representation of 
 $\GL_{tg}(F_v)$. We denote by
$$\widetilde{HT}_1(\pi_v,\Pi_t):=\cL(\pi_v,t)_{1} 
\otimes \Pi_t^{K_v(n)} \otimes \Xi^{\frac{t-s}{2}}$$ 
the Harris-Taylor local system on the Newton stratum 
$\Sh^{=tg}_{K^v(n),\bar s_v,1}$ where 
\begin{itemize}
\item $\cL(\pi_v,t)_{1}$ is the $\overline \Zm_l$-local system 
given by Igusa varieties of  \cite{harris-taylor} and associated to the 
representation $\pi_v[t]_D$ of the division algebra  $D_{v,tg}/F_v$ with invariant 
$1/tg$, corresponding through Jacquet-Langlands correspondance
to $\st_t(\pi_v^\vee)$: cf. \cite{boyer-invent2} \S 1.4 for more details;

\item $\Xi:\frac{1}{2} \Zm \longrightarrow \overline \Zm_l^\times$ is 
defined by $\Xi(\frac{1}{2})=q^{1/2}$.
\end{itemize}
We also introduce the induced version
$$\widetilde{HT}(\pi_v,\Pi_t):=\Bigl ( \cL(\pi_v,t)_{1} 
\otimes \Pi_t^{K_v(n)} \otimes \Xi^{\frac{t-s}{2}} \Bigr) 
\times_{P_{tg,d}(\OC_v/\cM_v^n)} \GL_d(\OC_v/\cM_v^n),$$
where the unipotent radical of $P_{tg,d}(\OC_v/\cM_v^n)$ 
acts trivially and the action of
$$(g^{\oo,v},\left ( \begin{array}{cc} g_v^c & * \\ 0 & g_v^{et} \end{array} \right ),\sigma_v) 
\in G(\Am^{\oo,v}) \times P_{tg,d}(\OC_v/\cM_v^n) \times W_v$$ 
is given
\begin{itemize}
\item by the action of $g_v^c$ on $\Pi_t^{K_v(n)}$ and 
$\deg(\sigma_v) \in \Zm$ on $ \Xi^{\frac{t-s}{2}}$, and

\item the action of $(g^{\oo,v},g_v^{et},\val(\det g_v^c)-\deg \sigma_v)
\in G(\Am^{\oo,v}) \times \GL_{d-tg}(\OC_v/\cM_v^n) \times \Zm$ on 
$\cL_{\overline \Qm_l} (\pi_v)_{1} \otimes \Xi^{\frac{t-s}{2}}$.
\end{itemize}
We also introduce
$$HT(\pi_v,\Pi_t)_{1}:=\widetilde{HT}(\pi_v,\Pi_t)_{1}[d-tg],$$
and the perverse sheaf
$$P(t,\pi_v)_{1}:=\lexp p j^{=tg}_{1,!*} HT(\pi_v,\St_t(\pi_v))_{1} 
\otimes L_g(\pi_v)^\vee,$$
and their induced version, $HT(\pi_v,\Pi_t)$ and $P(t,\pi_v)$,
where $L_g(\pi_v)^\vee$ is the contragredient of the representation of
dimension $g$ of $\gal(\overline F_v/F_v)$
associated to $\pi_v$ by the Langlands correspondence $L_g$.
\end{nota}

\noindent \textbf{Important property}:
over $\overline \Zm_l$, there are at least 
two notions of intermediate extension associated to 
the two classical $t$-structures $p$ and $p+$.
By proposition 2.4.1 of \cite{boyer-duke}, in the limit case where
all $\overline \Fm_l$-cuspidal representations are supercuspidal, as recalled
after proposition \ref{prop-red-modl},
all the $p$ and $p+$
intermediate extensions of Harris-Taylor local systems coincide.
The arguments in loc. cit. are rather difficult but if one accepts to restrict to the case
where the irreducible constituants of $\overline \rho_{\mathfrak m}$ are all
characters, then the proof of this fact is easy.
Indeed as $\Sh^{\geq h}_{K^v(n),\bar s_v,1}$ is smooth over 
$\Spec  \overline \Fm_p$, then $HT(\chi_v,\Pi_h)_{1}$ is perverse for the two 
$t$-structures with
$$i^{h \leq +1,*}_{1} HT(\chi_v,\Pi_h)_{1} \in \lexp p \DC^{< 0} \hbox{ and }
i^{h \leq +1,!}_{1} HT(\chi_v,\Pi_h)_{1} \in \lexp {p+} \DC^{\geq 1}.$$

Let now denote by 
$$\Psi_{K,v}:=R\Psi_{\eta_v}(\overline \Zm_l[d-1])(\frac{d-1}{2})$$
the nearby cycles autodual free perverse sheaf on $\Sh_{K,\bar s_v}$.
Recall, cf. \cite{boyer-duke} proposition 3.1.3, that 
\begin{equation} \label{eq-psi-split}
\Psi_{K,v} \simeq \bigoplus_{1 \leq g \leq d} \bigoplus_{\varrho \in \scusp(g)} 
\Psi_{K,\varrho},
\end{equation}
where 
\begin{itemize}
\item $\scusp(g)$ is the set of equivalence classes
of irreducible supercuspidal $\overline \Fm_l$-representations of $\GL_g(F_v)$.

\item The irreducible sub-quotients
of $\Psi_{K,\varrho} \otimes_{\overline \Zm_l} \overline \Qm_l$ 
are the Harris-Taylor perverse sheaves of
$\Psi_{K,\overline \Qm_l}$ associated to irreducible cuspidal representations
$\pi_v$ with modulo $l$ reduction having supercuspidal support a Zelevinsky
segment associated to $\varrho$.
\end{itemize}

In the limit case when $q_v \equiv 1 \mod l$ and $l>d$, recall that 
we do not have to bother 
about cuspidal $\overline \Fm_l$-representation which are not supercuspidal. 
In particular in the previous formula we can 
\begin{itemize}
\item replace $\scusp(g)$ by the set $\cusp(g)$ of
equivalence classes of cuspidal representations,

\item and the Harris-Taylor perverse sheaves of 
$\Psi_{K,\varrho} \otimes_{\overline \Zm_l} \overline \Qm_l$
are those associated to $\pi_v$ such that its modulo $l$ reduction is isomorphic
to $\varrho$.
\end{itemize}

\section{Nearby cycles and filtrations}

\subsection{Filtrations of stratification of $\Psi_\varrho$}
\label{para-exchange}

We now fix an irreducible $\overline \Fm_l$-cuspidal representation $\varrho$
of $\GL_g(F_v)$ for some $1 \leq g \leq d$. We also introduce $s=\lfloor d/g \rfloor$.

Using the Newton stratification and following the constructions of \cite{boyer-torsion},
we can define a $\overline \Zm_l$-filtration 
$$\Fil_!^0(\Psi_{K,\varrho}) \hookrightarrow \cdots \hookrightarrow 
\Fil_!^s(\Psi_{K, \varrho}) =\Psi_{K,\varrho}$$
where $\Fil^t_!(\Psi_{K,\varrho})$ is the saturated image of
$j^{=tg}_! j^{=tg,*} \Psi_{K,\varrho} \longrightarrow \Psi_{K,\varrho}$. 
We also denote by $\coFil_!^t(\Psi_\varrho):=\Psi_\varrho/\Fil^t_!(\Psi_\varrho)$.
Dually we can define a  cofiltration
$$\Psi_{K,\varrho}=\coFil_*^s (\Psi_{K,\varrho}) \twoheadrightarrow \cdots
\twoheadrightarrow \coFil_*^1 (\Psi_{K,\varrho})$$
where $\coFil_*^t (\Psi_{K,\varrho})$ is the saturated image of
$\Psi_{K,\varrho} \longrightarrow j^{=tg}_* j^{=tg,*} \Psi_{K,\varrho}$: cf. figure
\ref{fig-psi-filtration} for an illustration. We denote by
$\Fil^t_*(\Psi_\varrho):=\ker (\Psi_\varrho \twoheadrightarrow \coFil^t_*(\Psi_\varrho))$.

\begin{figure}[htbp]
\begin{center}
\newrgbcolor{xdxdff}{0.49 0.49 1}
\newrgbcolor{ududff}{0.30 0.30 1}
\psset{xunit=1cm,yunit=1cm,algebraic=true,dimen=middle,dotstyle=o,dotsize=5pt 0,linewidth=2pt,arrowsize=3pt 2,arrowinset=0.25}
\begin{pspicture*}(-10.37,-3.87)(4.25,4.87)
\rput{45.72}(-7.56,1.39){\psellipse[linewidth=2pt](0,0)(2.85,0.595)}
\rput{44.44}(-6.99,0.14){\psellipse[linewidth=2pt](0,0)(2.226,0.427)}
\rput{44.443}(-6.99,0.14){\psellipse[linewidth=2pt](0,0)(2.226,0.4275)}
\rput{46.157}(-6.62,-1.56){\psellipse[linewidth=2pt](0,0)(1.457,0.4035)}
\rput[tl](-9.4,3){$j^{=g}_! j^{=g,*} \twoheadrightarrow$}
\rput[tl](-10.2,-0.4){$\gr^1_!$}
\rput[tl](-9.3,-1.33){$\gr^2_!$}
\rput[tl](-8.3,-2.57){$\gr^3_!$}
\psline[linewidth=2pt](-8,1)(-6,3)
\psline[linewidth=2pt](-7,0)(-6,1)
\rput[tl](-6.5,2.5){$\leadsto \Fil^1_*(\gr^1_!)$}
\rput[tl](-6.33,0.7){$\leadsto \Fil^2_*(\gr^2_1)$}
\rput{-45.1465}(-0.46,-1.49){\psellipse[linewidth=2pt](0,0)(2.835,0.63)}
\rput{-46.1852}(-0.01,0.01){\psellipse[linewidth=2pt](0,0)(2.1142,0.5129)}
\rput{-42.922}(0.47,1.56){\psellipse[linewidth=2pt](0,0)(1.447,0.479)}
\psline[linewidth=2pt](-1,-1)(1,-3)
\psline[linewidth=2pt](0,0)(1,-1)
\rput[tl](-3.23,-2){$j^{=g}_* j^{=g,*} \hookleftarrow$}
\rput[tl](-3.41,1.1){$\cogr^1_*$}
\rput[tl](-2.49,2.2){$\cogr^2_*$}
\rput[tl](-1.5,3.1){$\cogr^3_*$}
\rput[tl](0.49,-1.81){$\leadsto \coFil^1_!(\cogr^1_*)$}
\rput[tl](0.67,0.01){$\leadsto \coFil^2_!(\cogr^2_*)$}
\psline[linewidth=2pt](-3.75,3.79)(-3.75,-3.45)
\begin{scriptsize}
\psdots[dotstyle=*,linecolor=xdxdff](-9,0)
\psdots[dotstyle=*,linecolor=ududff](-8,1)
\psdots[dotstyle=*,linecolor=ududff](-7,2)
\psdots[dotstyle=*,linecolor=ududff](-6,3)
\psdots[dotstyle=square*,dotscale=2 2,linecolor=ududff](-8,-1)
\psdots[dotstyle=*,linecolor=xdxdff](-7,0)
\psdots[dotstyle=*,linecolor=ududff](-6,1)
\psdots[dotstyle=*,linecolor=ududff](-7,-2)
\psdots[dotstyle=*,linecolor=ududff](-6,-1)
\psdots[dotstyle=*,linecolor=ududff](-6,-3)
\psdots[dotstyle=*,linecolor=xdxdff](-2,0)
\psdots[dotstyle=*,linecolor=ududff](-1,1)
\psdots[dotstyle=*,linecolor=xdxdff](0,2)
\psdots[dotstyle=*,linecolor=ududff](1,3)
\psdots[dotstyle=square*,dotscale=2 2,linecolor=ududff](-1,-1)
\psdots[dotsize=4pt 0,dotstyle=*,linecolor=darkgray](0,0)
\psdots[dotstyle=*,linecolor=ududff](1,1)
\psdots[dotstyle=*,linecolor=xdxdff](0,-2)
\psdots[dotstyle=*,linecolor=ududff](1,-1)
\psdots[dotstyle=*,linecolor=ududff](1,-3)
\end{scriptsize}
\end{pspicture*}
\end{center}
\caption{\label{fig-psi-filtration} Filtrations of stratification of $\Psi_{K,\varrho}$}
\end{figure}

Over $\overline \Qm_l$, the filtration $\Fil^\bullet_!(\Psi_{K,\varrho})$
coincides with the iterated kernel of $N_v$,
i.e. $\Fil^t_!(\Psi_\varrho) \otimes_{\overline \Zm_l} \overline \Fm_l \simeq
\ker (N_v^t \otimes_{\overline \Zm_l} \overline \Fm_l)$.
Dually the cofiltration $\coFil^\bullet_!(\Psi_{K,\varrho})$ coincides 
with the iterated image of $N_v$,
i.e. the kernel of $\Psi_{K,\varrho} \twoheadrightarrow \coFil_*^t (\Psi_{K,\varrho})$
is the image of $N_v^t$.  Note that by Grothendieck-Verdier duality, 
we have $D(\Fil^t_!(\Psi_{K,\varrho})) \simeq \coFil^t_*(\Psi_{K,\varrho})$.

The graded parts $\gr^t_!(\Psi_{K,\varrho})$ are, by construction, free and 
admit a strict\footnote{meaning the graded parts are free} filtration, cf. \cite{boyer-torsion} corollary 3.4.5
$$\Fil^{s-1}_*(\gr^k_!(\Psi_{K,\varrho})) \hookrightarrow \cdots \hookrightarrow
\Fil^{k-1}_*(\gr^k_!(\Psi_{K,\varrho}))=\gr^k_!(\Psi_{K,\varrho})$$
with 
$$\gr^{i-1}_*(\gr^k_!(\Psi_{K,\varrho})) \otimes_{\overline \Zm_l} \overline \Qm_l
\simeq  \bigoplus_{\pi_v \in \cusp(\varrho)}P(i,\pi_v)(\frac{i+1-2k}{2}),$$
where $\cusp(\varrho)$ is the set of equivalence classes of irreducible cuspidal
representations with modulo $l$ reduction isomorphic to $\varrho$. 

Dually, $\cogr^k_*(\Psi_{K,\varrho})$ has a cofiltration
$$\cogr^k_*(\Psi_{K,\varrho})=\coFil^{k-1}_! (\cogr^k_*(\Psi_{K,\varrho})) 
\twoheadrightarrow
\cdots \twoheadrightarrow \coFil^{s-1}_! (\cogr^k_*(\Psi_{K,\varrho})),$$
with 
$$\cogr^{i-1}_! (\cogr^k_*(\Psi_{K,\varrho}))\otimes_{\overline \Zm_l} \overline \Qm_l
\simeq  \bigoplus_{\pi_v \in \cusp(\varrho)}P(i,\pi_v)(\frac{2k-i-1}{2}).$$

Concerning the $\overline \Zm_l$-structures, cf. the third global
result of the introduction of \cite{boyer-duke} , for every 
$1 \leq k \leq i \leq s$, we have strict epimorphisms\footnote{strict means that the cokernel is torsion free}
$$j^{=ig}_! j^{=ig,*} \Fil^{i-1}_*(\gr^k_!(\Psi_{K,\varrho})) \twoheadrightarrow
\Fil^{i-1}_*(\gr^k_!(\Psi_{K,\varrho}))$$
as well as strict monomorphisms
$$\coFil^{i-1}_!(\cogr^k_*(\Psi_{K,\varrho})) \hookrightarrow j^{=ig}_* j^{=ig,*}
\coFil^{i-1} (\cogr^k_*(\Psi_{K,\varrho})).$$

\noindent \textit{Exchange basic step}:   
to go from one filtration to another, one can repeat the following process to exchange
the order of appearance of two consecutive subquotient:
$$\xymatrix{
& P_1' \ar@{^{(}->}[d] \ar@{^{(}->}[dr] \\
P_2  \ar@{^{(}->}[dr] \ar@{^{(}->}[r] & X \ar@{->>}[d] \ar@{->>}[r] & P_1 \ar@{->>}[dr]  \\
& P_2' \ar@{->>}[dr] & & T \\
& & T, \ar@{=}[ur]
}$$
where 
\begin{itemize}
\item $P_1$ and $P_2$ are two consecutive subquotient in a given filtration
and $X$ is the subquotient gathering them as a subquotient of this filtration.

\item Over $\overline \Qm_l$, the extension $X \otimes_{\overline \Zm_l} \overline \Qm_l$
is split, so that on can write $X$ as an extension of $P'_2$ by $P'_1$ with
$P'_1 \hookrightarrow P_1$ and $P_2 \hookrightarrow P'_2$ have the same cokernel
$T$, a perverse sheaf of torsion.
\end{itemize}

\rem In the particular case when $P_1$ and $P_2$ are intermediate extensions 
of local systems 
living on respective strata of index $h_1$ and $h_2$ with $h_1 \neq h_2$, 
such that the two associated intermediate extensions
for the $p$ and $p+$ $t$-structure are isomorphic, then $T$ is necessary zero and
$X$ is then split over $\overline \Zm_l$. Indeed  if $T$ was not zero, then 
seen as a quotient of $P_1$
(resp. $P'_2$) it has to be supported on the $\sh^{\geq h_1}_{K,\bar s_v}$
(resp.  $\sh^{\geq h_1}_{K,\bar s_v}$) with 
$j^{=h_1,*} T \neq 0$ (resp. $j^{=h_2,*} T \neq 0$): the two conditions are 
then incompatible.

\subsection{The canonical filtration of 
$H^0(\sh_{K,\bar s_v},\Psi_{K,\overline \Zm_l})_{\mathfrak m}$ is strict}
\label{para-filt-strict}

Let us first recall some known facts about irreducible algebraic representations of $G$,
see for example \cite{harris-taylor} p.97. Let $\sigma_0:E \hookrightarrow
\overline{\Qm}_l$ be a fixed embedding and et write $\Phi$ the set of embeddings 
$\sigma:F \hookrightarrow \overline \Qm_l$ whose restriction to $E$ equals $\sigma_0$.
There exists then an explicit bijection between irreducible algebraic representations $\xi$ of $G$ 
over $\overline \Qm_l$ and $(d+1)$-uple
$\bigl ( a_0, (\overrightarrow{a_\sigma})_{\sigma \in \Phi} \bigr )$
where $a_0 \in \Zm$ and for all $\sigma \in \Phi$, we have $\overrightarrow{a_\sigma}=
(a_{\sigma,1} \leq \cdots \leq a_{\sigma,d} )$.

For $L \subset \overline \Qm_l$ a finite extension of $\Qm_l$ such that the representation
$\iota^{-1} \circ \xi$ of highest weight
$\bigl ( a_0, (\overrightarrow{a_\sigma})_{\sigma \in \Phi} \bigr )$,
is defined over $L$, write $W_{\xi,L}$ the space of this representation and $W_{\xi,\OC_L}$
a stable lattice under the action of the maximal open compact subgroup $G(\Zm_l)$, 
where $\OC_L$ is the ring of integers of $L$ with uniformizer $\varpi_L$.

\rem if $\xi$ is supposed to be $l$-small, in the sense that for all $\sigma \in \Phi$ and all
$1 \leq i < j \leq n$ we have $0 \leq a_{\tau,j}-a_{\tau,i} < l$, then such a stable lattice is unique
up to a homothety.

\begin{nota} \label{nota-Vxi}
We will denote by $\LC_{\xi,\OC_L/\varpi_L^n}$ the local system on $\sh_{I,v}$ as well as
$$\LC_{\xi,\OC_L}=\lim_{\atop{\longleftarrow}{n}} \LC_{\xi,\OC_L/\varpi_L^n} \quad \hbox{and} 
\quad \LC_{\xi,L}=\LC_{\xi,\OC_L} \otimes_{\OC_L} L.$$
For $\overline \Zm_l$ and $\overline \Qm_l$ version, we will write respectively
$\LC_{\xi,\overline \Zm_l}$ and $\LC_{\xi,\overline \Qm_l}$. 
\end{nota}

\rem We'll add the symbol $\xi$ on a sheaf to indicate its torsion by $\LC_{\xi,\overline \Zm_l}$: 
for example $HT_\xi(\pi_v,\Pi_t):=HT(\pi_v,\Pi_t) \otimes \LC_{\xi,\overline \Zm_l}$ or
$\Psi_{K,\varrho,\xi}:=\Psi_{K,\varrho} \otimes \LC_{\xi,\overline \Zm_l}$.

We have spectral sequences
\begin{equation} \label{eq-ss-grk}
E_{1}^{p,q}=H^{p+q}(\Sh_{K,\bar s_v},\gr^{-p}_*
(\gr^k_!(\Psi_{K,\varrho,\xi})))
\Rightarrow H^{p+q}(\Sh_{K,\bar s_v} \gr^k_!(\Psi_{K,\varrho,\xi})),
\end{equation}
and
\begin{equation} \label{eq-ss-psi}
E_{1}^{p,q}=H^{p+q}(\Sh_{K,\bar s_v}, \gr^{-p}_!(\Psi_{K,\varrho,\xi}))
\Rightarrow H^{p+q}(\Sh_{K,\bar s_v},\Psi_{K,\varrho,\xi}).
\end{equation}

\begin{defin} \label{defi-Tm}
For a finite set $S$ of places of $\Qm$ containing the places where $G$ is ramified, 
denote by $\Tm^S_{abs}:=\prod'_{x \not \in S} \Tm_{x,abs}$ the 
abstract unramified  Hecke algebra
where
$\Tm_{x,abs} \simeq \Zm_l[X^{un}(T_x)]^{W_x}$ for $T_x$ a split torus,
$W_x$ the spherical Weyl group and $X^{un}(T_x)$ the set of $\overline \Zm_l$-unramified 
characters of $T_x$. 
\end{defin}

\noindent \textit{Example}.
For $w \in \spl$, we have
$$\Tm_{w|_{\Qm},abs}=\Zm_l \bigl [T_{w',i}:~ i=1,\cdots,d, ~ w'| (w|_{\Qm}) \bigr ]$$
where $T_{w',i}$ is the characteristic function of
$$\GL_d(\OC_{w'}) \diagonal(\overbrace{\varpi_{w'},\cdots,\varpi_{w'}}^{i}, \overbrace{1,\cdots,1}^{d-i} ) 
\GL_d(\OC_{w'}) \subset  \GL_d(F_{w'}).$$

Recall that $\Tm^S_{abs}$ acts through correspondances on each of the
$H^i(\Sh_{K,\bar \eta},\overline \Zm_l)$ where $K \in \mathcal{K}$ 
is maximal at each places outside $S$.

\begin{nota} For $K$ unramified outside $S$,
we denote by $\Tm_\xi(K)$ the image of $\Tm^S_{abs}$ inside $\End_{\overline \Zm_l}(H^{d-1}(\Sh_{K,\bar \eta},\LC_{\xi,\overline \Zm_l}))$.
\end{nota}
We also denote by
$$H^{d-1}(\sh_{K^v(\oo),\bar \eta},\LC_{\xi,\overline \Zm_l}):= 
\lim_{\genfrac{}{}{0pt}{}{\longrightarrow}{K_v}} H^{d-1}(\sh_{K^vK_v,\bar \eta},\LC_{\xi,\overline \Zm_l}),$$
where $K_v$ describe the set of open compact subgroup of $\GL_d(\OC_v)$.
We also use similar notation for others cohomology groups.

\begin{thm} \label{thm-recall} 
Let $\mathfrak m$ be a maximal ideal of $\Tm_\xi(K^v(\oo))$ such that
$\overline \rho_{\mathfrak m}$ is irreducible, cf. \S \ref{sec-hecke}.\footnote{Recall also that we suppose $q_v \equiv 1 \mod l$ and $l >d$.} Then
\begin{itemize}
\item[(i)] 
$H^{i}(\Sh_{K^v(\oo),\bar \eta},\LC_{\xi,\overline \Zm_l})_{\mathfrak m}$ is zero if $i \neq d-1$ and
otherwise torsion free.

\item[(ii)] 
Moreover the spectral 
sequences (\ref{eq-ss-grk}) and (\ref{eq-ss-psi}), localized at $\mathfrak m$, 
degenerate at $E_1$ and the $E_{1,\mathfrak m}^{p,q}$ are zero for $p+q \neq 0$
and otherwise torsion free. 
\end{itemize}
\end{thm}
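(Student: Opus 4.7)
The plan is to deduce part (i) from already established results and then bootstrap it, via the structure of the filtrations $\Fil^\bullet_!(\Psi_{K,\varrho})$ and $\coFil^\bullet_*(\Psi_{K,\varrho})$ described in \S\ref{para-exchange}, into the degeneration and torsion-freeness statements of part (ii).

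For (i), once the hypothesis that $\overline\rho_{\mathfrak m}$ is irreducible is invoked, $\mathfrak m$ is non-Eisenstein in the strong sense. The vanishing of $H^i(\sh_{K^v(\oo),\bar\eta},\overline\Zm_l)_{\mathfrak m}$ for $i\ne d-1$ together with torsion-freeness in the middle degree is the content of the main result of \cite{boyer-jep2} (as announced in the introduction): Matsushima's formula shows that only tempered cohomological automorphic representations $\Pi$ whose Galois representation reduces to $\overline\rho_{\mathfrak m}$ can contribute after localization, and temperedness concentrates their contribution in middle degree; torsion-freeness then comes from the standard lattice/universal-coefficient argument combined with the integrality of Galois representations attached to such $\Pi$.

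For (ii), the strategy is to show, by descending induction on the level of the two filtrations, that each localized graded piece $\gr^{i-1}_*(\gr^k_!(\Psi_{K^v(\oo),\varrho}))_{\mathfrak m}$ has cohomology concentrated in total degree $p+q=0$ and torsion free. The graded pieces are, rationally, direct sums of Harris--Taylor perverse sheaves $P(i,\pi_v)(\tfrac{i+1-2k}{2})$, and the \emph{Important property} after the definition of $HT$ tells us that in the limit case the $p$ and $p+$ intermediate extensions coincide; this is precisely what makes the integral graded piece behave like its rational counterpart. Using the Mantovan-type description of the cohomology of $P(t,\pi_v)$ in terms of Igusa varieties of lower dimension and parabolic induction from a Levi $\GL_{tg}\times\GL_{d-tg}$ (as developed in \cite{boyer-invent2}), one expresses $H^\bullet(\sh_{K,\bar s_v},P(t,\pi_v))_{\mathfrak m}$ as built from automorphic contributions on the Levi; the irreducibility of $\overline\rho_{\mathfrak m}$ forces these contributions to come from tempered pieces, giving concentration in middle degree and torsion-freeness. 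Once each $E_1^{p,q}_{\mathfrak m}$ is concentrated in $p+q=0$, the differentials of the spectral sequence have source or target equal to zero, so the sequence degenerates at $E_1$; and strictness of the filtration is equivalent to torsion-freeness of the graded pieces, which has just been established.

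The main technical obstacle is propagating torsion-freeness \emph{integrally} through the filtration: the naive long exact sequences only show that the cokernel of an intermediate map has torsion controlled by an $\Ext^1$, and one must rule out such torsion step by step. The key input is the coincidence of $p$ and $p+$ intermediate extensions of Harris--Taylor sheaves in the limit case, which guarantees that in the exchange diagram of \S\ref{para-exchange} the perverse sheaf $T$ of torsion vanishes whenever the strata involved differ (the observation recorded in the remark at the end of \S\ref{para-exchange}). Combined with the concentration in a single cohomological degree of each $H^\bullet(\sh_{K,\bar s_v},\gr^{i-1}_*(\gr^k_!(\Psi_{K,\varrho})))_{\mathfrak m}$, this rules out the parasitic $\Ext^1$ contributions and yields both the $E_1$-degeneration and the integral torsion-freeness of the $E_1$ terms, completing the proof of (ii).
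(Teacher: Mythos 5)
Your treatment of part (i) agrees with the paper: both simply cite the main theorem of \cite{boyer-jep2}, and that is indeed all the paper says there.

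For part (ii), however, there is a genuine gap, and it is located precisely at the step you treat as automatic. You assert that once the graded pieces are rationally identified with sums of Harris--Taylor sheaves $P(i,\pi_v)$, the Mantovan-type description plus temperedness (coming from irreducibility of $\overline\rho_{\mathfrak m}$) yields not only concentration in middle degree but also \emph{torsion-freeness} of each $H^\bullet(\sh_{K^v(\oo),\bar s_v},P(t,\pi_v))_{\mathfrak m}$. Temperedness controls the $\overline\Qm_l$-cohomology and gives concentration in a single degree rationally, but it says nothing about integral torsion; a Mantovan/Igusa description of a Newton stratum is a statement over $\overline\Qm_l$, and the integral lattices in it are exactly what is in question. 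Likewise, your appeal to the remark at the end of \S\ref{para-exchange} (that $T=0$ in the exchange diagram when the strata differ and ${}^p j_{!*}={}^{p+}j_{!*}$) only rules out torsion in the \emph{perverse-sheaf} extensions between graded pieces; it does not rule out torsion appearing inside the cohomology of a single graded piece. You are conflating these two kinds of torsion, and that is where the argument fails.

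The paper does not prove torsion-freeness of the graded pieces directly: it argues by contradiction. Define $i(t)$ to be the least index with nontrivial torsion in $H^i(\sh_{K^v(\oo),\bar s_v},{}^pj^{=tg}_{!*}HT(\pi_v,\Pi_t))_{\mathfrak m}$ (with $i(t)=+\infty$ if none), note $i(t)\le 0$ via ${}^p j_{!*}={}^{p+}j_{!*}$ and duality, and suppose some $i(t)$ is finite; take $t_0$ maximal such. Lemma \ref{lem-ih} then shows $i(t)=t-t_0$ for $1\le t\le t_0$, by a careful analysis of the spectral sequences attached to the resolution (\ref{eq-resolution0}) and to the filtration (\ref{eq-fil-psi}), together with affineness of the open strata and saturation of the maps. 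Propagating this through the resolution (\ref{eq-resolution-psi2}) finally produces nontrivial torsion in $H^{1-t_0}(\sh_{K^v(\oo),\bar s_v},\Psi_\varrho)_{\mathfrak m}$, contradicting part (i). Your plan skips this entire mechanism: the inductive structure you propose does not account for the possibility that a lower-degree cohomology group of a graded piece acquires torsion, and you provide no substitute for Lemma \ref{lem-ih}. To repair the argument you would need either to carry out this contradiction argument, or to supply an independent proof of torsion-freeness of each $H^\bullet(\sh_{K^v(\oo),\bar s_v},P(t,\pi_v))_{\mathfrak m}$ --- which is not implied by temperedness and is not known to follow directly.
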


\begin{proof}
(i) It is the main theorem of \cite{boyer-jep2}.

(ii) We follow closely the arguments of \cite{boyer-jep2} dealing with all
irreducible cuspidal representations instead of only characters in loc. cit.
using crucially that in the limit case, the $p$ and $p+$ intermediate extensions
coincide exactly as it was the case for characters in loc. cit.

From (\ref{eq-psi-split}) we are led to study the initial terms of the spectral
sequence given by the filtration of $\Psi_{K^v(\oo),\varrho,\xi}$ for $\varrho$ 
a irreducible $\overline \Fm_l$-supercuspidal representation
associated through local Langlands correspondance to an irreducible constituant 
of $\overline \rho_{\mathfrak m,v}$. Recall also, as
we are in the limit case, that 
\begin{itemize}
\item as there do not exist irreducible $\overline \Qm_l$-cuspidal representation
of $\GL_g(F_v)$ for $g \leq d$ with modulo $l$ reduction being not supercuspidal,
the irreducible
constituants of $\Psi_{K,\varrho,\xi} \otimes_{\overline \Zm_l} \overline \Qm_l$
are the Harris-Taylor perverse sheaves $P(t,\pi_v)(\frac{t-1-2k}{2})$ where
the modulo $l$ reduction of $\pi_v$ is isomorphic to $\varrho$ and
$0 \leq k < t$. 

\item Over $\overline \Zm_l$, we do not have to worry about
the difference between $p$ and $p+$ intermediate extensions.
\end{itemize}
From \cite{boyer-duke} \S 2.3, consider the following equivariant resolution
\begin{multline} \label{eq-resolution0}
0 \rightarrow j_!^{=sg} HT(\pi_v,\Pi_t \{ \frac{t-s}{2} \}  \times 
\speh_{s-t}(\pi_v\{ t/2 \} ))
 \otimes \Xi^{\frac{s-t}{2}} \longrightarrow \cdots  \\
\longrightarrow j_!^{=(t+1)g} HT(\pi_v,\Pi_t\{ -1/2 \} \times \pi_v \{ t/2 \} ) 
\otimes \Xi^{\frac{1}{2}} \longrightarrow \\ j_!^{=tg} HT(\pi_v,\Pi_t) 
\longrightarrow  \lexp p j_{!*}^{=tg} HT(\pi_v,\Pi_t) \rightarrow 0,
\end{multline}
where $\Pi_t$ is any representation of $\GL_{tg}(F_v)$, also called the infinitesimal 
part of the perverse sheaf $\lexp p j_{!*}^{=tg} HT(\pi_v,\Pi_h)$.\footnote{In
$P(t,\pi_v)$ the infinitesimal part $\Pi_t$ is $\st_t(\pi_v)$.}

By adjunction property, for $1 \leq \delta \leq s-t$, the map
\begin{multline} \label{eq-map1}
j_!^{=(t+\delta)g} HT(\pi_v,\Pi_t\{ \frac{-\delta}{2} \}  \times \speh_{\delta}
(\pi_v \{ t/2 \} )) \otimes \Xi^{\delta/2} \\
\longrightarrow j_!^{=(t+\delta-1)g} HT(\pi_v,\Pi_t \{ \frac{1-\delta}{2} \}  
\times  \speh_{\delta-1}(\pi_v\{ h/2 \} )) \otimes \Xi^{\frac{\delta-1}{2}}
\end{multline}
is given by 
\begin{multline}\label{eq-map2}
HT(\pi_v,\Pi_t \{ \frac{-\delta}{2} \}  \times \speh_{\delta}(\pi_v\{ t/2 \} )) 
\otimes \Xi^{\delta/2} \longrightarrow \\ j^{=(t+\delta)g,*} (
\lexp p i^{(t+\delta)g,!}  ( 
j_!^{=(t+\delta-1)g} HT(\pi_v,\Pi_t \{ \frac{1-\delta}{2} \}  
\times \speh_{\delta-1}(\pi_v\{ t/2 \} )) \otimes \Xi^{\frac{\delta-1}{2}}))
\end{multline}
To compute this last term we use the resolution (\ref{eq-resolution0}) for $t+\delta-1$.
Precisely denote by 
$\HC:=HT(\pi_v,\st_t(\pi_v \{ \frac{1-\delta}{2} \} ) \times 
\speh_{\delta-1}(\pi_v\{ t/2 \} )) \otimes \Xi^{\frac{\delta-1}{2}}$,
and write the previous resolution for $t+\delta-1$ as follows
$$0 \rightarrow K \longrightarrow  j_!^{=(t+\delta)g} 
\HC' \longrightarrow Q \rightarrow 0,$$
$$0 \rightarrow Q \longrightarrow  j_!^{=(t+\delta-1)g} \HC 
\longrightarrow  \lexp p j_{!*}^{=(t+\delta-1)g} \HC \rightarrow 0,$$
with 
$$\HC':=HT \Bigl ( \pi_v,  \Pi_t \{ \frac{1-\delta}{2} \}  
\times \bigl ( \speh_{\delta-1}(\pi_v \{ -1/2 \})
 \times \pi_v \{\frac{\delta-1}{2} \}  \bigr ) \{ t/2 \}  \Bigr ) \otimes \Xi^{\delta/2}.$$ 
As the support of $K$ is contained in $\sh^{\geq (t+\delta+1)g}_{I,\bar s_v}$ then
$\lexp p i^{(t+\delta)g,!} K=K$ and
$j^{=(t+\delta)g,*} (\lexp p i^{(t+\delta)g,!} K)$ is zero. Moreover
$\lexp p i^{(t+\delta)g,!} ( \lexp p j_{!*}^{=(t+\delta-1)g} \HC)$ is zero by
construction of the intermediate extension. We then deduce that
\begin{multline} \label{eq-map3}
 j^{=(t+\delta)g,*} (\lexp p i^{(t+\delta)g,!} ( j_!^{=(t+\delta-1)g} HT(\pi_v,\Pi_t 
 \{ \frac{1-\delta}{2} \}  \times 
\speh_{\delta-1}(\pi_v\{ t/2 \} )) \otimes \Xi^{\frac{\delta-1}{2}})) \\ \simeq 
HT \Bigl ( \pi_v,  \Pi_t \{ \frac{1-\delta}{2} \}  \\ \times \bigl 
( \speh_{\delta-1}(\pi_v \{ -1/2 \})
 \times \pi_v \{\frac{\delta-1}{2} \}  \bigr ) \{ t/2 \}  \Bigr ) \otimes \Xi^{\delta/2} 
\end{multline}
In particular, up to homothety, the map (\ref{eq-map2}), and so (\ref{eq-map1}), is unique.
Finally as the maps of (\ref{eq-resolution0}) are strict, the given maps (\ref{eq-map1}) 
are uniquely determined, that is, if we forget the infinitesimal parts, 
these maps are independent of the 
chosen $t$ in (\ref{eq-resolution0}), i.e. only depends on $t+\delta$.

For every $1 \leq t \leq s$, let denote by $i(t)$ the smallest index $i$ such that 
$H^i(\sh_{K^v(\oo),\bar s_v},\lexp p j^{=tg}_{!*} HT_\xi(\pi_v,\Pi_t))_{\mathfrak m}$
has non trivial torsion: if it does not exist then we set $i(t)=+\oo$ and note that
it does not depend on the choice of the infinitesimal part $\Pi_t$. By duality, as 
$\lexp p j_{!*}=\lexp {p+} j_{!*}$ for ours Harris-Taylor local systems, 
note that when $i(t)$ is finite then $i(t) \leq 0$. 
Suppose by absurdity there exists
$t$ with $i(t)$ finite and denote $t_0$ the biggest such $t$.

\begin{lem} \label{lem-ih}
For $1 \leq t \leq t_0$ then $i(t)=t-t_0$.
\end{lem}


\begin{proof}
a) We first prove that for every $t_0 < t \leq s$, the cohomology 
groups of  $j^{=tg}_! HT(\pi_v,\Pi_t)$ are torsion free. Consider the
following strict filtration in the category of free perverse sheaves
\begin{multline} \label{eq-fil-j}
(0)=\Fil^{-1-s}(\pi_v ,h) \harrow \Fil^{-s}(\pi_v ,h) \harrow \cdots \\
\harrow \Fil^{-t}(\pi_v ,t)=j^{=tg}_{!} HT(\pi_v ,\Pi_t)
\end{multline}
where the symbol $\harrow$ means a strict\footnote{i.e. the cokernel is free} monomorphism, 
with graded parts 
$$\gr^{-k}(\pi_v,t) \simeq \lexp p j^{=kg}_{!*} 
HT(\pi_v,\Pi_t \{\frac{t-k}{2} \} \otimes \st_{k-t}(\pi_v\{t/2 \} ))(\frac{t-k}{2}).$$ 
Over $\overline \Qm_l$, the result is proved in
\cite{boyer-invent2} \S 4.3. Over $\overline \Zm_l$, the result follows from the
general constructions of \cite{boyer-torsion} and the fact that the $p$ and $p+$
intermediate extensions are isomorphic for Harris-Taylor perverse sheaves
associated to characters. The associated
spectral sequence localized at $\mathfrak m$, is then concentrated in middle degree and torsion free which gives the claim.

b) Before watching the cases $t \leq t_0$, note that
the spectral sequence associated to (\ref{eq-resolution0}) for $t=t_0+1$, 
has all its $E_1$ terms torsion free
and degenerates at its $E_2$ terms. As by hypothesis the aims of this spectral sequence is free
and equals to only one $E_2$ terms, we deduce that all the maps
\begin{multline} \label{eq-map1-coho}
H^0 \bigl (\sh_{K^v(\oo),\bar s_v},j_!^{=(t+\delta)g} HT_\xi(\pi_v,\st_t(\pi_v \{ \frac{-\delta}{2} \} ) \times \speh_{\delta}
(\pi_v\{ t/2 \} )) \otimes \Xi^{\delta/2} \bigr )_{\mathfrak m} \\
\longrightarrow \\ H^0 \bigl (\sh_{K^v(\oo),\bar s_v},
j_!^{=(t+\delta-1)g} HT_\xi(\pi_v,\st_t(\chi_v \{ \frac{1-\delta}{2} \} ) \\ \times 
\speh_{\delta-1}(\chi_v\{ t/2 \} )) \otimes \Xi^{\frac{\delta-1}{2}} \bigr )_{\mathfrak m}
\end{multline}
are saturated, i.e. their cokernel are free $\overline \Zm_l$-modules. Then from the previous fact stressed after (\ref{eq-map3}), this property
remains true when we consider the associated spectral sequence for 
$1 \leq t' \leq t_0$.

c) Consider now $t=t_0$ and the spectral sequence associated to 
(\ref{eq-resolution0}) where
\begin{multline} \label{eq-E2pq}
E_2^{p,q}=H^{p+2q}(\sh_{K^v(\oo),\bar s_v}, j_!^{=(t+q)g} \\ 
HT_\xi(\pi_v,\st_t(\pi_v (-q/2)) \times \speh_q(\pi_v \{ t/2 \} )) 
\otimes \Xi^{\frac{q}{2}})_{\mathfrak m}
\end{multline}
By definition of $t_0$, we know that  some of
the $E_\oo^{p,-p}$ should have a non trivial torsion subspace.
We saw that 
\begin{itemize}
\item the contributions from the deeper strata are torsion free and

\item $H^i(\sh_{K^v(\oo),\bar s_v},j^{=t_0g}_! HT_\xi(\pi_v,\Pi_{t_0}))_{\mathfrak m}$
are zero for $i<0$ and is torsion free for $i=0$, whatever is $\Pi_{t_0}$.

\item Then there should exist
a non strict map $d_1^{p,q}$. But, we have just seen that it
can not be maps between deeper strata.

\item Finally, using the previous points, the only possibility is that the 
cokernel of
\begin{multline} \label{eq-map1-coho2}
H^0 \bigl (\sh_{K^v(\oo),\bar s_v},j_!^{=(t_0+1)g} HT_\xi(\pi_v,\st_{t_0}(\pi_v 
\{ \frac{-1}{2} \} ) 
\times \pi_v \{ t_0/2 \} )) \otimes \Xi^{1/2} \bigr )_{\mathfrak m} \\
\longrightarrow \\ H^0 \bigl (\sh_{K^v(\oo),\bar s_v},
j_!^{=t_0g} HT_\xi(\pi_v,\st_{t_0}(\pi_v)) \bigr )_{\mathfrak m}
\end{multline}
has a non trivial torsion subspace. 
\end{itemize}
In particular we have $i(t_0)=0$.

d) Finally using the fact 2.18 and the previous points, for any $1 \leq t \leq t_0$, 
in the spectral sequence (\ref{eq-E2pq})
\begin{itemize}
\item by point a), $E_2^{p,q}$ is torsion free for $q \geq t_0-t+1$ and so it
is zero if $p+2q \neq 0$;

\item by affiness of the open strata, cf. \cite{boyer-imj} theorem 1.8,
$E_2^{p,q}$ is zero for $p+2q<0$ and torsion free for $p+2q=0$;

\item by point b), the maps $d_2^{p,q}$ are saturated for $q \geq t_0-t+2$;

\item by point c), $d_2^{-2(t_0-t+1),t_0-t+1}$ has a cokernel with a non trivial 
torsion subspace.

\item Moreover, over $\overline \Qm_l$, the spectral sequence degenerates
at $E_3$ and $E_3^{p,q}=0$ if $(p,q) \neq (0,0)$.
\end{itemize}
We then deduce that 
$H^i(\sh_{K^v(\oo),\bar s_v},\lexp p j^{=tg}_{!*} HT_\xi(\pi_v,\Pi_t))_{\mathfrak m}$
is zero for $i < t-t_0$ and for $i=t-t_0$ it has a non trivial torsion subspace.
\end{proof}

Consider now the filtration of stratification of $\Psi_\varrho:=\Psi_{K^v(\oo),\varrho}$\footnote{i.e. with infinite level at $v$} constructed using the
adjunction morphisms $j^{=h}_! j^{=h,*}$ as in \cite{boyer-torsion}
\begin{equation} \label{eq-fil-psi}
\Fil^1_!(\Psi_{\varrho}) \harrow \Fil^{2}_!(\Psi_{\varrho})
\harrow \cdots \harrow \Fil^{s}_!(\Psi_{\varrho})
\end{equation}
where $\Fil^{t}_!(\Psi_{\varrho})$ is the saturated image of 
$j^{=tg}_!j^{=tg,*} \Psi_{\varrho} \longrightarrow \Psi_{\varrho}$.
For a fixed $\pi_v$, let denote by
$\Fil^1_{!,\pi_v}(\Psi) \harrow \Fil^1_!(\Psi_\varrho)$ such that
$\Fil^1_{!,\pi_v}(\Psi) \otimes_{\overline \Zm_l} \overline \Qm_l \simeq \Fil^1_!(\Psi_{\pi_v})$
where $\Psi_{\pi_v}$ is the direct factor of $\Psi_\varrho \otimes_{\overline \Zm_l} \overline \Qm_l$
associated to $\pi_v$, cf. \cite{boyer-torsion}. 
From \cite{boyer-duke} 3.3.5,
we have the following resolution of $\gr^t_{!,\pi_v}(\Psi_\varrho)$
\begin{multline} \label{eq-resolution-psi2}
0 \rightarrow j^{=sg}_! HT(\chi_v,LT_{t,s}(\pi_v)) \otimes \pi^{\vee}_v(\frac{s-t}{2})
\longrightarrow \\
 j^{=(s-1)g}_!HT(\pi_v,LT_{t,s-1}(\pi_v)) \otimes \pi^{\vee}_v(\frac{s-t-1}{2}) 
\longrightarrow \\ \cdots \longrightarrow j^{=tg}_! HT(\pi_v,\st_t(\pi_v)) \otimes 
\pi^{\vee}_v \longrightarrow \gr^{t}_{!,\pi_v}(\Psi_\varrho) \rightarrow 0,
\end{multline}
where $LT_{t,t+\delta}(\pi_v) \hookrightarrow \st_t(\pi_v \{ -\delta/2 \}) \times \speh_{\delta}(\pi_v \{ t/2 \}),$
is the only irreducible sub-space of this induced representation,

We can then apply the previous arguments a)-d) above:
for $t \leq t_0$ (resp. $t > t_0$) 
the torsion of $H^i(\sh_{K^v(\oo),\bar s_v},\gr^{t}_{!,\pi_v}(\Psi_{v,\xi}))_{\mathfrak m}$
is trivial for any $i \leq t-t_0$ (resp. for all $i$) 
and the free parts are concentrated for $i=0$. 
Using the spectral sequence associated to the previous filtration, 
we can then conclude that $H^{1-t_0}(\sh_{K^v(\oo),\bar s_v},\Psi_{v,\xi})_{\mathfrak m}$ 
would have non trivial torsion 
which is false as $\mathfrak m$ is supposed to be KHT-free.
\end{proof}

\noindent
In particular the previous spectral sequence gives us a filtration of 
$H^{d-1}(\sh_{K^v(\oo),\bar \eta_v},\LC_{\xi,\overline \Fm_l})_{\mathfrak m}$ whose graded parts 
are
$$H^{0}(\Sh_{K^v(\oo),\bar s_v},\gr^{-p} (\gr^k_!(\Psi_{K,\varrho,\xi})))_{\mathfrak m} \otimes_{\overline \Zm_l} \overline \Fm_l,$$
for $\varrho$ describing the equivalence classes of irreducible $\overline \Fm_l$-supercuspidal representation of $\GL_g(F_v)$ with $1 \leq g \leq d$, and then
$1 \leq k \leq p \leq \lfloor \frac{d}{g} \rfloor$.

\subsection{Local and global monodromy}
\label{para-mono}

Consider a fixed irreducible $\overline \Fm_l$-supercuspidal representation $\varrho$
of $\GL_g(F_v)$ and denote by 
$\Psi_\varrho$ the
direct factor of $\Psi_{K^v(\oo),v}$ associated to $\varrho$. 

Over $\overline \Qm_l$, the monodromy operator
define a nilpotent morphism 
$N_{\varrho,\overline \Qm_l}: \Psi_\varrho \otimes_{\overline \Zm_l} \overline \Qm_l
\longrightarrow \Psi_\varrho(1)  \otimes_{\overline \Zm_l} \overline \Qm_l$
compatible with the filtration $\Fil^\bullet_!(\Psi_\varrho)$ in the sense that
$\Fil^t_!(\Psi_\varrho)  \otimes_{\overline \Zm_l} \overline \Qm_l$ coincides
with the kernel of $N_{\varrho,\overline \Qm_l}^t$. The aim of this section is to
construct a $\overline \Zm_l$-version $N_\varrho$ of $N_{\varrho,\overline \Qm_l}$ such that
$\Fil^t_!(\Psi_\varrho)  \otimes_{\overline \Zm_l} \overline \Fm_l$ coincides with
the kernel of $N_\varrho^t  \otimes_{\overline \Zm_l} \overline \Fm_l$.

\bigskip

\noindent \textit{First step}: consider 
$$0 \rightarrow \Fil^{1}_!(\Psi_\varrho) \longrightarrow \Psi_\varrho \longrightarrow
\coFil^1_!(\Psi_\varrho) \rightarrow 0,$$
and the following long exact sequence
$$
0 \rightarrow \hom(\coFil^1_!(\Psi_\varrho),\Psi_\varrho(1)) \longrightarrow
\hom(\Psi_\varrho,\Psi_\varrho(1)) \\ \longrightarrow \hom(\Fil^1_!(\Psi_\varrho),\Psi_\varrho(1))
\longrightarrow \cdots
$$
where $\hom$ is taken
in the category of equivariant Hecke perverse sheaves with an action of
$\gal(\overline F_v/F_v)$.
As $\Fil^1_!(\Psi_\varrho)\otimes_{\overline \Zm_l} \overline \Qm_l$ coincides with 
the kernel of $N_{\varrho,\overline \Qm_l}$, then
$N_{\varrho,\overline \Qm_l} \in \hom(\Psi_\varrho,\Psi_\varrho) \otimes_{\overline \Zm_l}
\overline \Qm_l$ comes from 
$\hom(\coFil^1_!(\Psi_\varrho),\Psi_\varrho(1))\otimes_{\overline \Zm_l}
\overline \Qm_l$, so that we focus on  $\hom(\coFil^1_!(\Psi_\varrho),\Psi_\varrho(1))$.
From
$$0 \rightarrow \gr^2_!(\Psi_\varrho) \longrightarrow \coFil^1_!(\Psi_\varrho) 
\longrightarrow \coFil^2_!(\Psi_\varrho) \rightarrow 0,$$
we obtain
\begin{multline*}
0 \rightarrow \hom(\coFil_!^2(\Psi_\varrho),\Psi_\varrho(1)) \longrightarrow
\hom(\coFil_!^1(\Psi_\varrho),\Psi_\varrho(1)) \longrightarrow \\
\hom(\gr_!^2(\Psi_\varrho),\Psi_\varrho(1)) \longrightarrow \ext^1(\coFil_!^2(\Psi_\varrho),\Psi_\varrho(1))) \longrightarrow \cdots
\end{multline*}
The socle of $\Psi_\varrho \otimes_{\overline \Zm_l} \overline \Qm_l$
being contained in $\Fil^1_!(\Psi_\varrho)\otimes_{\overline \Zm_l} \overline \Qm_l$,
any map $\coFil_!^2(\Psi_\varrho) \longrightarrow \Psi_\varrho(1))$ can not be
equivariant for the Galois action, so that we are led to look at 
$$\hom(\gr_!^2(\Psi_\varrho),\Psi_\varrho(1)) \simeq
\hom(\gr_!^2(\Psi_\varrho), \Fil_*^1(\gr_!^1(\Psi_\varrho(1))))$$
where
$$0 \rightarrow \Fil_*^1(\gr_!^1(\Psi_\varrho)) \longrightarrow \Fil_!^1(\Psi_\varrho) 
\longrightarrow \coFil_*^1(\Fil_!^1(\Psi_\varrho) \rightarrow 0.$$
Note that 
$\gr_!^2(\Psi_\varrho)\otimes_{\overline \Zm_l} \overline \Qm_l \simeq 
\Fil_*^1(\gr_1^1(\Psi_\varrho(1))) \otimes_{\overline \Zm_l} \overline \Qm_l$ 
and their $\overline \Zm_l$-structure is obtained, cf. the introduction of \cite{boyer-duke}
or equation (\ref{eq-resolution0}), 
through the strict $\overline \Zm_l$-epimorphisms
$$j^{=2g}_! j^{=2g,*}\gr_!^2(\Psi_\varrho) \twoheadrightarrow \gr_!^2(\Psi_\varrho), \quad
\hbox{and} \quad
j^{=2g}_! j^{=2g,*}\Fil_*^1(\gr_!^1(\Psi_\varrho)) \twoheadrightarrow  
\Fil_*^1(\gr_!^1(\Psi_\varrho)),$$
cf. figure \ref{fig-psi-filtration} and the notations of the 
beginning of \S \ref{para-exchange}.

In particular to prove that $\gr_!^2(\Psi_\varrho)$ is isomorphic to
$\Fil_*^1(\gr_1^1(\Psi_\varrho(1)))$, it suffices to prove that 
the two local systems
$j^{=2g,*}\gr_!^2(\Psi_\varrho)$ and $ j^{=2g,*}
\Fil_*^1(\gr_1^1(\Psi_\varrho(1)))$ are isomorphic.
In this case we can take\footnote{As it is not clear that 
$\ext^1(\coFil_!^2(\Psi_\varrho),\Psi_\varrho(1)))$ is torsion free, we can not claim at
this stage that $N_v \in \hom( \Psi_\varrho,\Psi_\varrho(1))$.}
$N_v \in \hom( \Psi_\varrho,\Psi_\varrho(1)) \otimes_{\overline \Zm_l} \overline \Qm_l$ so that, over $\overline \Zm_l$ we have 
$\Fil^1_*(\gr^1_!(\Psi_\varrho(1)))=N_v(\Fil^2_!(\Psi_\varrho))$.

\noindent \textit{More generally} to prove that the two perverse sheaves
$\gr_!^{h+1}(\Psi_\varrho)$ and $\Fil_*^1(\gr_!^h(\Psi_\varrho(1)))$ are isomorphic,
it suffices to prove that the two local systems
$j^{=(h+1)g,*}\gr_!^{h+1}(\Psi_\varrho)$ and $j^{=(h+1)g,*} \Fil_*^1(\gr_!^h(\Psi_\varrho(1)))$ 
are isomorphic.

\bigskip

\noindent \textit{Second step}: we want to prove that the local systems
$j^{=2g,*}\gr_!^2(\Psi_\varrho)$ and $j^{=2g,*}
\Fil_*^1(\gr_!^1(\Psi_\varrho))$ are isomorphic. Consider first the following situation:
let $\LC_k$ and $\LC_{k+1}$ be $\overline \Zm_l$-local systems on a scheme $X$ 
such that:
\begin{itemize}
\item $\LC_k \hookrightarrow \LC_{k+1}$ where the cokernel 
$\gr_{k+1}$ is torsion free;

\item $\LC_{k+1} \otimes_{\overline \Zm_l} \overline \Qm_l \simeq
(\LC_k \otimes_{\overline \Zm_l} \overline \Qm_l) \oplus (\gr_{k+1} \otimes_{\overline \Zm_l} \overline \Qm_l)$ where $\gr_{k+1} \otimes_{\overline \Zm_l} 
\overline \Qm_l$ is supposed to be irreducible;

\item we introduce
$$\xymatrix{
\gr'_{k+1} \ar@{^{(}-->}[r] \ar@{^{(}-->}[d] & \gr_{k+1,\overline \Qm_l} \ar@{^{(}->}[d] \\
\LC_{k+1} \ar@{^{(}->}[r] & \LC_{k+1} \otimes_{\overline \Zm_l} \overline \Qm_l.
}$$
We moreover suppose that  $\gr_{k+1} \otimes_{\overline \Zm_l} \overline \Fm_l$
is also irreducible so the various stable $\overline \Zm_l$-lattices of $\gr_{k+1}$ 
are homothetic.
\end{itemize}
We then have
$$0 \rightarrow \LC_k \oplus \gr'_{k+1} \longrightarrow \LC_{k+1} 
\longrightarrow T \rightarrow 0,$$
where $T$ is torsion and can be viewed as a quotient
$$\LC_k \hookrightarrow \LC'_k \twoheadrightarrow T, \quad 
\gr'_{k+1} \hookrightarrow \gr_{k+1} \twoheadrightarrow T,$$
with 
$$\LC_k \hookrightarrow \LC_{k+1} \twoheadrightarrow \gr_{k+1}, \qquad
\gr'_{k+1} \hookrightarrow \LC_{k+1} \twoheadrightarrow \LC'_k.$$
As $\gr_{k+1} \otimes_{\overline \Zm_l} \overline \Qm_l$ is irreducible, then
$\gr'_{k+1} \hookrightarrow \gr_{k+1}$ is given by multiplication by $l^{\delta_k}$ and,
as the stable lattices of $\gr_k \otimes_{\overline \Zm_l} \overline \Qm_l$
are all isomorphic, the
extension is characterized by this $\delta_k$.

Consider then the $\overline \Zm_l$-local system $\LC:=j^{=g,*} \Psi_\varrho$
and recall that 
$$\LC \otimes_{\overline \Zm_l} \overline \Qm_l \simeq 
\bigoplus_{i=1}^r HT_{\overline \Qm_l}(\pi_{v,i},\pi_{v,i}),$$
where we fix any numbering of $\cusp(\varrho)=\{\pi_{v,1},\cdots,\pi_{v,r}\}$. 
For $k=1,\cdots,r$, we introduce
$$\xymatrix{
\LC_k \ar@{^{(}-->}[r] \ar@{^{(}-->}[d] & \bigoplus_{i=1}^k HT(\pi_{v,i},\pi_{v,i}) 
\ar@{^{(}->}[d] \\
\LC \ar@{^{(}->}[r] & \LC \otimes_{\overline \Zm_l} \overline \Qm_l.
}$$
Let denote by $T_{k+1}$ the torsion local system such that
$$0 \rightarrow \LC_k \oplus \gr_{k+1} \longrightarrow \LC_{k+1} \longrightarrow T_{k+1}
\rightarrow 0,$$
where $\gr_{k+1}:=\LC_{k+1}/\LC_k$, as above. We can apply the previous remark
and denote by $\delta_k$ the power of $l$ which define the homothety
$\gr'_{k+1} \hookrightarrow \gr_{k+1} \twoheadrightarrow T_{k+1}$. The set 
$\{ \delta_k: k=1,\cdots,r\}$ is then a numerical data to characterize $\LC$ inside
$j^{=1,*} \Psi_\varrho \otimes_{\overline \Zm_l} \overline \Qm_l$. 

(i) To control $j^{=2g,*} \Fil_*^1(\gr^1_!(\Psi_\varrho))$, we use the general description
above with
\begin{itemize}
\item local systems $\LC^+_k$ for $k=1,\cdots,r$
so that $\LC^+_k \otimes_{\overline \Zm_l} \overline \Qm_l \simeq \bigoplus_{i=1}^k
HT_{\overline \Qm_l}(\pi_{v,i},\st_2(\pi_{v,i})(-1/2)$;

\item with $\gr^{+,'}_{k+1}$ defined, as before, with
$$0 \rightarrow \LC^+_k \oplus \gr^{+,'}_{k+1} \longrightarrow \LC^+_{k+1}
\longrightarrow T_{k+1} \rightarrow 0,$$
where $T_{k+1}$ is killed by $l^{\delta^+_{k+1}}$.
\end{itemize}
We want to prove that $\delta^+_k=\delta_k$ for every $k=1,\cdots,r$ where
$\{ \delta_k: k=1,\cdots,r \}$ is the numerical data associated to $j^{=1,*} \Psi_\varrho$.

Let denote by
$$j^{=1}_{\neq 1}: \sh_{K,\bar s_v} \setminus \sh_{K,\bar s_v,1}^{\geq 1}
\hookrightarrow \sh_{K,\bar s_v}, \qquad i_1^1:\sh_{K,\bar s_v,1}^{\geq 1}
\hookrightarrow \sh_{K,\bar s_v}^{\geq 1}=\sh_{K,\bar s_v}.$$
From \cite{boyer-duke} lemma B.3.2,
$j^{=2g,*} \Fil_*^1(\gr^1_!(\Psi_\varrho))$ is obtained as follows. Let 
$$P:=\lexp p h^{-1} i_1^{1,*} j^{=1}_{\neq 1,*} j^{=1,*}_{\neq 1} \Psi_\varrho$$ 
so that
$$0 \rightarrow P \longrightarrow 
j^{=1}_{\neq 1,!} j^{=1,*}_{\neq 1} \Psi_\varrho \longrightarrow  
\lexp p j^{=1}_{\neq 1,!*} j^{=1,*}_{\neq 1} \Psi_\varrho \rightarrow 0.$$
Then $P$ is the cosocle of $i_1^{1,*} \Fil_*^1(\gr^1_!(\Psi_\varrho))$ so that
$$j^{=2g,*} \Fil_*^1(\gr^1_!(\Psi_\varrho)) \simeq j^{=2g,*} P \times_{P_{1,d}(F_v)}
\GL_d(F_v),$$
where induction has the same meaning as in (\ref{eq-induction}).

Note then that the numerical data associated to $j^{=2g,*} P$ are also given by
$\{ \delta_k^+: k=1,\cdots,r \}$.
With the previous notations, consider the data associated to $\LC:=j^{=g,*} \Psi_\varrho$,
i.e. a filtration 
$$\LC_1 \subset \LC_2 \subset \cdots \subset \LC_r=\LC$$
with graded parts $\gr^k$ and 
$\gr'_k \hookrightarrow \gr_k$ 
is given by multiplication by $l^{\delta_k}$. 
We then have a strict filtration
$$\lexp p h^{-1} i_1^{1,*} j^{=1}_{\neq 1,*} \LC_1 \subset \cdots
\subset \lexp p h^{-1} i_1^{1,*} j^{=1}_{\neq 1,*} \LC_r=P,$$
with graded parts $\lexp p h^{-1}i_1^{1,*} j^{=1}_{\neq 1,*} \gr_k$.
Indeed we have
\begin{multline*}
\lexp p h^{-2} i_1^{1,*} j^{=1}_{\neq 1,*} \gr_{k+1}=0 \longrightarrow
\lexp p h^{-1} i_1^{1,*} j^{=1}_{\neq 1,*} \LC_k \longrightarrow \\
\lexp p h^{-1} i_1^{1,*} j^{=1}_{\neq 1,*} \LC_k \longrightarrow
\lexp p h^{-1} i_1^{1,*} j^{=1}_{\neq 1,*} \gr_{k+1}
\longrightarrow \lexp p h^0 i_1^{1,*} j^{=1}_{\neq 1,*} \LC_k
\end{multline*}
where the free quotient of $\lexp p h^0 i_1^{1,*} j^{=1}_{\neq 1,*} \LC_k$ is zero.
Moreover it is torsion free because its torsion corresponds to the difference between
$p$ and $p+$ intermediate extensions which are equal here from the main result
of \cite{boyer-duke}. We then apply the exact functor $j^{=2g,*}$ and we
induce from $P_{1,d}(F_v)$ to $\GL_d(F_v)$ to obtain the filtration
$\LC_\bullet^+$ of $j^{=2g,*}\Fil^1_*(\gr^1_!(\Psi_\varrho))$ where
$\gr_k^{+,'} \hookrightarrow \gr_k^{+}$
is given by multiplication by $l^{\delta_k}$.

(ii) Dually the same arguments applied to 
$$0 \rightarrow \lexp p j^{=1}_{\neq 1,!*} j^{=1,*}_{\neq 1} \Psi_\varrho \longrightarrow
\lexp p j^{=1}_{\neq 1,*} j^{=1,*}_{\neq 1} \Psi_\varrho \longrightarrow Q \rightarrow 0,$$
give us that $j^{=2g,*} Q$ is characterized by the data $\{ \delta_k: k=1,\cdots,r \}$.
After inducing from $P_{1,d}(F_v)$ to $\GL_d(F_v)$, we obtain the description
of the local system $\AC:=j^{=2g,*} A$ where $A$ is defined as follows:
$$0 \rightarrow \lexp p j^{=g}_{!*} j^{=g,*} \coFil^1_*(\Psi_\varrho) \longrightarrow
\coFil^1_*(\Psi_\varrho) \longrightarrow A \rightarrow 0.$$
Concretely this means that $\lexp p j^{=2g}_{!*} \AC$ is the socle $A_1$ of $A$,
which corresponds to the square dot in the right side of the figure \ref{fig-psi-filtration}.

We are interested by the local system associated to $j^{=2g,*}$ of the cosocle of
$\Fil^2_!(\Psi_\varrho)$, which corresponds to the square dot in the left 
side of the figure \ref{fig-psi-filtration}.
As explained in \S \ref{para-exchange}, we have to use basic exchange
steps as many times as needed to move $A_1$ until it appears 
as the cosocle of $\Fil^2_!(\Psi_\varrho) \hookrightarrow \Psi_\varrho$.

Note then that all the perverse sheaves which are exchanged with $A_1$ during
this process, are lattice of
$j^{=tg}_{!*} HT_{\overline \Qm_l}(\pi_v,\st_t(\pi_v))(\frac{1-t+\delta}{2})$ with
$t \geq 3$, cf. figure \ref{fig-psi-filtration}. 
As explained in the remark after the definition of the exchange basic step,
as $\lexp p j^{=2g}_{!*} HT(\pi_v,\st_2(\pi_v)) \simeq
\lexp {p+} j^{=2g}_{!*} HT(\pi_v,\st_2(\pi_v))$, for all these exchange, we have
$T=0$ and $A_1$ remains unchanged during all the basic exchange steps.

\medskip

\noindent \textit{Third step}: at this stage we constructed a 
$\overline \Qm_l$-monodromy operator $N_v$ such $\Fil^1_*(\gr^1_!(\Psi_\varrho(1)))=
N_v(\Fil^2_!(\Psi_\varrho))$. 
Recall that this monodromy operator induces 
$$\alpha: \coFil^t_!(\cogr^t_*(\Psi_\varrho)) \longrightarrow  \cogr^{t+1}_*(\Psi_\varrho(1))$$
such that $j^{=(t+1)g,*} \circ \alpha$ is then an isomorphism over $\overline \Zm_l$. 
We say that $\alpha$
is an isomorphism. Indeed consider
$$
0 \rightarrow \lexp p j^{=tg}_{!*} j^{=tg,*} \cogr^t_*(\Psi_\varrho) \longrightarrow
\cogr^t_*(\Psi_\varrho) \longrightarrow \coFil^t_!( \cogr^t_*(\Psi_\varrho) ) \rightarrow 0,
$$
with the following two strict monomorphisms
\begin{equation} \label{eq-j1}
\alpha_1:\cogr^{t+1}_*(\Psi_\varrho) \hookrightarrow j^{=(t+1)g}_* j^{=(t+1)g,*}  
\cogr^{t+1}_*(\Psi_\varrho)
\end{equation}
and
\begin{equation} \label{eq-j2}
\alpha_2:\coFil^t_!( \cogr^t_*(\Psi_\varrho(1)) ) \hookrightarrow j^{=(t+1)g}_* j^{=(t+1)g,*}
\coFil^t_!( \cogr^t_*(\Psi_\varrho(1)) ).
\end{equation}
By composing $\alpha$ with $\alpha_2$ 
in (\ref{eq-j2}), we obtain
\begin{multline} \label{eq-jj}
\alpha_1, \alpha_2 \circ \alpha \in \hom \Bigl ( \cogr^{t+1}_*(\Psi_\varrho),
j^{=(t+1)g}_* j^{=(t+1)g,*}  \cogr^{t+1}_*(\Psi_\varrho) \Bigr ) \\ \simeq \hom
\Bigl ( j^{=(t+1)g,* }\cogr^{t+1}_*(\Psi_\varrho),
j^{=(t+1)g,*}  \cogr^{t+1}_*(\Psi_\varrho) \Bigr ),
\end{multline}
by adjunction. By hypothesis $\alpha_1$ and $\alpha_2 \circ \alpha$
coincides in this last space, so they are equal and $\alpha$ is then
an isomorphism.

\begin{nota} 
Under the hypothesis of theorem \ref{thm-recall} on $\mathfrak m$,
the action of $N_\varrho$ on $\Psi_\varrho$ defined above for every 
$\overline \Fm_l$-character $\varrho$, induces a nilpotent 
monodromy operator $N^{coho}_{\mathfrak m,v}$ on
$H^{0}(\sh_{I,\bar s_v},\Psi_{v,\xi})_{\mathfrak m}$.
We also denote by 
$\overline N^{coho}_{\mathfrak m,v}:=N^{coho}_{\mathfrak m,v} \otimes_{\overline \Zm_l}
\overline \Fm_l$ acting on $H^{0}(\sh_{I,\bar s_v},\Psi_{v,\xi})_{\mathfrak m} 
\otimes_{\overline \Zm_l} \overline \Fm_l$
\end{nota}

\section{Uniformity of automorphic sub-lattices}

\subsection{Infinite level at $v$}
\label{para-oolevel}

For $K \in \KC$ a compact open subgroup of $G(\Am_\Qm^\oo)$ as before,
we consider the category $\CC(K)$ 
of finitely generated $\OC_L$-modules with a continuous $K$-action and let
$\CC'(K)$ be a Serre subcategory of $\CC(K)$.
Let $\CC(K)_Z$ be the subcategory of $\CC(K)$ consisting of those $\sigma \in \CC$
possessing a central character which lifts 
$\prod_{v \in \Sigma^+(K)} (\det \rbar_{|I_v} \bar \varepsilon)
\circ \art$  and let $\CC'(K)_Z$ be a Serre subcategory of $\CC(K)_Z$.

For each $v \in \Sigma^+(K)$, we fix 
\begin{itemize}
\item a lift of a geometric Frobenius element $\Frob_v \in G_v:=\gal(\overline F_v/F_v)$,

\item an element $\alpha_v \in \overline \Zm_l$ lifting $\det \overline \rho_v(\Frob_v)$,

\item a character $\psi_{\sigma,v}:G_v \longrightarrow \overline \Zm_l^\times$ such that
$\psi_{\sigma,v}(\Frob_v)=\alpha_v$ and the composite with Artin map has a restriction to 
$I_v$ equal to the central character $\sigma$.
\end{itemize}
For $\sigma \in \CC'(K)_Z$, we define
$$S_{K}(\sigma):=H^{0}(\Sh_{K,\bar s_v}, \Psi_v \otimes
\cL_{\sigma^\vee})_{\mathfrak m}^\vee,$$
where $\cL_{\sigma^\vee}$ is the sheaf associated to $\sigma^\vee$.
When we consider only the direct factor $\Psi_\varrho$ of $\Psi_v$, we then
denote by $S_{K,\varrho}(\sigma)$ the associated space.

\rem As the $\mathfrak m$-localized cohomology groups are concentrated
in middle degree, note that the functor 
$\sigma \mapsto S_{K,\varrho}(\sigma)$ is exact.

Let $\sigma^v$ be a continuous finitely generated representation of $K^v$ and let
$\sigma_0(v)$ be the representation of $K=K^vK_v$ given by twisting the action
of $K^v$ on $\sigma^v$ by the character $\psi \circ \art_{F_v} \circ \det$ of $K_v$.
We assume that $\sigma \in \CC(K)_Z$. We then define
$$S_{K^v}(\sigma^v):=
\lim_{\rightarrow n}H^{0}(\sh_{K^v(n),\bar s_v}, 
\Psi_v \otimes \LC_{\sigma_0(v)^\vee)}^\vee)_{\mathfrak m}.$$
We let $\Tm(\sigma^v)_{\mathfrak m}$ denote the image of $\Tm^S$ in
$\End_\OC(S_{K^v}(\sigma^v))$ and we write $\rho(\sigma^v)_{\mathfrak m}$ for the 
composite $\gal_{F,s} \longmapright{\rho^{univ}} \GL_d(R_S^{univ}) \longrightarrow
\GL_d(\Tm(\sigma^v)_{\mathfrak m})$. We set
\begin{equation} \label{eq-Msigmav}
M_{K^v}(\sigma^v) \defeq \Bigl ( \hom_{\Tm(\sigma^v)_{\mathfrak m}[\gal_{F,S}]}
( \rho(\sigma^v)_{\mathfrak m},S_{K^v}(\sigma_v)_{\mathfrak m}))^*,
\end{equation}
as well as, with similar notations
$$M_{K^v}(\sigma) \defeq \Bigl ( \hom_{\Tm(\sigma)_{\mathfrak m}[\gal_{F,S}]}
( \rho(\sigma)_{\mathfrak m},S_{K}(\sigma)_{\mathfrak m}))^*.$$
For a fixed $K_v$ and a representation $\sigma_v$ of $K_v$ such that $\sigma:=\sigma_v \otimes_{\OC} \sigma^v$ is an element of $\CC(K)_Z$, then we have a natural isomorphism
\begin{equation} \label{eq-Msigma-v}
M_K(\sigma) \simeq \hom_{K_v}(M_{K^v}(\sigma^v),\sigma_v^*)^*.
\end{equation}


\subsection{Typicity}
\label{para-mono2}

As explained in \cite{harris-taylor}, the $\overline \Qm_l$-cohomology of $\sh_{K,\bar \eta}$
can be written as
$$H^{d-1}(\sh_{K,\bar \eta},\overline \Qm_l)_{\mathfrak m} \simeq 
\bigoplus_{\pi \in \AC_{K}(\mathfrak m)} (\pi^{\oo})^K \otimes \sigma(\pi^\oo),$$
where 
\begin{itemize}
\item $\AC_{K}(\mathfrak m)$ is the set of equivalence classes of automorphic
representations of $G(\Am)$ with non trivial $K$-invariants and such that its
modulo $l$ Satake's parameters, 
outside the set $S$ of places dividing the level $K$, are prescribed by $\mathfrak m$,

\item and $\sigma(\pi^\oo)$ is a representation of $\gal_{F,S}$. 
\end{itemize}
As $\overline \rho_{\mathfrak m}$ is supposed
to be absolutely irreducible, then as explained in chapter VI of \cite{harris-taylor},
if $\sigma(\pi^\oo)$ is non zero, then $\pi$ is a weak transfer of a cohomological 
automorphic representation $(\Pi,\psi)$ of $\GL_d(\Am_F) \times \Am_F^\times$
with $\Pi^\vee \simeq \Pi^c$ where $c$ is the complex conjugation.
Attached to such a $\Pi$ is a global Galois representation 
$\rho_{\Pi,l}:\gal_{F,S} \longrightarrow \GL_d(\overline \Qm_l)$ which is irreducible.

\begin{thm} (cf. \cite{nekovar-fayad} theorem 2.20) \\
If $\rho_{\Pi,l}$ is strongly irreducible, meaning it remains irreducible when
it is restricted to any finite index subgroup, then $\sigma(\pi^\oo)$ is a semi-simple
representation of $\gal_{F,S}$.
\end{thm}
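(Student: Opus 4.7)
The plan is to upgrade the identification $\sigma(\pi^\oo)^{\rm ss} \simeq \rho_{\Pi,l}^{\oplus m}$ (for some $m \geq 1$, which comes from the Kottwitz--Harris--Taylor description together with absolute irreducibility of $\rho_{\Pi,l}$) to an honest direct-sum decomposition of $\sigma(\pi^\oo)$ as a $\gal_{F,S}$-module. Equivalently, I need to show that every $\gal_{F,S}$-stable filtration of $\sigma(\pi^\oo)$ whose graded pieces are isomorphic to $\rho_{\Pi,l}$ splits, or that every $\gal_{F,S}$-equivariant nilpotent endomorphism of $\sigma(\pi^\oo)$ vanishes.

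The key leverage is strong irreducibility: by Schur's lemma, for \emph{any} open normal subgroup $H \triangleleft \gal_{F,S}$ one still has $\End_H(\rho_{\Pi,l}) = \overline{\Qm}_l$, hence the image of $\rho_{\Pi,l}|_H$ is Zariski-dense in an absolutely irreducible subgroup of $\GL_d$. Applying Chebotarev inside such an $H$, I would then produce a Frobenius element $\Frob_w$ at an unramified split place $w$ of $F$ lying in $H$ whose image $\rho_{\Pi,l}(\Frob_w)$ has $d$ pairwise distinct eigenvalues. Local--global compatibility for the KHT Shimura variety (applied to each cohomological automorphic representation contributing to $\sigma(\pi^\oo)$) then tells us that $\Frob_w$ acts Frobenius-semisimply on $\sigma(\pi^\oo)$; combined with the distinct-eigenvalue property on each Jordan--Hölder constituent, $\Frob_w$ acts semisimply on $\sigma(\pi^\oo)$ with $d$ eigenvalues of multiplicity $m$ each.

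Letting $w$ run through a set of places of positive density inside $H$, the commutative algebra generated by the corresponding $\Frob_w$ separates the Jordan--Hölder constituents and forces the decomposition $\sigma(\pi^\oo)|_H \simeq \rho_{\Pi,l}|_H^{\oplus m}$, where strong irreducibility is again used to prevent any collapse of the summands. Semisimplicity then descends from $H$ to all of $\gal_{F,S}$ by Clifford theory, since $\gal_{F,S}/H$ is finite and we are working in characteristic zero.

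The main obstacle is local--global compatibility: the Frobenius-semisimplicity of $\sigma(\pi^\oo)|_{G_w}$ at good unramified places is the deepest ingredient, and one must be careful that it is available for the specific representations entering $\sigma(\pi^\oo)$ (via Harris--Taylor and its refinements). A secondary subtlety is that mere absolute irreducibility of $\rho_{\Pi,l}$ is not sufficient to guarantee a regular Frobenius inside \emph{every} open normal subgroup of $\gal_{F,S}$: this is precisely the point where the full strength of the strong irreducibility hypothesis is indispensable.
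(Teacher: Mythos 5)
The theorem is cited from \cite{nekovar-fayad} and not reproved in the present paper, so I am comparing your proposal against the argument in that reference. Your overall framework — use strong irreducibility and Chebotarev to produce Frobenius elements $\Frob_w$, inside any prescribed open subgroup, that act with $d$ pairwise distinct eigenvalues on $\rho_{\Pi,l}$, and then deduce semisimplicity of $\sigma(\pi^\oo)$ — is the right one in spirit. However, the step where you claim that \emph{local--global compatibility} forces $\Frob_w$ to act Frobenius-semisimply on $\sigma(\pi^\oo)$ is not correct. Local--global compatibility concerns the $d$-dimensional representation $\rho_{\Pi,l}$ and its (Frobenius-semisimplified) Weil--Deligne datum; it says nothing about how $\Frob_w$ acts on the a priori possibly non-split module $\sigma(\pi^\oo)$. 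Since $\sigma(\pi^\oo)^{\mathrm{ss}} \simeq \rho_{\Pi,l}^{\oplus m}$, every Jordan--H\"older constituent already has $\Frob_w$ acting regularly; that gives zero control on the extension data, and a non-split self-extension of $\rho_{\Pi,l}$ by itself can carry a non-semisimple $\Frob_w$ even though both graded pieces are semisimple. The genuine ingredient in \cite{nekovar-fayad} is the Eichler--Shimura congruence relation: at almost all places $w$, $\Frob_w$ satisfies a Hecke polynomial identity $P_w(\Frob_w)=0$ \emph{on the cohomology itself}; when the Satake parameters at $\lambda$ are pairwise distinct, $P_w$ is separable and $\Frob_w$ is forced to act semisimply on all of $\sigma(\pi^\oo)$, not merely on its constituents. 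That congruence relation is a geometric theorem on the special fibre of the Shimura variety and is an input logically independent of local--global compatibility.

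There is a secondary gap as well: even granting semisimplicity of $\Frob_w$ on $\sigma(\pi^\oo)$ for a positive-density set of $w$ inside every open subgroup, it is not formal that $\sigma(\pi^\oo)$ is semisimple as a $\gal_{F,S}$-module. A single semisimple operator, or a commuting family of them, does not by itself force the ambient representation to be semisimple, and your appeal to "the commutative algebra generated by the $\Frob_w$ separating the Jordan--H\"older constituents" is a heuristic rather than an argument. In \cite{nekovar-fayad} one reduces, via the radical filtration, to showing that a length-two self-extension of $\rho_{\Pi,l}$ occurring inside $\sigma(\pi^\oo)$ must split, and then argues cohomologically using that a Zariski-dense set of regular elements of the image act semisimply on the extension. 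Your proposal would need this step spelled out, in addition to replacing local--global compatibility by the congruence relation, to become a complete proof.
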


\rem The Tate conjecture predicts that $\sigma(\pi^\oo)$ is always semi-simple.

\begin{defin} (cf. \cite{scholze-LT} \S 5)   \label{defi-typic}
We say that $\mathfrak m$ is KHT-typic for $K$ if, as 
a $\Tm(K)_{\mathfrak m}[\gal_{F,S}]$-module,
$$H^{d-1}(\sh_{K,\bar \eta},\overline \Zm_l)_{\mathfrak m} \simeq 
\sigma_{\mathfrak m,K} 
\otimes_{\Tm(K)_{\mathfrak m}} \rho_{\mathfrak m,K},$$
for some $\Tm(K)_{\mathfrak m}$-module $\sigma_{\mathfrak m,K}$ on which 
$\gal_{F,S}$ acts trivially and 
$$\rho_{\mathfrak m,K}:\gal_{F,S} \longrightarrow 
\GL_d(\Tm(K)_{\mathfrak m})$$ 
is the stable lattice of
$\bigoplus_{\widetilde{\mathfrak m} \subset \mathfrak m} \rho_{\widetilde{\mathfrak m}}$ introduced in the introduction.
\end{defin}

\begin{prop} \label{prop-typic}
We suppose that for all $\pi \in \AC_{K}(\mathfrak m)$, the Galois representation
$\sigma(\pi^\oo)$ is semi-simple. Then $\mathfrak m$ is KHT-typic for $K$.
\end{prop}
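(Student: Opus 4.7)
The plan is to follow Scholze's argument in \cite{scholze-LT} \S 5. Set $T := \Tm(K)_{\mathfrak m}$, $H := H^{d-1}(\sh_{K, \bar \eta}, \overline \Zm_l)_{\mathfrak m}$ and $\rho := \rho_{\mathfrak m, K}$. Define
$$\sigma_{\mathfrak m, K} := \Hom_{T[\gal_{F,S}]}(\rho, H),$$
and consider the natural evaluation map
$$\Phi : \sigma_{\mathfrak m, K} \otimes_T \rho \longrightarrow H.$$
The goal is to show that $\Phi$ is an isomorphism of $T[\gal_{F,S}]$-modules. Since $\overline \rho_{\mathfrak m}$ is absolutely irreducible, standard results of Carayol and Nyssen--Rouquier imply that $\rho$ is free of rank $d$ over the local ring $T$ and that $\End_{T[\gal_{F,S}]}(\rho) = T$; theorem \ref{thm-recall}(i) guarantees that $H$ is torsion free over $\overline \Zm_l$.

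First I would check that $\Phi[1/l]$ is an isomorphism. From the decomposition
$$H \otimes_{\overline \Zm_l} \overline \Qm_l \simeq \bigoplus_{\pi \in \AC_K(\mathfrak m)} (\pi^{\infty})^K \otimes \sigma(\pi^{\infty}),$$
the hypothesis that each $\sigma(\pi^{\infty})$ is semi-simple, combined with the irreducibility of the Galois representation $\rho_{\Pi, l}$ attached to the weak transfer $\Pi$ of $\pi$, forces $\sigma(\pi^{\infty}) \simeq \rho_{\Pi, l}^{\oplus m_{\pi}}$ for some integer $m_{\pi} \geq 0$. If $\widetilde{\mathfrak m}_{\pi} \subset \mathfrak m$ denotes the prime of $T$ attached to $\pi$, the defining property of $\rho_{\mathfrak m, K}$ together with absolute irreducibility of $\overline \rho_{\mathfrak m}$ yield $\rho \otimes_T T[1/l]/\widetilde{\mathfrak m}_{\pi} \simeq \rho_{\Pi, l}$. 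Putting these facts together one obtains $\sigma_{\mathfrak m, K}[1/l] \simeq \bigoplus_{\pi} (\pi^{\infty})^K \otimes \overline \Qm_l^{\oplus m_{\pi}}$ and that $\Phi[1/l]$ is an isomorphism component by component. In particular $H[1/l]$ is $\rho[1/l]$-isotypic as a $T[\gal_{F,S}]$-module.

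The main obstacle is the integral refinement. Injectivity of $\Phi$ is formal: the source is torsion free over $\overline \Zm_l$ (as $\rho$ is $T$-free of rank $d$ and $\sigma_{\mathfrak m, K}$ embeds into $H$), and $\Phi[1/l]$ is injective. For surjectivity, the key input is the following formal lemma, implicit in \cite{scholze-LT}: if $T$ is a Noetherian local $\overline \Zm_l$-algebra, $\rho : \gal_{F,S} \to \GL_d(T)$ is a deformation of an absolutely irreducible residual representation, and $H$ is a finitely generated $\overline \Zm_l$-torsion-free $T[\gal_{F,S}]$-module whose generic fiber $H[1/l]$ is $\rho[1/l]$-isotypic, then the evaluation $\Phi : \Hom_{T[\gal_{F,S}]}(\rho, H) \otimes_T \rho \to H$ is an isomorphism. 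This lemma ultimately rests on $\End_{T[\gal_{F,S}]}(\rho) = T$ together with a Nakayama-style reduction applied to the $l$-power torsion $T$-module $\Coker \Phi$: one verifies that $\Coker \Phi \otimes_T T/\mathfrak m = 0$ by using that the mod-$\mathfrak m$ reduction $\overline \rho_{\mathfrak m}$ is absolutely irreducible, hence any $T/\mathfrak m[\gal_{F,S}]$-submodule of $H/\mathfrak m H$ is automatically $\overline \rho_{\mathfrak m}$-isotypic and thus in the image of $\Phi \bmod \mathfrak m$. The role of the semi-simplicity hypothesis is precisely to guarantee the $\rho[1/l]$-isotypicity of $H[1/l]$; the rest of the argument is essentially formal, and I expect the Nakayama step transferring information from the generic to the integral fiber to be the subtlest point.
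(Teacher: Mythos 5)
Your proposal follows essentially the same route as the paper: invoke Scholze's Proposition 5.4 to reduce the integral typicity to the $\overline \Qm_l$-statement, then verify $\overline \Qm_l$-typicity from the automorphic decomposition of $H^{d-1}(\sh_{K,\bar \eta},\overline \Qm_l)_{\mathfrak m}$ together with the semi-simplicity hypothesis (which makes $\sigma(\pi^\oo)$ a multiple of the irreducible $\rho_{\widetilde{\mathfrak m}}$); the paper simply quotes Scholze's result as a black box where you sketch its internals. One caution about that sketch: the assertion that any $T/\mathfrak m[\gal_{F,S}]$-submodule of $H/\mathfrak m H$ is automatically $\overline \rho_{\mathfrak m}$-isotypic does not follow from absolute irreducibility alone — torsion-freeness and Brauer--Nesbitt give that all Jordan--H\"older factors of $H/\mathfrak m H$ are $\overline \rho_{\mathfrak m}$, but non-split self-extensions could still occur, so a submodule need not split as a direct sum of copies of $\overline \rho_{\mathfrak m}$; the genuine Nakayama step in Scholze's argument is more delicate. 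Since you ultimately rely on the cited lemma (as the paper does), this inaccuracy does not affect the correctness of the plan, but the sketch as written would not stand alone.
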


\begin{proof}
By proposition 5.4 of \cite{scholze-LT} it suffices to deal with 
$\overline \Qm_l$-coefficients. From \cite{harris-taylor} proposition VII.1.8 and the
semi-simplicity hypothesis, then $\sigma(\pi^\oo) \simeq 
\widetilde R(\pi)^{\bigoplus n(\pi)}$
where $\widetilde R(\pi)$ is of dimension $d$. We then write
$$(\pi^\oo)^K \otimes_{\overline \Qm_l} R(\pi)\simeq 
(\pi^\oo)^K \otimes_{\Tm(K)_{\mathfrak m,\overline \Qm_l}} 
(\Tm(K)_{\mathfrak m,\overline \Qm_l})^d,$$
and $(\pi^\oo)^K \otimes_{\overline \Qm_l} \sigma(\pi^\oo) \simeq
((\pi^\oo)^K)^{\bigoplus n(\pi)} \otimes_{\Tm(K)_{\mathfrak m,\overline \Qm_l}} 
(\Tm(K)_{\mathfrak m,\overline \Qm_l})^d$
and finally 
$$H^{d-1}(\sh_{K,\bar \eta},\overline \Qm_l)_{\mathfrak m} \simeq
\sigma_{\mathfrak m,K,\overline \Qm_l}  \otimes_{\Tm(K)_{\mathfrak m,\overline \Qm_l}} 
(\Tm(K)_{\mathfrak m,\overline \Qm_l})^d,$$
with $\sigma_{\mathfrak m,K,\overline \Qm_l}  \simeq \bigoplus_{\pi \in \AC_{K}(\mathfrak m)} 
((\pi^{\oo})^I)^{\bigoplus n(\pi)}$. The result then follows from
\cite{harris-taylor} theorem VII.1.9 which insures that $R(\pi) \simeq
\rho_{\widetilde{\mathfrak m}}$, if $\widetilde{\mathfrak m}$ is the prime ideal
associated to $\pi$,
\end{proof}

In particular with the notations of \S \ref{para-oolevel}, we have an isomorphism
$$M_{K^v}(\sigma^v) \otimes_{\Tm(\sigma_v)_{\mathfrak m}} \rho(\sigma^v)_{\mathfrak m}
\longrightarrow S_{K^v}(\sigma^v)_{\mathfrak m}.$$
%

%
%

\subsection{Lattices}
\label{para-lattice}

For each place $w \in S \setminus \{ v \}$ choose an inertial type $\tau_w$
and a lattice $\sigma^0(\tau_w)$ in $\sigma_L(\tau_w)$ and let
$\sigma^v:=\otimes_{w \in S \setminus \{ v \} } \sigma^0(\tau_w)$ the corresponding
representation of $K^v=\prod_{w \in S \setminus \{ v \} } K_w$.

Recall that $\OC_L$ is the ring of integers of a large enough finite extension
$L$ of $\Qm_l$.
Let $\lambda: \Tm_{\mathfrak m} \longrightarrow \OC_L$ be a system\footnote{Note that $\lambda \mod
\varpi_L$ is given by $\mathfrak m$, i.e. the modulo $l$ reduction of $\lambda$ is fixed.} of
Hecke eigenvalues corresponding to some minimal prime ideal of 
$\Tm_{\mathfrak m}:=\Tm(\sigma^v \sigma^0(\tau_v))_{\mathfrak m}$
for some lattice $\sigma^0(\tau_v)$ of $\sigma_L(\tau_v)$ define just after: 
note that
$\Tm_{\mathfrak m}$ does not depend on this choice. 
Let  $\Pi_v:=(M_{K^v}(\sigma^v)^*[1/l])[\lambda]$ which by strong multiplicity one 
is an irreducible representation of $\GL_d(F_v)$. 
We suppose that $\sigma_L(\tau_v)$ appears with multiplicity one in 
$(\Pi_v)_{|K_v}$. The eigenspace 
$M_{K^v}(\sigma^v)^*[\lambda]$ is a lattice in $\Pi_v$ and
$$\sigma_\lambda(\tau_v):=\sigma_L(\tau_v)\cap M_{K^v}(\sigma^v)^*[\lambda]$$
is $K_v$-stable lattice in $\sigma_L(\tau_v)$.

For any lattice $\sigma^1(\tau_v)$ of $\sigma_L(\tau_v)$,
it follows from (\ref{eq-Msigma-v}) that there is a natural isomorphism
\begin{equation} \label{eq-iso-M-hom}
\hom_{K_v}(\sigma^1(\tau_v),\sigma_\lambda(\tau_v))^* \simeq 
M_{K^v}(\sigma^v \sigma^1(\tau_v))/\lambda.
\end{equation}
%
Consider some injection 
$$\iota: \sigma_1(\tau_v) \hookrightarrow \sigma_2(\tau_v)$$
between two lattices of $\sigma_L(\tau_v)$. 
Then $\iota$ induces a injective map
$$
M_{K^v}(\sigma^v\sigma_1(\tau_v)) 
\hookrightarrow M_{K^v}(\sigma^v \sigma_2(\tau_v)).$$

\begin{lem}
Suppose there exists $\lambda_0$ so that the natural map induces by $\iota$
\begin{equation} \label{eq-Moo}
M_{K^v}(\sigma^v\sigma_1(\tau_v))/\lambda_0 \hookrightarrow
M_{K^v}(\sigma^v\sigma_2(\tau_v))/\lambda_0
\end{equation}
is an isomorphism, then this is true for every $\lambda \equiv \lambda_0 \mod \varpi_L$. 
\end{lem}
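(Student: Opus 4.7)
The plan is to reduce the statement to showing that the cokernel of $\iota_*$ on global sections vanishes outright, via a one-line Nakayama argument over the local ring $\Tm_{\mathfrak m}$.

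Writing $M_i := M_{K^v}(\sigma^v\sigma_i(\tau_v))$ for $i=1,2$ and $C := \Coker(M_1 \hookrightarrow M_2)$, I would first observe that since $\sigma_1(\tau_v)$ and $\sigma_2(\tau_v)$ are two $\OC_L$-lattices inside the same $L$-representation $\sigma_L(\tau_v)$, the map $\iota$ becomes an isomorphism after inverting $\varpi_L$. Hence $C$ is killed by some power of $\varpi_L$ and is, in particular, a finitely generated module over the local ring $\Tm_{\mathfrak m}$. Setting $\mathfrak p_{\lambda_0} := \ker\bigl(\lambda_0\colon \Tm_{\mathfrak m} \to \OC_L\bigr)$, right-exactness of $-\otimes_{\Tm_{\mathfrak m},\lambda_0}\OC_L$ applied to $0 \to M_1 \to M_2 \to C \to 0$ identifies the hypothesis (\ref{eq-Moo}) at $\lambda_0$ with the relation $C = \mathfrak p_{\lambda_0}C$.

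The crucial step is then immediate: since $\lambda_0$ is a local homomorphism of local rings, $\mathfrak p_{\lambda_0} \subset \mathfrak m$, so $C = \mathfrak p_{\lambda_0}C \subset \mathfrak m C \subset C$, which forces $C = \mathfrak m C$ and hence $C = 0$ by Nakayama's lemma. Thus $\iota_*$ is itself an isomorphism of $\Tm_{\mathfrak m}$-modules, and remains so after reducing modulo any $\lambda$, \emph{a fortiori} for $\lambda \equiv \lambda_0 \mod \varpi_L$.

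I do not foresee a real obstacle here, as the whole argument is a one-step Nakayama reduction. The only point that needs care is the identification $M_i/\lambda \simeq M_i \otimes_{\Tm_{\mathfrak m},\lambda}\OC_L = M_i/\mathfrak p_\lambda M_i$, which uses that $\Tm_{\mathfrak m}/\mathfrak p_\lambda \hookrightarrow \OC_L$ is a finite extension of $\OC_L$-algebras. Note that the argument in fact proves slightly more than claimed: the vanishing of $C$ never uses the residue class of $\lambda$, so the conclusion of the lemma holds for every $\lambda \colon \Tm_{\mathfrak m} \to \OC_L$; the restriction $\lambda \equiv \lambda_0 \mod \varpi_L$ is only needed in order for $\lambda$ to factor through $\Tm_{\mathfrak m}$ in the first place.
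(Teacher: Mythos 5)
Your Nakayama argument is correct, and it is genuinely different from the route taken in the paper. The paper never invokes the local ring structure of $\Tm_{\mathfrak m}$ through the cokernel; instead it works entirely at the level of the cyclic $\OC_L$-modules $M_{K^v}(\sigma^v\sigma_i(\tau_v))/\lambda \simeq \OC_L$ furnished by the multiplicity-one hypothesis on the $K_v$-type: any $\OC_L$-linear endomorphism of $\OC_L$ is multiplication by some power of $\varpi_L$, so it is either an isomorphism or vanishes after reduction modulo $\varpi_L$, and the ideals $(\mathfrak p_\lambda,\varpi_L)$ and $(\mathfrak p_{\lambda_0},\varpi_L)$ both equal $\mathfrak m$, so the two mod-$\varpi_L$ reductions of the map literally coincide. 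Your argument sidesteps the DVR/multiplicity-one dichotomy entirely by treating the cokernel $C$ as a finitely generated $\varpi_L$-power-torsion $\Tm_{\mathfrak m}$-module and applying Nakayama to $C = \mathfrak p_{\lambda_0}C \subset \mathfrak m C$. That is cleaner, and it has the modest advantage of not resting on $M_i/\lambda \simeq \OC_L$, which is where the multiplicity-one assumption on $\sigma_L(\tau_v)$ in $\Pi_v$ enters the paper's proof. The trade-off is small, though: the paper's conclusion, applied at $\mathfrak m = (\mathfrak p_{\lambda_0},\varpi_L)$ together with Nakayama, already yields $C=0$, so the two proofs are proving the same thing by different bookkeeping. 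Your final remark is also correct: the congruence $\lambda \equiv \lambda_0 \bmod \varpi_L$ is automatic for any $\OC_L$-algebra map $\lambda\colon\Tm_{\mathfrak m}\to\OC_L$, since $\Tm_{\mathfrak m}$ is local with residue field $\Fm_L$, so the condition in the statement is a normalization rather than a restriction. One small point on your finiteness remark: the key fact is simply that $\lambda$ is a surjective $\OC_L$-algebra map (hence $\Tm_{\mathfrak m}/\mathfrak p_\lambda \simeq \OC_L$), which makes the identification $M_i/\lambda = M_i/\mathfrak p_\lambda M_i = M_i\otimes_{\Tm_{\mathfrak m},\lambda}\OC_L$ immediate, with no appeal to finiteness of the extension needed.
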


\begin{proof}
For any $\lambda$ the natural inclusion
$$\OC_L \simeq M_{K^v}(\sigma^v\sigma_1(\tau_v))/\lambda \hookrightarrow
M_{K^v}(\sigma^v\sigma_2(\tau_v))/\lambda \simeq \OC_L$$
is either an isomorphism or multiplication by some power of $\varpi_L$ and so zero modulo $\varpi_L$. 
By hypothesis 
modulo $\varpi_L$, the natural map $\iota$ induces an isomorphism
$$M_{K^v}(\sigma^v\sigma_1(\tau_v))/(\lambda_0,\varpi_L) \simeq
M_{K^v}(\sigma^v\sigma_2(\tau_v))/(\lambda_0,\varpi_L).$$
As the following diagram is commutative
$$\xymatrix{
R_\oo^{\tau_v} \ar[r]^{\lambda_0} \ar[dr]^{\lambda}
& \Zm_L \ar@{->>}[r] & \Fm_L \\
& \Zm_L \ar@{->>}[ur]
}$$
with common kernel $\mathfrak m$, we then deduce that
$\iota$ induces an isomorphism
$$M_{K^v}(\sigma^v\sigma_1(\tau_v))/(\lambda,\varpi_L) \simeq
M_{K^v}(\sigma^v\sigma_2(\tau_v))/(\lambda,\varpi_L),$$
so that the natural map induces by $\iota$
$$M_{K^v}(\sigma^v\sigma_1(\tau_v))/\lambda \hookrightarrow
M_{K^v}(\sigma^v\sigma_2(\tau_v))/\lambda$$
is also an isomorphism.
%
\end{proof}

%
%
%

%
%

\begin{prop} \label{prop-BLC} 
Consider two automorphic irreducible representations $\Pi$ and $\Pi'$ associated
to respectively two systems\footnote{with modulo $l$ reduction fixed by $\mathfrak m$} of Hecke eigenvalues 
$\lambda,\lambda':\Tm_{\mathfrak m} \longrightarrow \OC_L$. 
We denote by
$$\Pi_v:=(M_{K^v}(\sigma^v)^*[1/l])[\lambda] \quad \hbox{and} \quad 
\Pi'_v:=(M_{K^v}(\sigma^v)^*[1/l])[\lambda'].$$
Consider a $K$-type $\tau_v$ such that $\sigma_L(\tau_v)$ appears with
multiplicity one in $(\Pi_v)_{|K_v}$ (resp. in $(\Pi'_v)_{|K_v}$) and let 
$\sigma_\lambda(\tau_v)$ (resp. $\sigma_{\lambda'}(\tau_v)$) 
be the stable lattice in 
$\sigma_L(\tau_v)$ defined by
$$\sigma_\lambda(\tau_v):= \sigma_L(\tau_v) \cap M_{K^v}(\sigma^v)^*[\lambda], 
\quad
\hbox{resp. } \sigma_{\lambda'}(\tau_v):= \sigma_L(\tau_v) \cap 
M_{K^v}(\sigma^v)^*[\lambda'].$$
Then these two lattices are homothetic.
\end{prop}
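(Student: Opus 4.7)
The plan is to reduce the claim to the preceding lemma via the adjunction (\ref{eq-iso-M-hom}) combined with the multiplicity one hypothesis on $\sigma_L(\tau_v)$. After replacing $\sigma_{\lambda'}(\tau_v)$ by a suitable power of $\varpi_L$ times itself (an operation that does not alter its homothety class), we may assume there is a saturated inclusion of $K_v$-stable $\OC_L$-lattices
$$\iota:\sigma_{\lambda'}(\tau_v)\hookrightarrow \sigma_\lambda(\tau_v) \subset \sigma_L(\tau_v),$$
where \emph{saturated} means $\sigma_{\lambda'}(\tau_v)\not\subset\varpi_L\sigma_\lambda(\tau_v)$. As $\sigma_L(\tau_v)$ is an irreducible $K_v$-representation occurring with multiplicity one in both $(\Pi_v)_{|K_v}$ and $(\Pi'_v)_{|K_v}$, Schur's lemma gives $\End_{K_v}(\sigma_\lambda(\tau_v))=\OC_L$, and by saturation of $\iota$ we similarly obtain $\hom_{K_v}(\sigma_{\lambda'}(\tau_v),\sigma_\lambda(\tau_v))=\OC_L\cdot\iota$; pre-composition with $\iota$ sends $\id \mapsto \iota$ and is therefore an isomorphism of rank one free $\OC_L$-modules.

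Dualising this identification and invoking (\ref{eq-iso-M-hom}) for $\lambda$, the natural injection
$$M_{K^v}(\sigma^v\sigma_{\lambda'}(\tau_v))/\lambda \hookrightarrow M_{K^v}(\sigma^v\sigma_\lambda(\tau_v))/\lambda$$
induced by $\iota$ is in fact an isomorphism. Since $\lambda\equiv\lambda'\bmod\varpi_L$ by hypothesis, the previous lemma applied with $\lambda_0=\lambda$ yields that the analogous map
$$M_{K^v}(\sigma^v\sigma_{\lambda'}(\tau_v))/\lambda' \simto M_{K^v}(\sigma^v\sigma_\lambda(\tau_v))/\lambda'$$
is again an isomorphism. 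Using (\ref{eq-iso-M-hom}) for $\lambda'$ on both sides and dualising back, this translates into the statement that pre-composition with $\iota$ defines an isomorphism
$$\hom_{K_v}(\sigma_\lambda(\tau_v),\sigma_{\lambda'}(\tau_v)) \simto \End_{K_v}(\sigma_{\lambda'}(\tau_v))=\OC_L.$$

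In particular, there exists $j\in\hom_{K_v}(\sigma_\lambda(\tau_v),\sigma_{\lambda'}(\tau_v))$ with $j\circ\iota=\id_{\sigma_{\lambda'}(\tau_v)}$, so $\iota$ is split. Since $\sigma_\lambda(\tau_v)$ and $\sigma_{\lambda'}(\tau_v)$ have the same $L$-rank, a split $\OC_L$-linear injection between them must be an isomorphism, yielding $\sigma_\lambda(\tau_v)=\sigma_{\lambda'}(\tau_v)$ after our initial rescaling; this is exactly the asserted homothety. The substantive content is entirely in the preceding lemma, so there is no real obstacle here beyond carefully tracking the duality between the $M_{K^v}$-side and the $\hom_{K_v}$-side and exploiting multiplicity one to force every relevant $\OC_L$-module to have rank one, so that \emph{is an isomorphism} can be verified on a single generator.
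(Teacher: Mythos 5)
Your proof is correct and follows essentially the same route as the paper: normalize to a saturated inclusion between the two lattices, observe via Schur's lemma and multiplicity one that the relevant $\hom$-spaces are rank-one over $\OC_L$, translate through the adjunction (\ref{eq-iso-M-hom}), and invoke the preceding lemma to transport the isomorphism condition between $\lambda$ and $\lambda'$. The only cosmetic difference is that you argue directly (producing a splitting $j\circ\iota=\id$ and concluding $\iota$ is an isomorphism) whereas the paper argues by contradiction, assuming $\sigma_{\lambda'}=\sigma_+\supsetneq\sigma_\lambda$ and exhibiting an incompatible factor of $\varpi_L^\delta$; the mathematical content is the same.
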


\begin{proof}
Up to homothety any stable lattice $\sigma_+$ is such that
$$
\sigma_0:=\sigma_\lambda(\tau_v) 
\subsetneq \sigma_+ \subsetneq \varpi_L^{-\delta} \sigma_\lambda(\tau_v),$$
where $\delta$ is minimal and $\sigma_0 \not \subset \varpi_L \sigma_+$.
The natural morphism
$$\OC_L \simeq \hom_{\GL_d(\OC_v)}(\sigma_+,\sigma_\lambda(\tau_v)) \longrightarrow \hom_{\GL_d(\OC_v)}(\sigma_0,
\sigma_\lambda (\tau_v))\simeq \OC_L$$
is given by multiplication by $\varpi_L^\delta$,
and so, from the isomorphism (\ref{eq-iso-M-hom}), $\iota$ induces
\begin{equation} \label{eq-Moo2}
M_{K^v}(\sigma^v \sigma_0)/\lambda=
\varpi_L^\delta M_{K^v}(\sigma^v \sigma_+)/\lambda.
\end{equation}
If $\sigma_{\lambda'}(\tau_v)$ were equal to $\sigma_+$ then the morphism
induced by $\iota$
$$\hom_{\GL_d(\OC_v)}(\sigma_+,\sigma_{\lambda'}(\tau_v)) \hookrightarrow \hom_{\GL_d(\OC_v)}(\sigma_0,\sigma_{\lambda'}(\tau_v))$$
would be an isomorphism and by (\ref{eq-iso-M-hom}), the
natural injection would give the equality
$$M_{K^v}(\sigma^v \sigma_0)/\lambda'=
M_{K^v}(\sigma^v \sigma_+)/\lambda',$$
which, by the previous lemma, is not compatible with (\ref{eq-Moo2}).
%
\end{proof}

Concerning stable lattice of an irreducible admissible representation $\Pi_v$ of $\GL_d(F_v)$ defined
over some finite extension $L$ of $\Qm_l$. Recall
that the modulo $\varpi_L$ reduction of $\Pi_v$ has only one irreducible generic subquotient
$\tau_{gen}$. There exists then, cf. for example \cite{EGS} lemma 4.1.1, an unique stable lattice
$\Gamma^{gen}$ such that the socle of $\Gamma^{gen} \otimes_{\OC_L} \OC_L/\varpi_L$
is isomorphic to $\tau_{gen}$. Consider then any other stable lattices $\Gamma_1,\Gamma_2$ of 
$\Pi_v$ such that
$$\Gamma_1 \hookrightarrow \Gamma_2 \twoheadrightarrow T \neq (0)$$
with $\Gamma_1 \not \subset \varpi_L \Gamma_2$ and $T$ is $l$-torsion.

\begin{lem} \label{lem-lattice-gen}
Suppose that the socle of $T$ as a $\overline \Fm_l[\GL_d(F_v)]$-module
is isomorphic to $\tau_{gen}$.  Then we have $\Gamma_1 \simeq \Gamma^{gen}$.
\end{lem}

\begin{proof}
Up to homothety, we have 
$$\xymatrix{
T_1 \ar@{^{(}->>}[rr] & & T_2 \ar@{->>}[d] \\
\Gamma_1 \ar@{^{(}->}[r] \ar@{->>}[u] & \Gamma_2 \ar@{->>}[r]  \ar@{->>}[ur] & T \\
\Gamma^{gen} \ar@{^{(}->}[u] \ar@{^{(}->}[ur]
}$$
with $T_1 \hookrightarrow T_2 \twoheadrightarrow T$ which is exact. 

We suppose, by absurdity, that $T_1 \neq 0$: as the socle
of $\overline \Gamma_1:=\Gamma_1 \otimes_{\OC_L} \OC_L/\varpi_L$ is generic and the others 
subquotients are non generic, we deduce that $T_1$ has also a generic subquotient. As by hypothesis 
$T$ also has a generic subquotient and 
$\overline \Gamma_2:=\Gamma_2 \otimes_{\OC_L} \OC_L/\varpi_L$ owns exactly one
generic irreducible subquotient, then $\Gamma^{gen} \hookrightarrow \Gamma_2$ factors through
$\varpi_L \Gamma_2$. We denote by $\overline V$ the kernel 
$\overline \Gamma_1 \longrightarrow \overline \Gamma_2$ and 
$\overline W:=\overline \Gamma_1/\overline V$: 
$$\xymatrix{
\overline V \ar@{^{(}->}[r] \ar[dr]^0 & \overline \Gamma_1 \ar@{->>}[r]  \ar[d] & \overline W \ar[dl]
\ar@{-->}[dll]^\sim \\
\overline W \ar@{^{(}->}[r] & \overline \Gamma_2
}$$
Note that, as the socle of $T$ is generic,
then the socle of $\overline V$ is also generic. As $\overline \Gamma^{gen} \longrightarrow
\overline \Gamma_2$ is zero, we deduce that the image of 
$\overline \Gamma^{gen} \longrightarrow \overline \Gamma_1$ is contained in $\overline V$
which is impossible as the socle of both $\overline V$ and those of $\overline \Gamma^{gen}$
are generic and non isomorphic. Finally $T_1=0$ and $\Gamma^1 \simeq \Gamma^{gen}$.
\end{proof}

\section{Proof of Ihara's lemma}
\label{para-3}

\subsection{Supersingular locus as a zero dimensional Shimura variety}
\label{para-geo}

As explained in the introduction, we follow the strategy of \cite{boyer-ihara} which
consists to transfer the genericity property of Ihara's lemma concerning
$\overline G$ to the genericity of the cohomology of KHT-Shimura varieties.

Let $\overline G$ be a similitude
group as in the introduction such that moreover there exists a prime number
$p_0$ split in $E$ and $v_0^+$ a place of $F^+$ above $p_0$, identified as before
to a place $v_0$ of $F$, such that $\overline B_{v_0}$ is a division algebra:
in particular $v_0 \neq v$.
Consider then, with the usual abuse of notation, $G/\Qm$ such that 
$G(\Am_\Qm^{\oo,v_0}) \simeq \overline G(\Am_\Qm^{\oo,v_0})$ with 
$G(F_{v_0}) \simeq \GL_d(F_{v_0})$ and $G(\Rm)$ of signatures $(1,n-1),(0,n)^r$.
The KHT Shimura variety $\sh_{K,v_0} \rightarrow \spec \OC_{v_0}$ associated to $G$ 
with level $K$, has a Newton stratification of its special fiber with supersingular locus
$$\sh_{K,\bar s_{v_0}}^{=d}=\coprod_{i \in  \ker^1(\Qm,G)} 
\sh_{K,\bar s_{v_0},i}^{=d}.$$
For a equivariant sheaf $\FC_{K,i}$ on 
$\sh_{K^{v}(\oo),\bar s_{v_0},i}^{=d}$ seen as  acompatible system over
$\sh_{K^{v}K_{v},\bar s_{v_0},i}^{=d}$ for $K_{v}$ describing the set of open
compact subgroups of $\GL_d(\OC_{v})$,
its fiber at a compatible system $z_{K^{v}(\oo),i}$
of supersingular point $z_{K^{v}K_{v},i}$, has an action of
$\overline G(\Am_\Qm^{\oo}) \times \GL_d(F_{v})^0$ where $\GL_d(F_{v})^0$ 
is the kernel of the valuation of the determinant so that, 
cf. \cite{boyer-invent2} proposition 5.1.1, 
as a $\GL_d(F_{v})$-module, we have
$$H^0(\sh_{K^{v}(\oo),\bar s_{v_0},i}^{=d},\FC_{K^{v}(\oo),i}) \simeq \Bigl (
\ind_{\overline G(\Qm)}^{\overline G(\Am^{\oo,v}) \times \Zm} z_{K^{v_0}(\oo),i}^* \FC_{K^{v_0}(\oo),i} \Bigr )^{K^{v}},$$
with $\delta \in \overline G(\Qm) \mapsto (\delta^{\oo,v_0},\val \circ \rn( \delta_{v_0})) 
\in \overline G(\Am^{\oo,v_0,v}) \times \Zm$ and where the action of
$g_{v_0} \in \GL_d(F_{v_0})$ is given by those of 
$(g_0^{-\val \det g_{v_0}} g_{v_0},\val \det g_{v_0}) \in \GL_d(F_{v_0})^0 \times \Zm$
where $g_0 \in \GL_d(F_{v_0})$ is any fixed element with $\val \det g_0=1$. 
Moreover, cf. \cite{boyer-invent2} corollaire 5.1.2, if $z_{K^{v_0}(\oo),i}^* \FC_{K^{v_0}(\oo),i}$ is provided with an action
of the kernel $(D_{v_0,d}^\times)^0$ of the valuation of the reduced norm, action compatible with those
of $\overline G(\Qm) \hookrightarrow D_{v_0,d}^\times$, then as a $G(\Am^\oo)$-module, we have
\begin{equation} \label{h0-ss}
H^0(\sh_{K^{v}(\oo),\bar s_{v_0},i}^{=d},\FC_{K^{v}(\oo),i}) \simeq 
\Bigl ( \CC^\oo(\overline G(\Qm) \backslash 
\overline G(\Am^{\oo}),\Lambda)
\otimes_{D_{v_0,d}^\times}  \ind_{(D_{v_0,d}^\times)^0}^{D_{v_0,d}^\times} z_i^* \FC_{\IC,i} \Bigr )^{K^v}
\end{equation}

In particular, cf. lemma 2.3.1 of \cite{boyer-ihara}, 
let $\overline \pi$ be 
an irreducible sub-$\overline \Fm_l$-representation of
$\mathcal C^\oo(\overline G(\Qm) \backslash \overline G(\Am)/K^{v},\overline \Fm_l)_{\mathfrak m}$ for $\mathfrak m$ such that $\overline \rho_{\mathfrak m}$
is irreducible. Write its local component $\bar \pi_{v_0} \simeq \pi_{v_0}[s]_D$ 
with $\pi_{v_0}$
an irreducible cuspidal representation of $\GL_g(F_{v_0})$ with $d=sg$. Then
$(\overline \pi^{v_0})^{K^v}$ is a sub-representation of 
$H^0(\sh^{=d}_{K^{v}(\oo),\bar s_{v_0}},HT(\pi_{v_0}^\vee,s))_{\mathfrak m} 
\otimes_{\overline \Zm_l} \overline \Fm_l$ and,
cf. proposition 2.3.2 of \cite{boyer-ihara}, a 
sub-$\overline \Fm_l$-representation of 
$H^{d-1}(\sh_{K^{v}(\oo),\bar \eta_{v_0}},\overline \Fm_l)_{\mathfrak m}$.
Indeed, cf. theorem \ref{thm-recall},
\begin{itemize}
\item by the main result of \cite{boyer-jep2}, as $l>d \geq 2$ and
$\overline \rho_{\mathfrak m}$ is irreducible, then
$\mathfrak m$ is KHT free so that hypothesis (H1) of
\cite{boyer-ihara} is fulfilled. 

\item Theorem \ref{thm-recall} gives us that the filtration of 
$H^{d-1}(\sh_{K^{v}(\oo),\bar \eta_{v_0}},\overline \Zm_l)_{\mathfrak m}$
induced by the filtration of the nearby cycles at $v_0$, 
is strict.\footnote{In \cite{boyer-ihara} hypothesis (H3) was introduced for this property to be true.}
\end{itemize}
Finally if the analog of Ihara's lemma for 
$H^{d-1}(\sh_{K^{v}(\oo),\bar \eta},\overline \Fm_l)_{\mathfrak m}$
is true for the action of $\GL_d(F_v)$, 
then this is also the case for $\overline G$. We now focus on the genericity
of irreducible sub-$\GL_d(F_v)$-modules of $H^0(\sh_{K^v(\oo),\bar \eta},\overline \Fm_l)_{\mathfrak m}$ using the nearby cycles at the place $v$.

\subsection{Level raising}
\label{sec-hecke}

To a cohomological minimal prime 
ideal $\widetilde{\mathfrak m}$ of $\Tm(K)$, which corresponds to a maximal
ideal of $\Tm(K)[\frac{1}{l}]$, is associated both a
near equivalence class of $\overline \Qm_l$-automorphic representation 
$\Pi_{\widetilde{\mathfrak m}}$ and a Galois representation
$$\rho_{\widetilde{\mathfrak m}}:G_F:=\gal(\bar F/F) \longrightarrow \GL_d(\overline \Qm_l)$$ 
such that the eigenvalues of the Frobenius morphism at an unramified place $w$ are given by the 
Satake parameters of the local component $\Pi_{\widetilde{\mathfrak m},w}$ of  
$\Pi_{\widetilde{\mathfrak m}}$. The semi-simple class $\overline{\rho}_{\mathfrak m}$
of the reduction modulo $l$ of 
$\rho_{\widetilde{\mathfrak m}}$ depends only of the maximal
ideal $\mathfrak m$ of $\Tm^S_K$ containing $\widetilde{\mathfrak m}$.

We now allow infinite level at $v$ and we denote by $\Tm(K^v(\oo))$ the associated 
Hecke algebra. We fix a maximal ideal $\mathfrak m$ 
in $\Tm(K^v(\oo))$ such that the associated Galois representation 
$\rbar_{\fm}: G_F\ra\GL_d(\Fm)$ 
is irreducible.
%

\rem For every minimal prime $\widetilde{\mathfrak m} \subset \mathfrak m$,
note that $\Pi_{\widetilde{\mathfrak m},v}$ looks like $\st_{s_1}(\pi_{v,1}) \times
\cdots \times \st_{s_r}(\pi_{v,r})$ where $\pi_{v,i}$ is an irreducible cuspidal
representation of $\GL_{g_i}(F_v)$ and $s_1 g_1+ \cdots + s_rg_r=d$.

Let $\SC_v(\mathfrak m)$ be the supercuspidal support of the modulo $l$ reduction
of any $\Pi_{\widetilde m,v}$ in the near equivalence class associated to a 
minimal prime ideal $\widetilde{\mathfrak m} \subset \mathfrak m$. 
Recall that $\SC_v(\mathfrak m)$ is a multi-set, i.e. a set with multiplicities 
which only depends on $\mathfrak m$. We decompose it according to the set
of Zelevinsky lines: as we supposed $q_v \equiv 1 \mod l$
then every Zelevinsky line is reduced to a single equivalence class of an irreducible 
(super)cuspidal $\overline \Fm_l$-representations $\varrho$ of some 
$\GL_{g(\varrho)}(F_{v})$ with $1 \leq g(\varrho) \leq d$. 
$$\SC_v(\mathfrak m)=\coprod_{1 \leq g \leq d} 
\coprod_{\varrho \in \cusp_{\overline \Fm_l}(g,v)} 
\SC_{\varrho}(\mathfrak m),$$
where $\cusp_{\overline \Fm_l}(g,v)$ is the set of irreducible 
cuspidal $\overline \Fm_l$-representations of $\GL_g(F_v)$.

\begin{nota} \label{nota-fact}
We denote by
$l_{\varrho}(\mathfrak m)$ the multiplicity of $\SC_{\varrho}(\mathfrak m)$.
\end{nota}

For $\widetilde{\mathfrak m} \subset \mathfrak m$, the local component 
$\Pi_{\widetilde{\mathfrak m},v}$ of $\Pi_{\widetilde{\mathfrak m}}$ can then be written
as a full induced representation 
${\displaystyle \bigtimes_{1 \leq g \leq d}
\bigtimes_{\varrho \in \cusp_{\overline \Fm_l}(g,v)}}
\Pi_{\widetilde{\mathfrak m},\varrho}$ where each 
$\Pi_{\widetilde{\mathfrak m},\varrho}$ is also a full induced representation
$$\Pi_{\widetilde{\mathfrak m},\varrho} \simeq \bigtimes_{i=1}^{r_\varrho(\widetilde{\mathfrak m})} 
\St_{l_{\varrho,i}(\widetilde{\mathfrak m})}(\pi_{v,i})$$
where $r_l(\pi_{v,i}) \simeq \varrho$, $l_{\varrho,1}(\widetilde{\mathfrak m}) \geq \cdots
\geq l_{\varrho,r_\varrho(\widetilde{\mathfrak m})}(\widetilde{\mathfrak m})$
and
$\sum_{i=1}^r l_{\varrho,i}(\widetilde{\mathfrak m})=l_{\varrho}(\mathfrak m)$.
%
%

Suppose now that there exists $1 \leq g \leq d$ and
 $\varrho \in \cusp_{\overline \Fm_l}(g,v)$ such that
$\min_{\widetilde{\mathfrak m} \subset \mathfrak m} \{ r_\varrho(\widetilde{\mathfrak m}) \}
\geq 2$ and let $l_{\varrho,1}:=\max_{\widetilde{\mathfrak m} \subset \mathfrak m} \{
l_{\varrho,1}(\widetilde{\mathfrak m}) \}$ which is then strictly less than $l_\varrho(\mathfrak m)$. 

\noindent \textbf{Fact from  \cite{boyer-compositio} \S 3}:
for an irreducible cuspidal representation $\pi_v$ such
that its modulo $l$ reduction is isomorphic to $\varrho$,
$H^0(\sh_{K^v(\oo),\bar s_v},P(t,\pi_v))_{\mathfrak m} \otimes_{\overline \Zm_l}
\overline \Qm_l$ is the sum of the
contributions of $\Pi_{\widetilde{\mathfrak m}}$ with $\widetilde{\mathfrak m}
\subset \mathfrak m$ such that $\Pi_{\widetilde{\mathfrak m}}$ is of the following
shape: $\st_t(\pi'_v) \times \psi$ where $\pi'_v$ is an unramified twist of $\pi_v$ and $\psi$
is any representation of $\GL_{d-tg}(F_v)$ whose cuspidal support is not linked
to those of $\st_t(\pi'_v)$.

In particular for every $t >l_{\varrho,1}$, $H^0(\sh_{K^v(\oo),\bar s_v},
P(t,\pi_v))_{\mathfrak m} \otimes_{\overline \Zm_l}
\overline \Qm_l$ is zero, so that, as everything is torsion free,
$$H^0(\sh_{K^v(\oo),\bar s_v},\gr^{l_{\varrho,1}(\varrho)-1}_*(
\gr^1_!(\Psi_{\varrho})))_{\mathfrak m} \otimes_{\overline \Zm_l} \overline \Fm_l
\hookrightarrow H^0(\sh_{K^v(\oo),\bar s_v},\Psi_{K^v(\oo),v}))_{\mathfrak m} 
\otimes_{\overline \Zm_l} \overline \Fm_l.$$
Moreover this subspace, as a $\overline \Fm_l$-representation of $\GL_d(F_v)$,
has a subspace of the following shape 
$\st_{l_1(\varrho)}(\varrho) \times \tau$ where the supercuspidal support of
$\tau$ contains $\varrho$. In particular as $q_v \equiv 1 \mod l$ and $l > d$,
this induced representation has both a generic and a non generic subspace.

We can then conclude that for the genericity property to be true for KHT Shimura
varieties, one needs a level
raising property as in proposition 3.3.1 of \cite{boyer-ihara}.
Hopefully such statements exist under some rather mild hypothesis as for example
the following result of \cite{BLGGT}.

\begin{thm} (\cite{BLGGT} theorem 4.4.1)\label{thm-raising}
Let $l > 2(d+1)$ such that $\zeta_l \not \in F$ and all primes of $F^+$ above $l$ split in $F$.
Let $S$ be a finite set of finite places of $F^+$, including all places above $l$, 
such that each place in $S$ splits in $F$. For each place $w \in S$ choose a place 
$\tilde w$ of F lying over $w$. Let $\mu$ be an algebraic character of $G_{F^+}$ and let
$\overline r:G_F \longrightarrow \GL_d(\overline \Fm_l)$ be a continuous representation such that
\begin{itemize}
\item $(\overline r,\overline \mu)$ is a polarized mod $l$ representation unramified outside $S$
which is potentially diagonalizably automorphic;

\item $\overline r_{|G_{F(\xi_l)}}$ is irreducible.
\end{itemize}
For $w \in S$ let $\rho_w:G_{F_{\tilde w}} \longrightarrow \GL_d(\overline \Zm_l)$ be a lift of
$\overline r_{|G_{F_{\tilde w}}}$. If $w~|~l$, assume further that $\rho_w$ is potentially
diagonalizable and that, for all $\tau:F_{\tilde w} \hookrightarrow \overline \Qm_l$, the set
of Hodge-Tate weight $HT_\tau(rho_w)$ consists of $d$ distincts integers.

Then there exists a regular algebraic cuspidal polarized automorphic representation 
$(\pi,\chi)$ of $\GL_d(\Am_F)$ such that
\begin{itemize}
\item $\overline r_l(\pi) \simeq \overline r$;

\item $r_l(\chi)\epsilon_l^{1-d}=\mu$;

\item $\pi$ has level potentially prime to $l$;

\item $\pi$ is unramified outside $S$;

\item for $w \in S$, we have $\rho_w \simeq r_l(\pi)_{|G_{F_{\tilde w}}}$.
\end{itemize}
\end{thm}

We will use this result as follow. Start from an irreducible automorphic representation $\Pi$ of
$G(\Am)$ which appears in $H^{d-1}(\sh_{K,\bar \eta_v},\LC_\xi)_\mathfrak m$
for some irreducible algebraic representation $\xi$ of $G$. By base change it corresponds to a 
a regular algebraic cuspidal polarized automorphic representation $(\pi,\chi)$ as above.
We suppose that the Galois representation $r$ associated to $\pi$ through the Langlands
correspondance, is such that for every $w ~ | ~ l$, $r_{|G_{F_{\tilde w}}}$ is potentially diagonalizable.
We then choose $\rho_v$ a lift of $\overline r_{|G_{F_v}}$ and for others $w \in S$ we keep
$\rho_w:=r_{|G_{F_{\tilde w}}}$: in particular we do not modify what happens above $w ~|~l$.
The previous theorem gives us then $\Pi'$ an irreducible automorphic representation of $G(\Am)$
which, thanks to proposition III.2.1 (6) of \cite{harris-taylor}, appears in 
$H^{d-1}(\sh_{K,\bar \eta_v},\LC_\xi)_\mathfrak m$ with the same $\xi$ we started from.

\subsection{Various filtrations}
\label{para-filtration}

Recall that
$$H^{d-1}(\sh_{K,\bar \eta_v},\overline \Zm_l)_{\mathfrak m} \otimes_{\overline \Zm_l} 
\overline \Fm_l \simeq 
\overline \sigma_{K} \otimes_{\overline R_{K}} \overline \rho_{K},
$$
with $\overline \sigma_{K}/\overline{\mathfrak m} \simeq \overline 
\rho_{\mathfrak m}$. It can be decomposed as follows.
\begin{itemize}
\item It is the direct sum 
$$\bigoplus_{1 \leq g \leq d} \bigoplus_{\varrho \in \cusp_{\overline \Fm_l}(g,v)}
H^0(\sh_{K,\bar s_v},\Psi_{\varrho})_{\mathfrak m} \otimes_{\overline \Zm_l} 
\overline \Fm_l.$$

\item For some fixed $\overline \Fm_l$-supercuspidal representation
$\varrho$ of $\GL_g(F_v)$, the maximal ideal $\mathfrak m$ gives us at the place 
$v$ an element 
$\overline \PC \in \overline \IC$ and in particular numbers $s_\varrho$ for every irreducible
$\overline \Fm_l$-supercuspidal representation $\varrho$.

\item For $t > s_\varrho$, and $\pi_v$ irreducible cuspidal with modulo $l$
reduction isomorphic to $\varrho$, then
$H^0(\sh_{K^v(\oo),\bar s_v},\PC(t,\pi_v))_{\mathfrak m}=(0)$.
\end{itemize} 
We then have a filtration
of $H^0(\sh_{K^v(\oo),\bar s_v},\Psi_\varrho)_{\mathfrak m}$ coming from
the previous filtration of $\Psi_\varrho$:
$$(0)=\Fil^0_\varrho \subset \Fil^1_\varrho \subset \cdots \subset 
\Fil^{\frac{s_\varrho(s_\varrho+1)}{2}}_\varrho,$$
where for $k=s_\varrho+(s_\varrho-1)+\cdots+(s_\varrho-t) - \delta$ with 
$0 \leq t \leq s_\varrho-1$ and 
$0\leq \delta < s_\varrho-t-1$ we have
$$\gr_\varrho^k \simeq H^0(\sh_{K,\bar s_v},\PC(\varrho,s_\varrho-\delta))
(\frac{1-s_\varrho+2t}{2})_{\mathfrak m}.$$
The modulo $l$ monodromy operator $\overline N_\varrho$ induces
an isomorphism
$$\gr_\varrho^{s_\varrho+(s_\varrho-1)+\cdots+(s_\varrho-t)-\delta} \simeq 
\gr_\varrho^{s_\varrho+(s_\varrho-1)+\cdots+(s_\varrho-t-1)-\delta}.$$

We then consider
partitions $(l_1(\varrho) \geq l_2(\varrho),\cdots)$ of $s_\varrho$.
The idea of the proof  is to pass, step by step, from the previous filtration, 
coming from a 
sheaf filtration, to another one
$$(0)=\Fil^{fin}_{-1-s_\varrho} \subset \Fil^{fin}_{-s_\varrho} \subset
\cdots \subset \Fil^{fin}_{-1}=H^0(\sh_{K^v(\oo),\bar s_v},\Psi_\varrho)_{\mathfrak m}$$
where for $r=1,\cdots, s_\varrho$, the graded part $\gr^{fin}_{-r}$ is a lattice of
\begin{equation} \label{eq-fil-srho}
\bigoplus_{\underline s_\varrho \in \Part_{r}(s_\varrho)} 
\bigoplus_{\Pi \in \AC_{K^v,\mathfrak m}(\varrho,
\underline s_\varrho)} (\Pi^\oo)^{K^v} \otimes \sigma_\varrho(\Pi),
\end{equation}
where
\begin{itemize}
\item $\Part_{r}(s_\varrho)$ is the subset of partition
$\underline s_\varrho=(l_1 \geq \cdots \geq l_k)$ of
$s_\varrho$ such that $l_1=r$;

\item for $\underline s_\varrho=(l_1 \geq \cdots \geq l_k)$ 
a partition of $s_\varrho$,
$\AC_{K^v,\mathfrak m}(\varrho,\underline s_\varrho)$ is the set of 
isomorphism classes of irreducible automorphic with $\mathfrak m$-$K^v$ non trivial
invariants and such that 
$$\Pi_v \simeq \st_{l_1} (\pi_{v,1}) \times \cdots \times \st_{l_k}
(\pi_{v,k}) \times \psi$$
where the modulo $l$ reduction of $\pi_{v,1},\cdots,\pi_{v,k}$ are isomorphic to 
$\varrho$ and the supercuspidal support of the modulo $l$ reduction of $\psi$
does not contain $\varrho$.

\item $\sigma_\varrho(\Pi)$ is the $\varrho$-part of $\sigma(\Pi)$ i.e. with the
above notations, 
$$\sigma_\varrho(\Pi) \simeq \Sp_{l_1}(\rho_{v,1}) \oplus
\cdots \oplus \Sp_{l_k}(\rho_{v,k})$$ 
where for $i=1,\cdots, k$, the Galois representation $\rho_{v,i}$ is 
the  contragredient of the representation associated to $\pi_{v,i}$ 
by the local Langlands correspondence.
\end{itemize}
At each step $r$ we prove the $\varrho$-part of
Ihara's lemma for representations 
$\Pi \in \AC_{K^v,\mathfrak m}(\varrho,\underline s_\varrho)$
with $\underline s_\varrho \in \Part_r(s_\varrho)$ in the following sense:
\begin{itemize}
\item for such a irreducible automorphic representation $\Pi$, let
$\Gamma(\Pi)$ be the lattice of $\Pi_v$ induced by the $\OC_L$-cohomology
through
$$(\Pi^{\oo,v})^{K^v} \otimes \Pi_v \otimes \sigma(\Pi) \hookrightarrow
H^0(\sh_{K^v(\oo),\bar s_v},\Psi_\varrho)_{\mathfrak m} \otimes_{\overline \Zm_l}
\overline \Qm_l;$$

\item then $\Gamma(\Pi) \otimes_{\OC_L} \OC_L/\varpi_L \OC_L$, as a
$\Fm_L$-representation of $\GL_d(F_v)$, has a $\varrho$-generic socle, i.e
representations of the form $\st_{l_\varrho(\mathfrak m)} (\varrho) \times \psi$
where $\varrho$ does not belong to the supercuspidal support of $\psi$.
\end{itemize}

\subsubsection{Case where $s_\varrho=3$}
\label{para-s3}

\begin{figure}[t]
\psset{xunit=1cm,yunit=1cm,algebraic=true,dimen=middle,dotstyle=o,dotsize=5pt 0,linewidth=2pt,arrowsize=3pt 2,arrowinset=0.25}
\begin{pspicture*}(-11.5,-4.8)(6.44,6.01)
\pscircle[linewidth=2pt,hatchcolor=black,fillstyle=hlines,hatchangle=45,hatchsep=0.19](-11,0){0.2}
\pscircle[linewidth=2pt,hatchcolor=black,fillstyle=crosshatch,hatchangle=45,hatchsep=0.19](-10,-1){0.2}
\pscircle[linewidth=2pt,hatchcolor=black,fillstyle=crosshatch,hatchangle=45,hatchsep=0.19](-10,1){0.2}
\pscircle[linewidth=2pt](-9,2){0.2}
\pscircle[linewidth=2pt](-9,0){0.2}
\pscircle[linewidth=2pt](-9,-2){0.2}
\psline[linewidth=2pt]{->}(-9,0)(-9,-2)
\psline[linewidth=2pt]{->}(-9,2)(-9,0)
\psline[linewidth=2pt]{->}(-10,1)(-10,-1)
\rput[tl](-8.71,1.57){$\overline N \neq 0$}
\rput[tl](-8.44,-0.59){$\overline N \neq 0$}
\rput[tl](-10.58,0.4){$\overline N \neq 0$}
\pscircle[linewidth=2pt,hatchcolor=black,fillstyle=hlines,hatchangle=45,hatchsep=0.19](-5,0){0.2}
\pscircle[linewidth=2pt,hatchcolor=black,fillstyle=crosshatch,hatchangle=45,hatchsep=0.19](-4,1){0.2}
\pscircle[linewidth=2pt,hatchcolor=black,fillstyle=crosshatch,hatchangle=45,hatchsep=0.19](-4,-1){0.2}
\pscircle[linewidth=2pt](-3,2){0.2}
\pscircle[linewidth=2pt](-3,3){0.2}
\pscircle[linewidth=2pt](-3,4){0.2}
\psline[linewidth=2pt]{->}(-3,3)(-3,2)
\psline[linewidth=2pt]{->}(-3,4)(-3,3)
\rput[tl](-2.43,4.12){$\overline N = 0$}
\rput[tl](-2.39,2.97){$\overline N = 0$}
\psline[linewidth=2pt]{->}(-4,1)(-4,-1)
\rput[tl](-3.66,0.5){$\overline N \neq 0$}
\rput[tl](-5.45,0.537){1}
\rput[tl](-4.46,1.49){1}
\rput[tl](-3.66,-0.53){1}
\rput[tl](-3.52,2.448){1}
\rput[tl](-3.49,3.5){1}
\rput[tl](-3.47,4.53){1}
\pscircle[linewidth=2pt,hatchcolor=black,fillstyle=hlines,hatchangle=45,hatchsep=0.19](0,0){0.2}
\pscircle[linewidth=2pt,hatchcolor=black,fillstyle=crosshatch,hatchangle=45,hatchsep=0.19](1,1){0.2}
\pscircle[linewidth=2pt,hatchcolor=black,fillstyle=crosshatch,hatchangle=45,hatchsep=0.19](1,2){0.2}
\pscircle[linewidth=2pt](2,3){0.2}
\pscircle[linewidth=2pt](2,4){0.2}
\pscircle[linewidth=2pt](2,5){0.2}
\psline[linewidth=2pt]{->}(1,2)(1,1)
\psline[linewidth=2pt]{->}(2,4)(2,3)
\psline[linewidth=2pt]{->}(2,5)(2,4)
\pscircle[linewidth=2pt,hatchcolor=black,fillstyle=hlines,hatchangle=45,hatchsep=0.19](0.23,0.97){0.2}
\rput[tl](-0.37,0.57){2}
\rput[tl](-0.097,1.59){3}
\rput[tl](0.58,0.95){2}
\rput[tl](0.58,2.428){2}
\rput[tl](1.4,3.2){1}
\rput[tl](1.423,4.18){1}
\rput[tl](1.48,5.21){1}
\rput[tl](1.36,2){$\overline N=0$}
\rput[tl](2.39,4.066){$\overline N= 0$}
\rput[tl](2.39,4.98){$\overline N = 0$}
\rput[tl](-7.03,0.284){$\leadsto$}
\rput[tl](-1.79,0.4){$\leadsto$}
\pscircle[linewidth=2pt](0.85,1.47){1.1}
\rput{89.6}(2,3.99){\psellipse[linewidth=2pt](0,0)(1.769,0.87)}
\rput{89.6}(-3,2.9){\psellipse[linewidth=2pt](0,0)(1.769,0.87)}
\rput[tl](0.35,1.6){$\oplus$}
\rput[tl](-2.6,5.82){$\bigoplus_{\Pi_v \simeq \mathrm{St}_3} \Pi^K \otimes \sigma_\varrho(\Pi)=$}
\rput[tl](0.9,0.51){$=\bigoplus_{\Pi_v \simeq \mathrm{St}_{2,1}} \Pi^K \otimes \sigma_\varrho(\Pi)$}
\rput[tl](0,-0.5){$\verteq$}
\rput[tl](-1.44,-0.7){$\bigoplus_{\Pi_v \simeq \mathrm{St}_{1,1,1}} \Pi^K \otimes \sigma_\varrho(\Pi)$}
\pscircle[linewidth=2pt,hatchcolor=black,fillstyle=hlines,hatchangle=45,hatchsep=0.19](-10.89,-3.21){0.2}
\rput[tl](-10.54,-3.2){$\hookrightarrow$}
\pscircle[linewidth=2pt,hatchcolor=black,fillstyle=hlines,hatchangle=45,hatchsep=0.19](-9.44,-3.26){0.2}
\pscircle[linewidth=2pt,hatchcolor=black,fillstyle=hlines,hatchangle=45,hatchsep=0.19](-8.23,-3.31){0.2}
\pscircle[linewidth=2pt,hatchcolor=black,fillstyle=hlines,hatchangle=45,hatchsep=0.19](-10.5,-2.9){0.2}

\pscircle[linewidth=2pt,hatchcolor=black,fillstyle=crosshatch,hatchangle=45,hatchsep=0.19](-4.51,-3.4){0.2}
\pscircle[linewidth=2pt,hatchcolor=black,fillstyle=crosshatch,hatchangle=45,hatchsep=0.19](-3.2,-3.43){0.2}
\pscircle[linewidth=2pt](-1.22,-3.38){0.2}
\pscircle[linewidth=2pt](0,-3.42){0.2}
\rput[tl](-9.76,-2.54){1}
\rput[tl](-11.2,-2.5){2}
\rput[tl](-10.4,-2.3){3}

\rput[tl](-4.7,-2.7){1}
\rput[tl](-0.23,-2.7){1}
\rput[tl](-9.2,-3.2){$\hookrightarrow$}
\rput[tl](-4.21,-3.2){$\hookrightarrow$}
\rput[tl](-0.97,-3.2){$\hookrightarrow$}
\rput[tl](-8.95,-4.35){cokernels are killed by $l$ and their socle are generic}
\end{pspicture*}
\caption{Three filtrations of $H^0(\sh_{K^v(\oo),\bar s_v},\Psi_\varrho)_{\mathfrak m}$ when $s_\varrho=3$}
\label{fig-s3}
\end{figure}

Before considering the general case, we first want to explain the case where
$s_\varrho=3$ and $d=g s_\varrho$: 
we advise the reader to follow the details of the proof with the help
of figure \ref{fig-s3}. We first describe it.
\begin{itemize}
\item It illustrates three filtrations separated by the symbols $\leadsto$; the
graded parts are represented by circles corresponding to the
cohomology of Harris-Taylor perverse sheaves: compare 
the filtration on the left with those in the figure \ref{fig-psi-filtration}.
In particular each circle can be viewed as some lattice of
$$\bigoplus_{\Pi \in \AC_{K^v,\mathfrak m}(\varrho,\underline s_\varrho)} 
(\Pi^\oo)^{K^v} \otimes \sigma'_\varrho(\Pi)$$
for some partition $\underline s_\varrho$ of $s_\varrho=3$ with
the following precisions.
\begin{itemize}
\item When the circle is filled with diagonal lines with
slope $1$, it corresponds to $\underline s_\varrho=(1,1,1)$ or to the contribution 
of $1$ in the partition $\underline s_\varrho=(2,1)$ in the sense that
$\sigma'_\varrho(\Pi)$ is the Galois representation 
$\rho_l(\pi_{v,2})^\vee$  associated to $\pi_{v,2}$
in $\Pi_v \simeq \st_2(\pi_{v,1}) \times \pi_{v,2}$, i.e. the 
contragredient of the Galois representation associated to $\pi_{v,2}$
by the local Langlands correspondence.

\item When the circle is filled with diagonal lines with slope $\pm 1$, it
corresponds to the contribution of $2$ in the partition $\underline s_\varrho=(2,1)$,
in the sense that $\sigma'_\varrho(\Pi)$ is 
the Galois representation associated either to $\pi_{v,1}\{1/2 \}$
or $\pi_{v,1} \{ -1/2 \}$ in $\Pi_v \simeq \st_2(\pi_{v,1}) \times \pi_{v,2}$:
the sign corresponds to the weight of the perverse Harris-Taylor sheaf 
whose cohomology gives the circle.

\item When the circle is empty, it then corresponds to $\underline s_\varrho=(3)$:
then $\sigma'_\varrho(\Pi)$ correspond to the Galois representation associated
to $\pi_v \{ k \}$ in $\Pi_v \simeq \st_3(\pi_v)$ where $k \in \{ -1,0,1 \}$
corresponds to the weight of the perverse Harris-Taylor sheaf whose cohomology
gives the circle.
\end{itemize}
%
%
\item For each of these three filtrations illustrated in figure \ref{fig-s3}, 
subspaces appears from bottom to top.

\item Arrows correspond to the nilpotent monodromy operator and we explicit
if its modulo $l$ reduction $\overline N$ is zero or non zero, see after for more
details.

\item In the second filtration we gather the contribution of 
$\AC_{K^v,\mathfrak m}(\varrho,(3))$ so that in the large ellipse we obtain as a
quotient the contribution of
$\bigoplus_{\Pi \in \AC_{K^v,\mathfrak m}(\varrho,(3))} 
(\Pi^\oo)^{K^v} \otimes \sigma_\varrho(\Pi)$
for all of $\sigma_\varrho(\Pi)$. The indices $1$ above the circles means that
the lattices are modified which is materialize in the bottom of the figure with
the precision that cokernels are killed by $\varpi_L$ with generic cosocle as
$\Fm_L$-representations.

\item In the last filtration on the right of the figure, we gather in the large circle
the contribution of $\AC_{K^v,\mathfrak m}(\varrho,(2,1)$ with 
another modifications of the lattices: note that one have to separate
the contributions of $(2,1)$ with $(1,1,1)$ in the circle filled with lines
with slope $1$.
\end{itemize}

\noindent \textbf{The automorphic filtration}\\
For each of these circles viewed as the $\overline \Zm_l$-cohomology group
of Harris-Taylor perverse sheaves, we will also consider filtrations as follows.
\begin{itemize}
\item For $n=1$ we fix any numbering of elements in
$\AC_{K^v,\bar s_v}(\varrho,\underline s_\varrho)$ which appears in
$\AC_{K^v(1),\bar s_v}(\varrho,\underline s_\varrho)$, i.e. having
non trivial vectors invariants under $K_v(1)$. We then obtain
successive subspaces of the $\overline \Qm_l$-contribution of this circle:
the intersection with the $\overline \Zm_l$ cohomology gives us successive
lattices $\Gamma(\Pi)$
of $(\Pi^\oo)^{K^v} \otimes \sigma'_\varrho(\Pi)$ for theses elements $\Pi \in
\AC_{K^v,\bar s_v}(\varrho,\underline s_\varrho)$ having non zero vectors
invariant under $K_v(1)$.

\item We then take $n=2$,
and consider elements of $\AC_{K^v(2),\bar s_v}(\varrho,\underline s_\varrho)$
which do not appears with $\AC_{K^v(1),\bar s_v}(\varrho,\underline s_\varrho)$:
we fix a numbering of them and obtain other lattices of their contribution.

\item We keep on this construction for any $n$ and speak about 
\emph{the automorphic filtration} of this circle. 
\end{itemize}

\rem Note that the lattice $\Gamma(\Pi)$ might depends on
the ordering of any of the 
$\AC_{K^v(n),\mathfrak m}(\varrho,\underline s_\varrho)$. To deal with
finite number of graded parts, in the following we will argue for some fixed $n$: this also allows
us to consider have everything defined over some finite extension $L$ of $\Qm_l$ as before.

A- We first explain how to pass from the first filtration to the second one. 

\noindent \textit{Step 1: the exchange is non trivial} \\
Consider as illustrated in the figure \ref{fig-s1}, the three first quotients of the first filtration, which correspond to the three circles in the bottom on the left part of
the figure \ref{fig-s3}. We then 
exchange the first two as explained in figure \ref{fig-exchange}.
\begin{figure}[t]
\begin{center}
%

\psset{xunit=1cm,yunit=1cm,algebraic=true,dimen=middle,dotstyle=o,dotsize=5pt 0,linewidth=2pt,arrowsize=3pt 2,arrowinset=0.25}
\begin{pspicture*}(-9.6,-4)(0,3.2)
\pscircle[linewidth=2pt,hatchcolor=black,fillstyle=crosshatch,hatchangle=45,hatchsep=0.2](-8,0){0.2}
\pscircle(-7,1){0.2}
\pscircle(-7,-1){0.2}
\pscircle[linewidth=2pt,hatchcolor=black,fillstyle=crosshatch,hatchangle=45,hatchsep=0.2](-3,0){0.2}
\pscircle(-2,2){0.2}
\pscircle(-2,1){0.2}
\psline[linewidth=2pt]{->}(-6.98,0.8)(-6.98,-0.8)
\psline[linewidth=2pt]{->}(-2,1.8)(-2,1.2)
\rput[tl](-6.66,0.8){$\overline N \neq 0$}
\rput[tl](-1.48,1.94){$\overline N=0$}
\rput[tl](-3.4,0.4){$1$}
\rput[tl](-2.6,1.2){$1$}
\rput[tl](-4.98,0.58){$\leadsto$}
\rput{-43.15}(-7.52,-0.53){\psellipse[linewidth=2pt](0,0)(1.3,0.7)}
\rput{43.24}(-2.54,0.43){\psellipse[linewidth=2pt](0,0)(1.34,0.68)}
\pscircle[linewidth=2pt,hatchcolor=black,fillstyle=crosshatch,hatchangle=45,hatchsep=0.2](-7,-3){0.2}
\pscircle[linewidth=2pt,hatchcolor=black,fillstyle=crosshatch,hatchangle=45,hatchsep=0.2](-6,-3){0.2}
\pscircle(-3,-3){0.2}
\pscircle(-4,-3){0.2}
\rput[tl](-6.8,-3){$\hookrightarrow$}
\rput[tl](-3.8,-3){$\hookleftarrow$}
\rput[tl](-5.76,-2.8){$\twoheadrightarrow T \twoheadleftarrow$}
\rput[tl](-7.34,-2.3){$1$}
\rput[tl](-3.82,-2.3){$1$}
\end{pspicture*}
\caption{First step}
\label{fig-s1}
\end{center}
\end{figure}
We want first to explain why this exchange is non trivial, i.e. the first extension
is non split or equivalently $T$ is non zero. For this we examine more
precisely $\overline N$ on this space denoted by $V$.

a) On the left side of the figure \ref{fig-s1} 
we know that the arrow is a isomorphism: we 
just write $\overline N \neq 0$ in the figure. We can then read 
the dimension of the image of $\overline N$ from what happens over
$\overline \Qm_l$. More precisely,
over $\overline \Qm_l$, the rank of the monodromy operator is equal to
the cardinal of $\AC_{K^v,\mathfrak m}(\varrho,(3))$ meaning for each finite level
$K_v$, $\Pi \in \AC_{K^v,\mathfrak m}(\varrho,(3))$ contributes to 
$g \dim_{\overline \Qm_l} (\Pi^\oo)^{K^vK_v}$.

\rem It is also possible to argue with $K^v(\oo)$, i.e. with infinite dimension at $v$: to be able
to count something with finite dimension, one can look at the contribution of a $K_v$-type for example.

b) We then look at the right side of the figure \ref{fig-s1}. Looking at the last two
graded parts, over $\overline \Fm_l$, as explained in the introduction, 
the modulo $l$ reduction 
of the nilpotent monodromy operator is zero on any $\sigma_\varrho(\Pi)$ so when
considering a lattice of a direct sum of $\sigma_\varrho(\Pi)$, the rank of 
$\overline N$ has to be strictly less than those of $N$ over $\overline \Qm_l$.
In the above picture we simplify this observation by simply writing $\overline N=0$.

So the ellipse of the figure \ref{fig-s1} cannot be split, i.e.  
the exchange is non trivial and  the $T$ appearing in figure \ref{fig-s1} is non zero.
\begin{itemize}
\item As a quotient of the empty circle on the right of the last line, corresponding
to the contribution of $\AC_{K^v,\mathfrak m}(\varrho,(3))$,
we deduce that $T$ has to be extensions of $\overline \Fm_l$-generic
representations of $\GL_d(F_v)$. To see this, consider an automorphic filtration
of the empty circle and view $T$ as a the limit of $T_n$ where $T_n$
corresponds to the exchange of the automorphic representation for $K^v(n)$.
Then every $T_n$ is a extension of $\varrho$-generic representations.

\rem in view of the next steps we just
remember that $T$ has a $\varrho$-generic socle;

\item We then look at $T$ as a quotient of left part of the last line.
Consider an automorphic filtration of the circle filled with diagonal lines
corresponding to automorphic representations in 
$\AC_{K^v,\mathfrak m}(\varrho,(2,1))$. Let denote by $\Gamma(\Pi)$
be the lattice given by the initial circle and $\Gamma^1(\Pi)$ after the exchange,
i.e. for the circle indexed with a $1$. If $\Gamma(\Pi)$ is modified, for the first time
it becomes, cf. lemma \ref{lem-lattice-gen}, 
$\Gamma(\Pi)^{gen}$ the lattice with a $\varrho$-generic socle. Then it is not
possible to modify $\Gamma(\Pi)^{gen}$ to obtain a new lattice
$\Gamma(\Pi)^2$ such that $\Gamma(\Pi)^{gen}/\Gamma(\Pi)^2$ is $\varrho$-generic:
indeed the modulo $l$ reduction of $\Gamma(\Pi)^{gen}$ contains an unique
$\varrho$-generic constituant and it has to be in the image of
$$\Gamma(\Pi)^2 \otimes_{\overline \Zm_l} \overline \Fm_l \longrightarrow
\Gamma(\Pi)^{gen} \otimes_{\overline \Zm_l} \overline \Fm_l.$$
\end{itemize}
Finally after the exchange, the subspace of the new filtration illustrated
by a circle filled with lines and indexed by $1$ in the figure \ref{fig-s1}, 
provided with its automorphic filtration, is such that the lattices are
either the same as the initial one or it is the one with a $\varrho$-generic socle.
%

There is then at least one 
$\Pi_0 \in \AC_{K^v,\mathfrak m}(\varrho,(2,1))$ such that its lattice is 
$\Gamma(\Pi_0)^{gen}$ and is not the one before the exchange. Take
$\Pi_0$ as the first one. Note that modulo $l$ we have the following
commutative diagram
$$\xymatrix{
\st_3(\varrho) \ar@{^{(}->}[r] & \overline X \ar@{->>}[r] \ar@{=}[d]& 
\overline \Gamma(\Pi_0) \\
\overline \Gamma^{gen}(\Pi_0) \ar@{^{(}->}[r] & \overline X \ar@{->>}[r] &
\st_3(\varrho)
}$$
where before the exchange, $\st_3(\varrho)=\ker( \overline X \twoheadrightarrow 
\overline \Gamma(\Pi_0))$ belongs to the image of $\overline N_\varrho$. Then as 
$$\st_3(\varrho)=\ker (\overline \Gamma^{gen}(\Pi_0) 
\longrightarrow \overline \Gamma(\Pi_0))$$
we then deduce that, after the exchange, modulo $l$, 
the subspace $\st_3(\varrho)$ of 
$\overline \Gamma^{gen}(\Pi_0)$ maps to 
$\st_3(\varrho)=\ker( \overline X \twoheadrightarrow 
\overline \Gamma(\Pi_0)$ and then belongs to the image of $\overline N_\varrho$.

\medskip

\noindent \textit{Step 2: the subspace lattice of $\Pi_0$ has also a $\varrho$-generic socle} \\
Maybe $\Pi_0$ does not appears first in the automorphic filtration. Let denote
by $k$ the index of the graded part corresponding to $\Pi_0$ in the automorphic 
filtration of the circle
on the bottom in the right side of the figure \ref{fig-s1}, i.e. the one indexed by $1$
and filled with diagonal lines: we denote by $W$ the cohomology represented
by this circle.

We may moreover suppose that $k$
is minimal among all modified lattices and we want to prove that
$k=1$. We argue by absurdity by assuming $k \geq 2$. Let then 
denote by $\Gamma(\Pi)$ the lattice which appears just
before $\Gamma(\Pi_0)^{gen}$, i.e. the graded part of index $k-1$. We then
exchange these two graded parts so that before the exchange the successive
graded parts are
$$ W=\Gamma_-, \quad \Gamma(\Pi), \quad \Gamma(\Pi_0)^{gen},
\quad \Gamma_+$$
and after there are
$$ W=\Gamma_-, \quad \Gamma(\Pi_0), \quad \Gamma(\Pi)', \quad \Gamma_+$$
and
$$\xymatrix{
\Gamma(\Pi_0) \ar@{^{(}->}[r] & \Gamma(\Pi_0)^{gen} \ar@{->>}[r] & T 
\ar@{=}[d] \\
\Gamma(\Pi) \ar@{^{(}->}[r] & \Gamma(\Pi)' \ar@{->>}[r] & T,
}$$
where $T$ is $l$-torsion, non zero and $\varrho$-generic. As $k$ is minimal, then
$\Gamma_-, \Gamma(\Pi_0)$ is induced by the circle filled with
diagonal lines in the left part of the figure \ref{fig-s1}, i.e. before any exchange,
so that $T$ is non zero as $\Pi_0$ was chosen so that $\Gamma(\Pi_0)^{gen}$
is not the same lattice as before the exchange.
The modulo $l$ reduction of these two filtrations gives a filtration
of the modulo $l$ reduction $\overline W$ of $W$ with successive graded parts
$$\overline W=\overline \Gamma_-,\quad  \overline \Gamma(\Pi_0), \quad 
\overline \Gamma(\Pi)', \quad \overline \Gamma_+ \quad = \quad
\overline \Gamma_-, \quad \overline \Gamma(\Pi), \quad  
\overline \Gamma(\Pi_0)^{gen}, \quad \overline \Gamma_+.$$
We then focus on the $\varrho$-generic constituant of $\overline \Gamma(\Pi_0)$:
\begin{itemize}
\item it is not in the image of $\overline N$,

\item but viewed in the second filtration it is identified with the $\varrho$-generic constituant
of $\overline \Gamma(\Pi_0)^{gen}$ which is in the image of $\overline N$:
contradiction.
\end{itemize}

\rem Note that here $\varrho$-generic is the same as generic.
%

\medskip

\noindent \textit{Step 3: every subspace lattice has a $\varrho$-generic socle} \\
Consider now $\Pi \in \AC_{K^v,\mathfrak m}(\varrho,(2,1))$ and
the two previous lattices of $\Pi_v$ with
\begin{equation} \label{eq-secGamma}
0 \rightarrow \Gamma(\Pi)^{gen} \longrightarrow \Gamma(\Pi) \longrightarrow
\st_3(\varrho) \rightarrow 0.
\end{equation}
We want to distinguish these two lattices through their restriction to 
$K_v=\GL_d(\OC_v)$.
Note that $\st_3(\varrho)_{|K_v}$ contains an unique
$K_v$-type $\overline \sigma_{\max}$ which is moreover maximal. 
From theorem \ref{thm-sigmamax} $\overline \sigma_{\max}$ appears
with multiplicity one in the modulo $l$ reduction of $\Gamma(\Pi)_{|K_v}$.
Write $\Pi_{v} \simeq \st_2(\pi_{v,1}) \times \pi_{v,2}$. With the notations of
\S \ref{sub:WD:types}, to these two cuspidal representations $\pi_{v,1}$
and $\pi_{v,2}$, is associated a SZ-datum and in particular cuspidal representations
$\nu_1$ and $\nu_2$ of a finite linear group.
\\
(a) We first suppose that $\nu_{2}$ and  $\nu_{1}$ are not isomorphic.
Then $\Pi_v$ contains an unique $K_v$-type $\sigma_L$ whose modulo $l$ reduction
contains two $K_v$-types $\overline \sigma_{\max}$ and another one 
$\overline \sigma_{\min}$.
We denote by $\sigma$ (resp. $\sigma^{gen}$) the lattice of
$\sigma_{L}$ obtained from $\Gamma(\Pi)$
(resp. $\Gamma(\Pi)^{gen}$) through $\sigma_{L}
\hookrightarrow (\Pi_{v})_{|K_v}.$
By the multiplicity one property, the short exact sequence (\ref{eq-secGamma})
gives
$$0 \rightarrow \sigma^{gen} \longrightarrow \sigma \longrightarrow
T \rightarrow 0,$$
where $T$ is non zero as it contains at least $\overline \sigma_{\max}$ as a sub-quotient.
Consider any $\Pi \in \AC_{K^v,\mathfrak m}(\varrho,(2,1))$, we know that its subspace
lattice induced by the cohomology is either $\Gamma(\Pi)$ or 
$\Gamma(\Pi)^{gen}$. Proposition \ref{prop-BLC} tells us that it has to be $\Gamma(\Pi)^{gen}$.
%

\smallskip

\noindent
(b) Consider now the case where the two cuspidal representations
$\nu_1$ and $\nu_2$ are isomorphic. We consider another cuspidal
representation $\nu'_2$ isomorphic to $\nu_2$ modulo $\varpi_L$, which is
not isomorphic to $\nu_1$ and we denote by
$\pi'_{v,2}$ the corresponding cuspidal representation constructed as in
\S \ref{sub:WD:types} using the same data for $\pi_{v,2}$ and replacing
$\nu_2$ by $\nu'_2$.

By \cite{gee-annalen} theorem 5.1.5), cf. theorem \ref{thm-raising}, there exists
another system of Hecke eigenvalues $\lambda'$ so that
$\lambda' \equiv \lambda \mod \varpi_L$ and the associated local
component $\Pi'_v$ is isomorphic to $\st_2(\pi_{v,1}) \times \pi'_{v,2}$:
note that $\lambda'$ now verifies the
property as in the previous case (a) so that $\Pi'_v$
has only one $K_v$-type $\sigma'_L$.
This type is constructed as explained in \S \ref{sub:WD:types} 
so that we can copy the
construction for $\Pi_v$ to obtain a representation $\sigma_L$ which is no more
a $K_v$-type. 

We then consider the induced lattices $\Gamma_{ind}$ (resp. $\Gamma'_{ind}$)
of $\Pi_v$ (resp. $\Pi'_v$) and the associated lattices $\sigma_{ind}$ 
(resp. $\sigma'_{ind}$) of $\sigma_L$ (resp. $\sigma'_L$). Note that 
modulo $\varpi_L$, we have 
$$\overline \Gamma_{ind} \simeq \overline \Gamma'_{ind} \qquad \hbox{and}
\qquad \overline \sigma_{ind} \simeq \overline \sigma'_{ind}.$$ 
Let then denote by $\Gamma_{gen}$ and $\Gamma'_{gen}$ the lattices
such the socle of 
$$\Gamma_{gen}/\varpi_L \Gamma_{gen} \simeq \Gamma'_{gen}/\varpi_L \Gamma'_{gen}$$ 
is generic. We then also introduce the pullbacks
$$\xymatrix{
\Gamma_{gen} \ar@{^{(}->}[r] & \Gamma_{ind} & \Gamma'_{gen} \ar@{^{(}->}[r] &
\Gamma'_{ind} \\
\sigma_{gen} \ar@{^{(}-->}[r] \ar@{^{(}-->}[u] & \sigma_{ind} \ar@{^{(}->}[u] &
\sigma'_{gen} \ar@{^{(}-->}[r] \ar@{^{(}-->}[u] & \sigma'_{ind} \ar@{^{(}->}[u]
}$$
Note that we have $\sigma_{gen}/\varpi_L \sigma_{gen} \simeq 
\sigma'_{gen}/\varpi_L \sigma'_{gen}$. Indeed let denote by 
$\widetilde \sigma_{gen}$ the subspace of $\sigma_{ind}$ such that
$$\sigma_{ind}/\widetilde \sigma_{gen} \simeq \sigma'_{ind}/\sigma'_{gen}.$$
Note that $\overline \sigma_{\max}$ is a subquotient of 
$\Gamma'_{ind}/\Gamma'_{gen}$ and as it is contains once in 
$\Gamma_{ind}/\varpi_L \Gamma_{ind}$, it has to be a subquotient of 
$\sigma'_{ind}/\sigma'_{gen}$. Any $\Gamma_{gen} \hookrightarrow \Gamma_+ \hookrightarrow \Gamma_{ind}$ is obtained by pullback
$$\xymatrix{
\Gamma_{gen} \ar@{=}[d] \ar@{^{(}->}[r] & \Gamma_{ind} \ar@{->>}[r] & T=(\overline \sigma_{\max} // \cdots) \\
\Gamma_{ge} \ar@{^{(}->}[r] & \Gamma_+ \ar@{^{(}-->}[u] \ar@{-->}[r] & T' \ar@{^{(}->}[u]
}$$
but as $\overline \sigma_{\max}$ belongs to the socle of $T$, the cokernel $\Gamma_{ind}/\Gamma_+$
does not contain $\overline \sigma_{\max}$ and so
$\widetilde \sigma_{gen}$ can not be obtained as a pullback using
$\Gamma_+ \supsetneq \Gamma_{gen}$. We then deduce that
$\widetilde \sigma_{gen} \hookrightarrow \sigma_{gen}$. Consider then
$$\sigma'_{gen} \hookrightarrow \widetilde \sigma'_{ind} \hookrightarrow 
\sigma'_{ind}$$
such that $\sigma'_{ind}/\widetilde \sigma'_{ind} \simeq 
\sigma_{gen}/\widetilde \sigma_{gen}$. As
$\overline \sigma_{\max}$ is a subquotient of these cokernel then, arguing as before we obtain
$\widetilde \sigma'_{ind} \hookrightarrow \Gamma'_{gen}$ and so
$\widetilde \sigma_{gen}= \sigma_{gen}$.

Case (a) gives us that the natural morphism 
$$M(\sigma^v\sigma'_{gen})/(\lambda',\varpi_L) \longrightarrow
M(\sigma^v\sigma'_{ind})/(\lambda',\varpi_L)$$
induced by $\iota$ is zero so that it is the same for
\begin{equation} \label{eq-casb}
M(\sigma^v\sigma_{gen})/(\lambda,\varpi_L) \longrightarrow
M(\sigma^v\sigma_{ind})/(\lambda,\varpi_L).
\end{equation}
But if the lattice induced by $\lambda$ were $\Gamma_{ind}$, then
$$\hom_{K_v}(\sigma_{ind}/\varpi_L,\sigma_{ind}/\varpi_L) \longrightarrow
\hom_{K_v}(\sigma_{gen}/\varpi_L,\sigma_{ind}/\varpi_L)$$
is non zero as the image of identity is $\overline \iota \neq 0$ and so
(\ref{eq-casb}) is non zero.

%

\rem In case (b) if we do not modify one of the cuspidal, the problem is that the $K_v$-type
whose modulo $l$ reduction contains $\overline \sigma_{\max}$, remains irreducible
modulo $\varpi_L$ and so have, up to homothety, only one stable lattice. We are then not able
to detect the right $\GL_{3s_\varrho}(F_v)$-lattice. In the general case we
might also have to modify one of the cuspidal representation in the cuspidal support
to avoid such a situation.

\rem The case $s_\varrho=3$ is more easy than the general one because we
are concerned with only one partition $(2,1)$ so that when we consider any
other automorphic contribution $\Pi$ it shares the same partition $(2,1)$
with $\Pi_0$ and the modulo $l$ reduction of the types of $\Pi$ and $\Pi_0$
coincide. In the general the case we will need to manage the existence
of various partitions contributing to the considered circle.

\medskip

\noindent \textit{Step 4: Ihara's lemma for $\AC_{K^v,\mathfrak m}(\varrho,(2,1))$} \\
By now concerning the new filtration, those
illustrated on the right side of the figure \ref{fig-s1}, we denote again by $W$ 
the contribution of the
circle filled with diagonal lines and indexed by $1$:
$$0 \rightarrow W \longrightarrow V \longrightarrow W' \rightarrow 0.$$
We focus on a automorphic filtration of $W$ where the 
graded parts are lattices of the contribution of some 
$\Pi \in \AC_{K^v,\mathfrak m}(\varrho,(2,1)$: we have seen that
the first graded part, i.e. the subspace one, is such that its modulo $l$ reduction is
a non trivial extension of the irreducible non $\varrho$-generic constituant by the 
$\varrho$-generic one.

Consider any graded part $\Gamma(\Pi)$ and suppose that its modulo $l$
reduction has a non $\varrho$-generic subspace: we state that it is not a subspace of $W$.
Indeed we have seen that subspace lattice of the contribution of
$\Pi$ is $\Gamma(\Pi)^{gen}$ and
$$0 \rightarrow \Gamma(\Pi)^{gen} \longrightarrow \Gamma(\Pi) \longrightarrow T
\rightarrow 0,$$
where $T$ is $l$-torsion and non zero as by hypothesis $\Gamma(\Pi)$ is
supposed to be non isomorphic to $\Gamma(\Pi)^{gen}$. 
Tensoring with $\overline \Fm_l$, we then obtain that the non $\varrho$-generic
constituant of $\Gamma(\Pi) \otimes_{\overline \Zm_l} \overline \Fm_l$, which is
supposed to be a subspace of $W$ and which belongs to the
image of $\Gamma(\Pi)^{gen} \otimes_{\overline \Zm_l} \overline \Fm_l$.
But we know that this non $\varrho$-generic constituant is not a subspace of 
$\Gamma(\Pi)^{gen} \otimes_{\overline \Zm_l} \overline \Fm_l$: contradiction.

\medskip

\noindent \textit{Step 5: final exchanges to arrive at the second filtration} \\ 
We now exchange $W'$ with the two circles of the
first filtration of the figure \ref{fig-s3}: those filled with diagonal lines with
slope $1$ and those with slopes $\pm 1$. We then arrive at the second filtration
of this figure. To be in position to pass from the second filtration to the third one,
we need to give more informations on the arrow $\overline N \neq 0$
of the second filtration of the figure \ref{fig-s3}. 

Let consider the subspace represented by the first three graded parts, i.e.
we remove the quotient given by the contribution of
$\AC_{K^v,\mathfrak m}(\varrho,(3))$. Before all the exchanges,
the image of the considered $\overline N$ was equal to the 
contribution of the circle pointed by this arrow. In particular all the $(\varrho,2)$-small subquotients
were in the image of $\overline N$. But as the exchanges made are only related to
$\st_3(\varrho)$, we then deduce that after all the exchanges, all the $(\varrho,2)$-small 
subquotients of the circle pointed by the arrow are still in the image of $\overline N$.

\medskip

B- We now explain how to pass from the second filtration to the third one.

The situation is very similar, we just focus on the differences. 
Let denote again by $V$ the quotient gathering the
three first graded parts of this second filtration. Note that
\begin{itemize}
\item  the modulo $l$ reduction of the first graded part, i.e. the circle filled with
lines with slopes $\pm 1$, has a $\varrho$-generic socle; 

\item the rank of $\overline N$ can be directly compared to its $\overline \Qm_l$-version as explained above.

\end{itemize}

\noindent \textit{Step 1: the exchange is non trivial} \\
As step 1 of A, the exchange between the first two graded
parts is necessarly non trivial because after the exchange the
$\overline \Fm_l$-contribution of $\overline N$, concerning $(\varrho,2)$-small subquotients,
in the first two graded parts
is strictly less than its $\overline \Qm_l$-contribution.

\rem The easiest way maybe to count things, is to look at the contributions of $K_v$-type for
the partition $(2,1)$: note that $\st_3(\pi_v)$ never contributes to these contributions.

Concerning the torsion module $T$ produced by the various exchanges,
\begin{itemize}
\item[(i)] on one side as a quotient of the contribution of 
$\AC_{K^v,\mathfrak m}(\varrho,(2,1))$, it follows from A step 3, one of them has a 
$\varrho$-generic socle but compared to A we don't know whether it is always
a non trivial extension of $\st_3(\varrho)$ by $LT_\varrho(1,1)$ or it is
$\st_3(\varrho)$ alone with $LT_\varrho(1,1)$ appearing alone after;

\item[(ii)] on the other side, note that modulo $\varpi_L$, the representation 
$\pi_{v,1} \times \pi_{v,2} \times \pi_{v,3}$ has a subquotient which is not a 
constituant of any $\st_2(\pi_{v,1}) \times \pi_{v,2}$, where all the $\pi_{v,i}$ 
are supposed to be isomorphic to $\varrho$ modulo $\varpi_L$. As before we then deduce
that $T$ is killed by $\varpi_L$.
\end{itemize}
We then deduce that after the exchange, concerning the circle filled with
line of slope 1 and indexed with a $2$, its automorphic filtration separating the contributions
of $\Pi \in \AC_{K^v,\mathfrak m}(\varrho,(1,1,1))$, has a graded part
$\Gamma(\Pi_0)$ such that its modulo $\varpi_L$ reduction, as a 
$\Fm_L$-representation of $\GL_d(F_v)$, has a $\varrho$-generic socle.

\medskip

\noindent \textit{Step 2-3-4: lattices for elements of 
$\AC_{K^v,\mathfrak m}(\varrho,(1,1,1))$}  \\
The arguments of A-step 2 applies and gives us that the lattice associated to 
$\Pi_0$ viewed as a subspace of $V$ has also a $\varrho$-generic socle, cf. lemma \ref{lem-lattice-gen}.
Repeating the arguments of A-step 3, thanks to proposition \ref{prop-BLC}, 
we then deduce that whatever is 
$\Pi \in \AC_{K^v,\mathfrak m}(\varrho,(1,1,1))$, its lattice induced by $V$
as a subspace, has also a $\varrho$-generic socle. In particular in (i) above, we deduce
that all $l$-torsion modules appearing in the exchanges are necessary
a non trivial extension of $LT_\varrho(1,1)$ by $\st_3(\varrho)$ and the 
subspace lattice of any $\Pi \in \AC_{K^v,\mathfrak m}(\varrho,(1,1,1))$,
has a socle filtration with at least three graded parts where the
two first ones are $\st_3(\varrho)$ and $LT_\varrho(1,1)$.

Finally the previous arguments of A-step 4 show that no non $\varrho$-generic subquotient
of the contributions of elements of $\AC_{K^v,\mathfrak m}(\varrho,(1,1,1))$
can give an irreducible subspace of the modulo $l$ cohomology.

We then arrive at our final filtration where we succeeded to gather the contributions
of the various elements of $\AC_{K^v,\mathfrak m}(\varrho,\underline s_\varrho)$
such that no non $\varrho$-generic irreducible constituant of the various graded parts
could be a subspace of the modulo $l$ cohomology: Ihara's lemma is then
proved in this case.

\subsubsection{The general case}
\label{para-generalcase}

We now consider the general case and we follow closely the arguments explained
when $s_\varrho=3$. We first have to define the different filtrations and then
explain how to pass from one to its following.

For some fixed $r$ between $1$ and $s_\varrho$ we suppose that we have already proved
the following property:
\begin{itemize}
\item consider an automorphic representation $\Pi$ 
which contributes to $H^0(\sh_{K,\bar s_v},\Psi_\varrho)_{\mathfrak m} 
\otimes_{\overline \Zm_l} \overline \Qm_l$ with
$$\Pi_v \simeq \st_{l_1(\varrho)} \pi_{v,1} \times \cdots \st_{l_k(\varrho)} \pi_{v,k}
\times \psi$$ 
where $(l_1(\varrho) \geq l_2(\varrho),\cdots,l_k(\varrho))$ is a partition 
of $s_\varrho$ with $r_l(\pi_{v,i}) \simeq \varrho$
for $i=1,\cdots,k$ and the supercuspidal support of the modulo $l$ reduction
of $\psi$ does not contain $\varrho$;

\item suppose that $l_1(\varrho) \geq r$, then the $\GL_d(F_v)$-lattice induced by 
$$\Pi_v \otimes \rho_l(\pi_v)^\vee \hookrightarrow H^0(\sh_{K^v(\oo),\bar s_v},\Psi_{\varrho})_{\mathfrak m}$$
is such that the only irreducible subspace of its modulo $l$ reduction is
$\st_{s_\varrho}(\varrho) \times \psi$ where $\psi$ is some irreducible 
$\overline \Fm_l$-representation whose supercuspidal support does not contain
$\varrho$. 
\end{itemize}

For the same fixed $r$ between $1$ and $s_\varrho$ we consider the following filtration:
$$(0)=\Fil_\varrho^{-r(r+1)/2}(r) \subset \cdots \subset \Fil_\varrho^{0}(r) 
\subset \Fil_\varrho^1(r) =
H^0(\sh_{K^v(\oo),\bar s_v},\Psi_{\varrho})_{\mathfrak m}$$ 
such that 
\begin{itemize}
\item $\gr_\varrho^1(r) \otimes_{\overline \Zm_l} \overline \Qm_l$ is the direct sum
$$\bigoplus_{\underline s_\varrho > (r)}
\bigoplus_{\Pi \in \AC_{K^v,\mathfrak m}(\varrho,\underline s_\varrho)} 
(\Pi^\oo)^{K^v(\oo)} \otimes \sigma_\varrho(\Pi),$$
where $(l_1 \geq l_2 \geq \cdots) > (r)$ means $l_1 > r$;

\item for $k=1+2+\cdots+(t+1)-\delta$ with $0 \leq t \leq r-1$ and 
$0 \leq \delta \leq t$, the graded part $\gr^{-r(r+1)/2+k}_\varrho(r)$
is a lattice $\Gamma(t,\delta)^{init}$ 
of\footnote{The contribution of the automorphic representations
$\Pi \in \AC_{K^v,\mathfrak m}(\varrho,\underline s_\varrho)$
for $\underline s_\varrho=(l_1 \geq \cdots)$ with $l_1 >r$ was already put
in $\gr^0_\varrho(r)$ while the others which contributes to the cohomology of
$\PC(t+1+\delta)(\frac{t-\delta}{2})$ remains in this graded parts.}
\begin{multline*}
\bigoplus_{\underline s_\varrho=(r \geq \cdots)}
\bigoplus_{\Pi \in \AC_{K^v,\mathfrak m}(\varrho,\underline s_\varrho)} 
(\Pi^\oo)^{K^v(\oo)} \otimes \rho_l(\Pi_v)(\frac{r-1-2t+\delta}{2}) \\
\hookrightarrow
\bigoplus_{\pi_v/ r_l(\pi_v) \simeq \varrho}
H^0(\sh_{K^v(\oo),\bar s_v},\PC(r-\delta)(\frac{r-1-2t+\delta}{2}))_{\mathfrak m} 
\otimes_{\overline \Zm_l} \overline \Qm_l.
\end{multline*}
Moreover with the same notations for $0 \leq \delta \in \{ t-1,t \}$,
if we denote by $\Gamma(t,\delta)^0$ the lattice induced by 
$$\bigoplus_{\pi_v/ r_l(\pi_v) \simeq \varrho} H^0(\sh_{K^v(\oo),\bar s_v},
\PC(t+1+\delta,\pi_v)(\frac{t-\delta}{2}))_{\mathfrak m}$$ 
on the previous subspace, then we have 
$$\Gamma(t,\delta)^{init} \hookrightarrow \Gamma(t,\delta)^{0}$$
where the cokernel is $l$-torsion.

\rem In the case $s_\varrho=3$, 
\begin{itemize}
\item in the second filtration of the figure \ref{fig-s3},
$\gr^1_\varrho(2)$ is the first ellipse gathering the contributions of
$\AC_{K^v,\mathfrak m}(\varrho,(3))$. The image of circle pointed by 
(resp. at the origin of) the vector corresponds to $(t,\delta)=(0,0)$
(resp. $(1,0)$) and the last one is indexed by $(1,1)$.

\item In the third filtration of this figure, $\gr^1_\varrho(1)$ gathers both the contribution of
$\AC_{K^v,\mathfrak m}(\varrho,(3))$ and those of 
$\AC_{K^v,\mathfrak m}(\varrho,(2,1))$ in the large circle. There is then only
remaining graded part for $(t,\delta)=(0,0)$ which corresponds to the
contribution of $\AC_{K^v,\mathfrak m}(\varrho,(1,1,1))$ which corresponds
to the remaining part of the cohomology of 
$\bigoplus_{\pi_v/ r_l(\pi_v) \simeq \varrho} \PC(1,\pi_v)(0)$ when the contribution of
$\AC_{K^v,\mathfrak m}(\varrho,(2,1))$ has been removed.
\end{itemize}

\item For every $0 \leq t \leq r-1$,
the modulo $l$ monodromy operator $\overline N_\varrho$ induces
$$\gr_\varrho^{-\frac{r(r+1)}{2}+(1+2+\cdots+(t+1)-(t-1))}(r) \longrightarrow
\gr_\varrho^{-\frac{r(r+1)}{2}+(1+2+\cdots+t-(t-1))}(r),$$
which is surjective relatively to all the $(\varrho,r)$-small subquotients.

\item if $\Gamma_{K^v,\mathfrak m}(r,t,\delta)$ is the \textit{initial} lattice of 
$\gr^{-\frac{r(r+1)}{2}+(1+2\cdots+(t+1)-\delta)}_\varrho(r)$ induced by
$H^0(\sh_{K^v(\oo),\bar s_v},\PC(r-\delta)
(-\frac{r-1-2t+\delta}{2}))_{\mathfrak m}$
then 
$$\gr_\varrho^{-\frac{r(r+1)}{2}+(1+2+\cdots+(t+1)-\delta)}(r)\hookrightarrow
\Gamma_{K^v,\mathfrak m}(r,t,\delta)$$
the cokernel being $l$-torsion and the kernel of the modulo $l$
reduction of this injection has a $\varrho$-generic socle.
\end{itemize}

\subsubsection{Proof of the induction}

Note first that the case $r=s_\varrho$ is clearly satisfied. We now suppose that
the result is true for $r$ and we prove it is then true for $r-1 \geq 1$. 
We follow closely the arguments of the case $s_\varrho=3$.

\medskip 

\noindent \textit{Step 1: the exchange is non trivial} \\
Consider the three first graded parts 
$V:=\Fil^{-r(r+1)/2+2}_\varrho(r)$ where the nilpotent monodromy operator induces 
$\gr^{-r(r+1)/2+2}_\varrho(r) \longrightarrow \gr^{-r(r+1)/2}_\varrho(r)$
which remains an isomorphism modulo $l$. 
We then exchange the two first graded parts and then denote the new graded parts as
$$\widetilde{\gr}^{-r(r+1)/2+1}_\varrho(r,\underline s_\varrho(r,\max)),
\qquad \widetilde{\gr}^{-r(r+1)/2}_\varrho(r,\underline s_\varrho(r,\max)), \qquad
\gr^{-r(r+1)/2+2}_\varrho(r)$$
with
$$0 \rightarrow \widetilde{\gr}^{-r(r+1)/2+1}_\varrho(r) \longrightarrow
\gr^{-r(r+1)/2+1}_\varrho(r) \longrightarrow T \rightarrow 0,$$
and
$$0 \rightarrow \gr^{-r(r+1)/2}_\varrho(r) \longrightarrow 
\widetilde{\gr}^{-r(r+1)/2}_\varrho(r) \longrightarrow T \rightarrow 0,$$
where $T$ is torsion. The situation is exactly similar as step 1 for the case 
$s_\varrho=3$, so that by
considering the modulo $l$ reduction of the nilpotent monodromy operator on $V$,
we deduce that the exchange is non trivial, i.e. $T \neq 0$.
Considering the modulo $l$ reduction of the second short exact sequence,
the induction hypothesis tells us that the socle of $T[\varpi_L]$
is $\varrho$-generic.

Consider an automorphic filtration of $\gr^{-r(r+1)/2+1}_\varrho(r)$ whose
graded parts $\grr_\varrho^k(r)$ correspond to the contribution of some $\Pi \in
\AC_{K^v,\mathfrak m}(\varrho,\underline s_\varrho)$ for some
partition $\underline s_\varrho=(r-1 \geq \cdots)$ of $s_\varrho$
starting with $r-1$. We infer a similar filtration of
$\widetilde \gr^{-r(r+1)/2+1}_\varrho(r)$ with graded parts
$$0 \rightarrow \widetilde \grr^k_\varrho(r) \longrightarrow \grr^k_\varrho(r) \longrightarrow T^k_\varrho(r) \rightarrow 0.$$
As $T$ is also a quotient of 
$\widetilde \gr^{r(r+1)/2-1}_\varrho(r)$, we then deduce that there exists
$k$ such that $T^k_\varrho(r)[\varpi_L]$ has a non zero socle which is 
$\varrho$-generic.

\medskip

\noindent \textit{Step 2: the subspace lattice of $\Pi_0$ has also a 
$\varrho$-generic socle} \\
The same arguments of the case $s_\varrho=3$ apply without any change using 
lemma \ref{lem-lattice-gen}.

\medskip

\noindent \textit{Step 3: every subspace lattice attached to any 
$\Pi \in \AC_{K^v,\mathfrak m}(\varrho,\underline s_\varrho)$ for any
partition $\underline s_\varrho=(r-1 \geq \cdots)$, has also a $\varrho$-generic
socle} \\
The main ingredient is the same as in the case $s_\varrho=3$, that is
proposition \ref{prop-BLC}, but now we have to struggle 
with the fact that usually there are many partitions 
$\underline s_\varrho=(r-1 \geq \cdots)$ starting with $r-1$: the problem is that
proposition \ref{prop-BLC} needs the $K_v$-type to have multiplicity one
in $(\Pi_v)_{|K_v}$.

\smallskip

\textit{(i) From some $\underline s_\varrho^0$ to $(r-1,s_\varrho-r+1)$} \\
By now we proved the existence of some 
$\Pi_0 \in \AC_{K^v,\mathfrak m}(\varrho,\underline s_\varrho^0)$ for which its
subspace lattice $\Gamma_0^{gen}$ induced by the cohomology is such that
its modulo $l$ reduction has a $\varrho$-generic socle. The local
component of $\Pi_0$ at $v$ looks like
$$\Pi_{0,v} \simeq \st_{r-1}(\pi_{v,0}) \times \cdots \cdots \st_{l_k}(\pi_{v,k})
\times \psi$$
where 
\begin{itemize}
\item $\underline s_\varrho^0=(r-1 \geq l_1 \geq \cdots \geq l_k)$, 

\item the modulo $l$ reduction of $\pi_{v,i}$ is isomorphic to $\varrho$ for
$i=0,\cdots,k$

\item and $\varrho$ does not belong to the supercuspidal support of the modulo
$l$ reduction of $\psi$.
\end{itemize} 
Consider now 
$$\Pi_{v,+} \simeq \st_{r-1}(\pi_{v,0}) \times \st_{l_1} (\pi_{v,0} \{ \delta_1/2 \})
\times \cdots \st_{l_k}(\pi_{v,0} \{ \delta_k/2 \}) \times \psi$$
where $\delta_1=s_\varrho-r$, $\delta_2=\delta_1-2l_1$... and
$\delta_k=\delta_1-2l_1-\cdots - 2l_{k-1}=r-s_\varrho$. 

\rem The $\delta_i$ are chosen so that 
\begin{equation} \label{eq-st-quotient}
\st_{s_\varrho-r+1}(\pi_{v,0}) \hookrightarrow
\st_{l_1} (\pi_{v,0} \{ \delta_1/2 \}) \times \cdots \times
\st_{l_k}(\pi_{v,k} \{ \delta_k/2 \}).
\end{equation}

\begin{lem} 
There is exist a bijection between the stable lattices $\Gamma$ of
$$\st_{r-1}(\pi_{v,0}) \times \st_{l_1} (\pi_{v,1}) \times \cdots \times 
\st_{l_k}(\pi_{v,k})$$ 
and those $\Gamma'$ of
$$\st_{r-1}(\pi_{v,0}) \times \st_{l_1} (\pi_{v,0} \{ \delta_1/2 \})
\times \cdots \times \st_{l_k}(\pi_{v,0} \{ \delta_k/2 \})$$
such that $\overline \Gamma:=\Gamma/\varpi_L \Gamma \simeq
\Gamma'/\varpi_L \Gamma'=:\overline \Gamma'$.
\end{lem}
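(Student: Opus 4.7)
The plan is to exploit the following congruences. Since $q_v \equiv 1 \pmod{l}$, the unramified character $|\cdot|^{\delta_i/2}$ reduces trivially modulo $\varpi_L$; combined with Proposition~\ref{prop-red-modl}, this gives that $\pi_{v,i}$ and $\pi_{v,0}\{\delta_i/2\}$ share a common modulo~$l$ reduction, namely $\varrho$. Each $\st_{l_i}(\pi_{v,i})$ and $\st_{l_i}(\pi_{v,0}\{\delta_i/2\})$ is irreducible over $\overline{\Qm}_l$, so by Schur's lemma admits a stable $\OC_L$-lattice unique up to homothety; we normalize these choices so that in both cases the reduction modulo~$\varpi_L$ is the common $\overline\Fm_l$-representation $\st_{l_i}(\varrho)$. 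Parabolically inducing the tensor product of these chosen lattices from the standard Levi then produces reference lattices $\Gamma_0 \subset V$ and $\Gamma_0' \subset V'$ together with a canonical identification $\overline{\Gamma}_0 \simeq \overline{\Gamma}_0'$ of $\overline\Fm_l[G]$-modules, since compact (and hence parabolic) induction commutes with reduction modulo $\varpi_L$.

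The bijection in the lemma would then be constructed by matching any stable lattice $\Gamma \subset V$ with a stable lattice $\Gamma' \subset V'$ having the same image under this identification. To justify that this is indeed a bijection, I would use the Bushnell--Kutzko/SZ-datum description of Section~\ref{sub:WD:types}. Since $\pi_{v,i}$ and $\pi_{v,0}\{\delta_i/2\}$ lie in the same inertial equivalence class and have congruent modulo~$\varpi_L$ reductions, their associated ps-characters are endo-equivalent; consequently the SZ-data $\SF_\PC$ and $\SF_{\PC'}$ of $V$ and $V'$ share the same $(E_i, \beta_i, \BF_i, \kappa_i)$-parts, and differ only in the cuspidal representations $\nu_i$ and $\nu_i'$ of the residual finite linear groups, which are congruent modulo $\varpi_L$. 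The covers $(J, \lambda)$ and $(J, \lambda')$ then induce compact inductions $\sigma(\SF_\PC)$ and $\sigma(\SF_{\PC'})$ with isomorphic reductions modulo~$\varpi_L$, and any stable lattice of $V$ (resp.\ $V'$) is determined by the intersection with (and extension from) lattices in this common type structure.

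The main obstacle is the final step: promoting the matching via common modulo~$\varpi_L$ reduction to an honest bijection rather than merely an equinumerous pairing. Concretely, one must check that for each $\overline\Fm_l[G]$-module $M$ arising as $\overline{\Gamma}$ for some lattice $\Gamma\subset V$, the set of such lattices is parametrized by data that depend only on the $\OC_L$-structure of the chosen SZ-type (together with the ambient admissible $L[G]$-module), and that this parametrization matches across $V$ and $V'$. Since the SZ-types for $V$ and $V'$ have been set up with matching modulo~$\varpi_L$ reductions, and since the relevant $L[G]$-modules are equivalent to Hecke modules over a common Hecke algebra (up to the congruence $\nu_i \equiv \nu_i' \pmod{\varpi_L}$), this bookkeeping can be carried through; the bijection $\Gamma \leftrightarrow \Gamma'$ then arises canonically from the identification on the SZ-type side.
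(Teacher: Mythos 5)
Your first paragraph reproduces the paper's starting point: take the induced lattices $\Gamma_{ind}\subset \st_{r-1}(\pi_{v,0})\times\st_{l_1}(\pi_{v,1})\times\cdots\times\st_{l_k}(\pi_{v,k})$ and $\Gamma'_{ind}$ in the twisted product, and observe that their reductions modulo $\varpi_L$ are isomorphic. But the property the paper actually uses is stronger and you do not exploit it: in the limit case these reductions are subquotients of $\varrho\times\cdots\times\varrho$ and hence \emph{semisimple}, and it is this semisimplicity that makes the bijection constructible. Concretely, the paper takes a stable lattice $\Gamma\subseteq\Gamma_{ind}$ with $\tau:=\Gamma_{ind}/\Gamma$ killed by $\varpi_L$, uses the semisimplicity of $\overline{\Gamma'_{ind}}\simeq\overline{\Gamma_{ind}}$ to split off $\tau$ as a direct summand $\tau\oplus\tau'$, defines $\Gamma'$ as the kernel of the resulting surjection $\Gamma'_{ind}\twoheadrightarrow\tau$ (a pullback of the preimage of $\tau'$), and then iterates the construction with the new pair $(\Gamma,\Gamma')$ to reach every stable lattice up to homothety. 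Your proposal contains no substitute for this mechanism: "matching any stable lattice $\Gamma\subset V$ with a stable lattice $\Gamma'\subset V'$ having the same image under this identification" is only meaningful for lattices squeezed between $\varpi_L\Gamma_0$ and $\Gamma_0$ (where a lattice is determined by its image in $\overline{\Gamma_0}$); for lattices whose quotient from $\Gamma_0$ is killed only by a higher power of $\varpi_L$ there is no such image, and you yourself flag exactly this as the unresolved "main obstacle."

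The route you then sketch to close the gap does not work. Passing to SZ-data and $K_v$-types cannot parametrize the $\GL_d(F_v)$-stable lattices of these induced representations: restriction to a type retains only partial information about a $G$-lattice, and in fact the surrounding sections of the paper are devoted precisely to the delicate converse problem of recovering $\GL_d(F_v)$-lattice information from type-level lattices (this is what proposition \ref{prop-BLC} and the pullback/pushout manipulations of $\sigma_{st}$, $\sigma'$, $\sigma$ are for). Moreover the endo-equivalence discussion is beside the point here: the cuspidals $\pi_{v,i}$ and $\pi_{v,0}\{\delta_i/2\}$ need not give literally "the same" type data, and even if they did, an identification of types would not yield a bijection of $G$-lattices, nor would it prove the required compatibility $\overline{\Gamma}\simeq\overline{\Gamma}'$ for the matched pair. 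So the statement remains unproved as written; what is missing is the elementary pullback construction over the semisimple common reduction, together with the iteration that propagates it to all lattices.
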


\begin{proof}
Let denote by $\Gamma_{ind}$ (resp. $\Gamma'_{ind}$) 
the induced stable lattice of 
$\st_{r-1}(\pi_{v,0}) \times \cdots \times \st_{l_k}(\pi_{v,k})$
(resp. of $\st_{r-1}(\pi_{v,0}) \times \cdots \times \st_{l_k}(\pi_{v,0} \{ \delta_k/2)$) 
and note that their modulo $l$
reduction is semi-simple and so isomorphic.
Consider then a stable lattice $\Gamma$ contained in $\Gamma_{ind}$
such that $\tau:=\Gamma_{ind}/\Gamma$ is $l$-torsion and let $\Gamma'$
be defined by the following pullback:
$$\xymatrix{
& \tau \ar@{=}[r] & \tau \\
l \Gamma'_{ind} \ar@{^{(}->}[r] & \Gamma'_{ind} \ar@{->>}[r] \ar@{->>}[u]  & 
\tau \oplus \tau' \ar@{->>}[u] \\
l \Gamma'_{ind} \ar@{^{(}->}[r] \ar@{=}[u] & \Gamma' \ar@{^{(}-->}[u] \ar@{-->}[r] &
\tau'. \ar@{^{(}->}[u]
}$$
We can then repeat the argument with some new $(\Gamma,\Gamma')$
instead of $(\Gamma_{ind},\Gamma'_{ind})$ to cover all the cases.
\end{proof}

In particular for the lattice $\Gamma=\Gamma^{gen}$ such that 
modulo $\varpi_L$, it has an irreducible generic socle, we obtain
$\Gamma^{gen,'}$ those modulo $\varpi_L$ reduction also has an 
irreducible generic socle. Let denote by $\Gamma^{gen,st}$ the lattice of
$\st_{r-1}(\pi_{v,0}) \times \st_{s_\varrho-r+1}(\pi_{v,0})$
given by $\Gamma^{gen,'}$ through the injective map (\ref{eq-st-quotient}).

By local Langlands correspondence to
$\st_{r-1}(\pi_{v,0}) \times \st_{l_1} (\pi_{v,1}) \times \cdots \times
\st_{l_k}(\pi_{v,k})$
is associated an inertial type and, cf.notation \ref{nota-SZ}, an $K_v$-type
$\sigma_{L}$ obtained by induction from the $K_v$-types 
of the Steinberg representations: note that $\sigma_L$ has multiplicity one in this
space. Replacing the $\pi_{v,i}$ by $\pi_{v,0}$ in
the construction of $\sigma_L$ we obtain a subspace $\sigma'_L$ of
$\st_{r-1}(\pi_{v,0}) \times \st_{l_1} (\pi_{v,0} \{ \delta_1/2 \})
\times \cdots \times \st_{l_k}(\pi_{v,0} \{ \delta_k/2 \})$.  
By construction in notation \ref{nota-SZ}, the $K_v$-type
$\sigma_{st,L}$ of $\st_{r-1}(\pi_{v,0}) \times \st_{s_\varrho-r+1}(\pi_{v,0})$
is a subspace of $\sigma'_L$: note that
$\sigma_{st_L}$ has multiplicity one in this space.

For a lattice $\Gamma$ of 
$\st_{r-1}(\pi_{v,0}) \times \st_{l_1} (\pi_{v,1}) \times \cdots \times 
\st_{l_k}(\pi_{v,k})$
we then obtain a lattice $\sigma$ of $\sigma_{L}$ and, using the
previous lemma, a lattice $\sigma'$ of $\sigma'_L$: note that
$\overline \sigma':=\sigma'/\varpi_L \sigma' \simeq 
\sigma/\varpi_L \sigma=:\overline \sigma$ 
as a $\Fm_L$-representation of 
$K_v=\GL_d(\OC_v)$. We also obtained a lattice $\sigma_{st}$ of
$\sigma_{st,L}$ with 
$$\overline \sigma_{st}:=\sigma_{st}/\varpi_L \sigma_{st} \hookrightarrow
\overline \sigma' \simeq \overline \sigma.$$
Starting with $\Gamma^{gen}$ we denote the associated lattices by
$\sigma^{gen}$, $\sigma^{gen,'}$ and 
$\sigma_{st}^{gen}$.
Consider a stable lattice $\Gamma_{st,+}$ of $\Gamma_{st,L}$ such that
$$\Gamma_{st}^{gen} \subsetneq 
\Gamma_{st,+} \subsetneq \varpi_L^{-1} \Gamma_{st}^{gen}.$$
Note that the cokernel $\Gamma_{st,+}/\Gamma_{st}^{gen}$ contains
the maximal $K_v$-type $\overline \sigma_{\max}$ of $\st_{s_\varrho}(\varrho) \times \overline \psi$ for some $\overline \psi$ whose supercuspidal support does not
contain $\varrho$. We then deduce that the cokernel of 
$\sigma_{st}^{gen} \hookrightarrow \sigma_{st,+}$ is non zero as it contains
at least $\overline \sigma_{\max}$.
We then construct the lattice $\Gamma'_{+}$ by pushout
$$\xymatrix{
\Gamma_{st}^{gen} \ar@{^{(}->}[r] \ar@{^{(}->}[d] & \Gamma_{st,+} 
\ar@{^{(}-->}[d] \\
\Gamma^{gen,'} \ar@{^{(}-->}[r] & \Gamma'_+,
}$$
giving also rise to a lattice $\Gamma_+$. We then obtained lattices
$\sigma_{st,+}$, $\sigma_+'$ and $\sigma_{+}$. With the notations of
\S \ref{para-lattice},
consider then the following commutative diagram
\begin{equation} \label{diag1}
\xymatrix{
& \overline M_{K^v}(\overline \sigma^v \overline \sigma_{st}^{gen}) \ar[r] 
\ar@{^{(}->}[d] & 
\overline M_{K^v}(\overline \sigma^v \overline \sigma_{st,+}) \ar@{^{(}->}[d] \\
\overline M_{K^v}(\overline \sigma^v \overline \sigma^{gen,'}) \ar@{=}[r] &
\overline M_{K^v}(\overline \sigma^v \overline \sigma^{gen,'}) \ar[r] & 
\overline M_{K^v}(\overline \sigma^v \overline \sigma_{+}') \ar@{=}[r] &
\overline M_{K^v}(\overline \sigma^v \overline \sigma_{+})
}
\end{equation}
As explained in the proof of proposition \ref{prop-BLC}, the case of
$\underline s_\varrho^0$ gives us that
the bottom horizontal map is zero. The top one
is either zero or an isomorphism but the commutative diagram imposes it
is also zero which gives us that, cf. the proof of \ref{prop-BLC} using the multiplicity
one property, $\Gamma_{st,+}$ can not be a subspace
lattice for some $\Pi \in \AC_{K^v,\mathfrak m}(\varrho,(r-1,s_\varrho-r-1))$.
Finally we are able to eliminate stable lattice on the right of $\Gamma^{gen}_{st}$
which are not isomorphic to it.

To deal with lattices $\Gamma_{st,-}$ contained in $\Gamma_{st}^{gen}$
$$\varpi_L \Gamma_{st}^{gen} \subsetneq
\Gamma_{st,-} \subsetneq \Gamma_{st}^{gen},$$
i.e. to go on the left, we repeat the same arguments but now playing with
$$\Pi_{v,-} \simeq \st_{r-1}(\pi_{v,0}) \times \st_{l_1} (\pi_{v,0} \{ \delta_1/2 \})
\times \cdots \st_{l_k}(\pi_{v,0} \{ \delta_k/2 \}) \times \psi$$
where $\delta_1=r-s_\varrho$, $\delta_2=\delta_1+2l_1$... and
$\delta_k=\delta_1+2l_1+\cdots +2l_{k-1}=s_\varrho-r$ so that  
\begin{equation} \label{eq-st-quotient2}
\st_{l_1} (\pi_{v,0} \{ \delta_1/2 \}) \times \cdots \times
\st_{l_k}(\pi_{v,k} \{ \delta_k/2 \}) \twoheadrightarrow \st_{s_\varrho-r+1}(\pi_{v,0}).
\end{equation}
We construct $\Gamma'_-$ by pullback
$$\xymatrix{
\Gamma_{st,-} \ar@{^{(}->}[r] & \Gamma_{st}^{gen} \\
\Gamma'_- \ar@{-->>}[u] \ar@{^{(}-->}[r] & \Gamma^{gen,'}. \ar@{->>}[u]
}$$
With similar notations as above, we have a commutative diagram
\begin{equation}\label{diag2}
\xymatrix{
& \overline M_{K^v}(\overline \sigma^v \overline \sigma_{st,-}) \ar[r] & 
\overline M_{K^v}(\overline \sigma^v \overline \sigma_{st}^{gen}) \\
\overline M_{K^v}(\overline \sigma^v \overline \sigma_-) \ar@{=}[r] &
\overline M_{K^v}(\overline \sigma^v \overline \sigma_-') \ar[r] \ar@{->>}[u] & 
\overline M_{K^v}(\overline \sigma^v \overline \sigma^{gen,'}), \ar@{->>}[u]
\ar@{=}[r] & \overline M_\oo(\overline \sigma^v \overline \sigma^{gen})
}
\end{equation}
where the horizontal maps are either zero or an isomorphism which then allows
us to point an unique possible stable lattice for elements of
$\AC_{K^v,\mathfrak m}(\varrho,(r-1,s_\varrho-r-1))$.

\rem Note that the lattices $\Gamma_{st}^{gen}$ constructed by pushout and
pullback are not necessary the same despite what the notation suggests.

\smallskip

\textit{(ii) From $(r-1,s_\varrho-r+1)$ to any $\underline s_\varrho$} \\
We repeat the same arguments but starting now from lattices
$$\varpi_L \Gamma^{gen,'} \subsetneq
\Gamma'_- \subsetneq \Gamma^{gen,'} \subsetneq \Gamma'_+ \subsetneq
\varpi_L^{-1} \Gamma^{gen,'}.$$
Using pullback and pushout, we then construct lattices $\Gamma_{st,\pm}$
and we conclude through the commutative diagrams (\ref{diag1}) and (\ref{diag2}).
\medskip

\noindent \textit{Step 4: Ihara's lemma for $\AC_{K^v,\mathfrak m}(\varrho,
(r-1 \geq \cdots ))$} \\
The arguments are exactly the same as in the case $s_\varrho=3$. Precisely
consider as before an automorphic filtration of 
$H^0(\sh_{K^v(\oo),\bar s_v},\overline \Zm_l)_{\mathfrak m}$ such that the graded
parts correspond to some automorphic representation. Consider such a
graded part associated to 
$\Pi \in \AC_{K^v,\mathfrak m}(\varrho,\underline s_\varrho)$ where
$\underline s_\varrho=(r-1 \geq \cdots)$ is a partition of $s_\varrho$
starting with $r-1$. Let denote by $\Gamma(\Pi)$ the associated lattice and 
suppose that its modulo $l$ reduction has a non $\varrho$-generic subspace 
$\tau$ which is
a subspace of $H^0(\sh_{K^v(\oo),\bar s_v},\overline \Fm_l)_{\mathfrak m}$:
we want to prove it is absurd. Let
$\Gamma(\Pi)^{gen}$ be the subspace lattice associated to $\Pi$ so that
$$0 \rightarrow \Gamma(\Pi)^{gen} \longrightarrow \Gamma(\Pi) \longrightarrow T
\rightarrow 0,$$
where $T$ is $l$-torsion. The modulo $l$ reduction of $\Gamma(\Pi)$ is such that
$$0 \rightarrow W \longrightarrow \Gamma(\Pi) \otimes_{\overline \Zm_l}
\overline \Fm_l \longrightarrow T \rightarrow 0,$$
where the socle of $T$ is generic and where $W$ is the image of
$$\Gamma(\Pi)^{gen} \otimes_{\overline \Zm_l} \overline \Fm_l 
\longrightarrow \Gamma(\Pi) \otimes_{\overline \Zm_l} \overline \Fm_l.$$ 
and non zero as we have seen that the modulo $l$
We then remark that $\tau$ is necessary a subspace of $W$ but as
$\tau$ is not a subspace of 
$\Gamma(\Pi)^{gen} \otimes_{\overline \Zm_l} \overline \Fm_l$
it cannot be a subspace of 
$H^0(\sh_{K^v(\oo),\bar s_v},\overline \Fm_l)_{\mathfrak m}$.

\medskip

\noindent \textit{Step 5: Final exchanges to conclude the induction} \\
We then exchange every contribution coming from
$\AC_{K^v,\mathfrak m}(\varrho,\underline s)$ for any
$\underline s=(r \geq \cdots)$ until we arrive at the filtration for $r-1$.
We then just have to check the hypothesis on the modulo $l$
monodromy operator.  Before all the exchanges we know that for every 
$0 \leq t \leq r-2$:
$$\overline N: \gr_\varrho^{-\frac{r(r+1)}{2}+(1+2+\cdots +t - (t-1))}(r) 
\longrightarrow 
\gr_\varrho^{\frac{-r(r+1)}{2}+(1+2+\cdots+(t-1)-(t-1))}(r),$$
is surjective relatively to all the $(\varrho,r)$-small subquotients and also for
the $(\varrho,r-1)$-small ones. After we have to check that
\begin{equation} \label{eq-ngr}
\overline N: \gr_\varrho^{-\frac{r(r+1)}{2}+(1+2+\cdots+t-(t-1))}(r-1) 
\longrightarrow \gr_\varrho^{-\frac{r(r+1)}{2}+(1+2+\cdots+(t-1)-(t-1))}(r-1),
\end{equation}
remains surjective relatively to all the $(\varrho,r-1)$-small subquotients.
By construction we have
$$\xymatrix{
\bigoplus_{\underline s_\varrho \leq (r-1)}
\bigoplus_{\Pi \in \AC_{K^v,\mathfrak m}(\varrho,\underline s_\varrho)} 
(\Pi^\oo)^{K^v} \otimes L(\Pi_v)(...) \ar@{^{(}->}[r] \ar@{^{(}->}[dr] & 
\gr_\varrho^{-\frac{r(r+1)}{2}+(1+2+\cdots + t-(t-1))}(r) \otimes_{\overline \Zm_l} \overline \Qm_l \\
& \gr_\varrho^{-\frac{r(r+1)}{2}+(1+2+\cdots + (t-1)-(t-1))}(r) 
\otimes_{\overline \Zm_l} \overline \Qm_l
}$$
and if $\Gamma_1$ (resp. $\Gamma_2$) 
is the lattice induced on this subspace by
$\gr_\varrho^{-\frac{r(r+1)}{2}+(1+2+\cdots +t-(t-1))}(r)$
(resp. by $\gr_\varrho^{-\frac{r(r+1)}{2}+(1+2+\cdots+(t-1)-(t-1))}(r)$), then we have
$$0 \rightarrow \gr_\varrho^{-\frac{r(r+1)}{2}+(1+2+\cdots+(t-1)-(t-2))}(r-1) \longrightarrow 
\Gamma_1 \longrightarrow T_1 \rightarrow 0,$$
$$0 \rightarrow \gr_\varrho^{-\frac{r(r+1)}{2}+(1+2+\cdots+(t-2)-(t-2))}(r-1) \longrightarrow  
\Gamma_2\longrightarrow T_2 \rightarrow 0,$$
where none of the irreducible constituant of $T_1$ and $T_2$ 
are $(\varrho,r-1)$-small
in the sense of definition \ref{defi-rhotsmall}.
By the induction hypothesis, the monodromy operator induces 
$\Gamma_1 \longrightarrow \Gamma_2$ such that, after tensoring with
$\overline \Fm_l$, the cokernel does not have any $(\varrho,r-1)$-subquotients.
We then deduce that the induce map
$$N:\gr_\varrho^{-\frac{r(r+1)}{2}+(1+2+\cdots+(t-1)-(t-2))}(r-1) \longrightarrow
\gr_\varrho^{-\frac{r(r+1)}{2}+(1+2+\cdots+(t-2)-(t-2))}(r-1)$$
is such that the cokernel of its modulo $l$ reduction does not have
any $(\varrho,r-1)$-subquotients.

\subsection{Genericity for KHT-Shimura varieties} 

Consider an irreducible subspace
$\tau$ of $H^0(\sh_{K^v(\oo),\bar s_v},\Psi)_{\mathfrak m}$. To prove that
$\tau$ is generic, we are led to prove it is $\varrho$-generic for every $\varrho$
in its supercuspidal support. Recall that 
$$H^0(\sh_{K^v(\oo),\bar s_v},\Psi)_{\mathfrak m} \simeq \bigoplus_{\varrho \in \cusp_{\overline \Fm_l}} H^0(\sh_{K^v(\oo),\bar s_v},\Psi_\varrho)_{\mathfrak m}$$
and, from the typicity property, for $\varrho$ belonging to the supercuspidal
support of $\tau$, $\tau$ is also a subspace of
$H^0(\sh_{K^v(\oo),\bar s_v},\Psi_\varrho)_{\mathfrak m}$.
From step 4 of the previous section, 
we know that $\tau$ has to be $\varrho$-generic.

\subsection{Breuil's lattice conjecture for $l \neq p$}

Consider an inertial type $\tau_v$ and its associated $K_v$-type 
$\sigma_L(\tau_v)$. Consider also
a system of Hecke eigenvalues $\lambda: \Tm_m \longrightarrow  \Zm_L$ associated to
some automorphic representation $\Pi$ which appears in middle cohomology group
of $\Sh_{K^v(\oo),\bar \eta_n}$ with coefficients in $\LC^\vee_{\sigma_0(v)}$ for
$\sigma_0(v)=\sigma^v \sigma_v$ where $\sigma^v$ is a continuous finitely generated
representation of $K^v$. When $\sigma_L(\tau_v)$ appears with multiplicty one in $\Pi_v$,
we can define
$$\sigma_\lambda(\tau_v):=M_{K^v}(\sigma^v)^*[\lambda] \cap \sigma_L(\tau_v),$$
which is a stable lattice of $\sigma_L(\tau_v)$: from theorem \ref{thm:ILL} (iii), we can apply
this construction to the maximal inertia type.
Proposition \ref{prop-BLC} tells us that this lattice
only depends on the modulo $l$ reduction of $\lambda$.
One possible translation of 
Breuil's lattice conjecture to our situation could be the following.

\begin{prop} \label{prop-BLC2}
The lattice $\sigma_\lambda(\tau_v)$ depends only on the local datum $\Pi_v$.
\end{prop}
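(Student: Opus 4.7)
The plan is to reduce this proposition to the uniqueness characterization of the subspace lattice $\Gamma(\Pi)$ of $\Pi_v$ induced by cohomology, which is precisely the content that the entire Section 4.3 establishes as the proof of Ihara's lemma.

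First, I would identify the lattice $M_{K^v}(\sigma^v)^*[\lambda]$ inside $\Pi_v$ with the subspace lattice $\Gamma(\Pi)$ coming from the embedding
\[
(\Pi^{\infty,v})^{K^v}\otimes \Pi_v \otimes \sigma(\Pi) \hookrightarrow H^{d-1}(\sh_{K^v(\infty),\bar\eta_v},\overline\Zm_l)_{\mathfrak m}\otimes_{\overline\Zm_l}\overline\Qm_l.
\]
This identification uses the typicity isomorphism of Proposition \ref{prop-typic}, which provides the factorization $H^{d-1}(\sh_{K,\bar\eta},\overline\Zm_l)_{\mathfrak m}\simeq \sigma_{\mathfrak m,K}\otimes_{\Tm(K)_{\mathfrak m}}\rho_{\mathfrak m,K}$, so that picking out the $\lambda$-isotypic part recovers exactly the subspace lattice on $\Pi_v$.

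The core of the argument is then to invoke the output of the inductive proof running through Step 1--Step 4 of \S\ref{para-generalcase}: whatever the global $\Pi$, once $\Pi_v\simeq \st_{l_1}(\pi_{v,1})\times\cdots\times\st_{l_k}(\pi_{v,k})\times\psi$ (with $\varrho$ not in the supercuspidal support of the mod $l$ reduction of $\psi$), the subspace lattice $\Gamma(\Pi)$ coincides with the lattice $\Gamma(\Pi)^{gen}$ of \cite{EGS} Lemma 4.1.1, namely the unique (up to homothety) stable $\overline\Zm_l$-lattice of $\Pi_v$ whose modulo $l$ reduction has an irreducible generic socle. This characterization is purely local at $v$: two automorphic representations $\Pi,\Pi'$ with $\Pi_v\simeq\Pi'_v$ must yield the same lattice, since both are characterized by the same intrinsic property of $\Pi_v$.

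Finally, since the $K_v$-type $\sigma_L(\tau_v)$ is itself determined by $\Pi_v$ through the inertial Langlands correspondence (via notation \ref{nota-SZ}), and $\sigma_\lambda(\tau_v)=\sigma_L(\tau_v)\cap M_{K^v}(\sigma^v)^*[\lambda]=\sigma_L(\tau_v)\cap\Gamma(\Pi)^{gen}$, the intersection lattice depends only on $\Pi_v$, proving the proposition. The hard part is of course not the formal chain of reductions above but the inductive construction carried out in \S\ref{para-generalcase}, where one must show step by step that no modified lattice other than $\Gamma(\Pi)^{gen}$ can arise as a subspace lattice in the cohomology; the role of Proposition \ref{prop-BLC} is precisely to pin down this lattice using only the mod $l$ reduction of $\lambda$, and the refinement to ``depends only on $\Pi_v$'' is the intrinsic reformulation of that characterization.
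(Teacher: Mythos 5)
Your argument is correct and is essentially the paper's own proof: both reduce the statement to the fact, established in the proof of Ihara's lemma in \S\ref{para-generalcase}, that the subspace lattice of $\Pi_v$ cut out by the integral cohomology is the unique (up to homothety) stable lattice $\Gamma(\Pi)^{gen}$ whose modulo $l$ reduction has an irreducible generic socle, a characterization intrinsic to $\Pi_v$ and hence independent of the globalization. The extra framing you supply (the typicity isomorphism to identify the Hecke eigenspace with the subspace lattice, and the remark that $\sigma_L(\tau_v)$ itself is determined by $\Pi_v$) is implicit in the paper but does not change the route.
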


\begin{proof}
Consider $\Pi_1$ and $\Pi_2$ associated to two systems of Hecke eigenvalues
$\lambda_1$ and $\lambda_2$ as above: we moreover suppose that $\Pi_{v,1} \simeq \Pi_{v,2}$. 
From Ihara's lemma
we know that the lattice of $\Pi_{v,1}$ is the one such that the socle of its modulo
$l$ reduction is irreducible and generic: this lattice is then isomorphic to those
of $\Pi_{v,2}$ which proves the statement.
\end{proof}

However note that the previous proposition is not so interesting as there are very few stable lattices
compare to the case where $l=p$.

\bibliography{Biblio}
\bibliographystyle{amsalpha}

\end{document}